\newcommand{\add}[1]{\textcolor{blue}{#1}}
\theoremstyle{definition}
\newtheorem{axiom}{Axiom}
\newtheorem{myth}{Myth}
\newtheorem{fact}{Fact}
\newtheorem{definition}{Definition}[section]
\newtheorem{remark}[definition]{Remark}
\newtheorem{example}[definition]{Example}
\newtheorem{examples}[definition]{Examples}
\theoremstyle{plain}
\newtheorem{theorem}[definition]{Theorem}
\newtheorem{lemma}[definition]{Lemma}
\newtheorem{corollary}[definition]{Corollary}
\newcommand{\bra}{\ensuremath{\big\langle}}
\newcommand{\ket}{\ensuremath{\big\rangle}}
\newcommand{\supp}{\ensuremath{{\rm{supp}}}}
\newcommand{\card}{\ensuremath{{\rm{card}}}}
\newcommand{\st}{\ensuremath{{{\rm{st}}}}}
\newcommand{\rest}{\!\!\ensuremath{\upharpoonright}\!}
\newcommand{\ub}{\mathcal{UB}}
\newcommand{\cof}{\textrm{cof}}
\newlist{T-enum}{enumerate}{2}
\newlist{L-enum}{enumerate}{2}
\newlist{C-enum}{enumerate}{2}
\newlist{P-enum}{enumerate}{2}  
\newlist{Pf-enum}{enumerate}{2} 
\newlist{D-enum}{enumerate}{2}
\newlist{Ex-enum}{enumerate}{2}
\newlist{Exs-enum}{enumerate}{2}
\newlist{E-enum}{enumerate}{2}
\newlist{R-enum}{enumerate}{2}
\setlist[T-enum,1]{label=(\roman*),format=\bfseries\emph,leftmargin=*,labelindent=.1\parindent}
\setlist[T-enum,2]{label=(\alph*),format=\bfseries\emph,leftmargin=*,labelindent=.1\parindent}
\setlist[L-enum,1]{label=(\roman*),format=\bfseries\emph,leftmargin=*,labelindent=.1\parindent}
\setlist[L-enum,2]{label=(\alph*),format=\bfseries\emph,leftmargin=*,labelindent=.1\parindent}
\setlist[C-enum,1]{label=(\roman*),format=\bfseries\emph,leftmargin=*,labelindent=.1\parindent}
\setlist[C-enum,2]{label=(\alph*),format=\bfseries\emph,leftmargin=*,labelindent=.1\parindent}
\setlist[P-enum,1]{label=(\roman*),format=\bfseries\emph,leftmargin=*,labelindent=.1\parindent}
\setlist[P-enum,2]{label=(\alph*),format=\bfseries\emph,leftmargin=*,labelindent=.1\parindent}
\setlist[Pf-enum,1]{label=(\roman*), leftmargin=*,labelindent=.1\parindent}
\setlist[Pf-enum,2]{label=(\alph*), leftmargin=*,labelindent=.1\parindent}
\setlist[D-enum,1]{label=\textbf{\arabic*.},leftmargin=*,labelindent=.2\parindent}
\setlist[D-enum,2]{label=\textbf{(\alph*)},leftmargin=*,labelindent=.1\parindent}
\setlist[Ex-enum,1]{label=\textbf{\arabic*.},leftmargin=*,labelindent=.15\parindent}
\setlist[Ex-enum,2]{label=\textbf{\alph*.},leftmargin=*,labelindent=.15\parindent}
\setlist[Exs-enum,1]{label=\textbf{\arabic*.},leftmargin=*,labelindent=.15\parindent}
\setlist[Exs-enum,2]{label=\textbf{\alph*.},leftmargin=*,labelindent=.15\parindent}
\setlist[E-enum,1]{label=\textbf{\arabic*.},leftmargin=*,labelindent=.15\parindent}
\setlist[E-enum,2]{label=\textbf{\alph*.},leftmargin=*,labelindent=.15\parindent}
\setlist[R-enum,1]{label=\textbf{\arabic*.},leftmargin=*,labelindent=.15\parindent}
\setlist[R-enum,2]{label=\textbf{\alph*.},leftmargin=*,labelindent=.15\parindent}
\begin{document}
\vspace{0.5cm}

\title{Ordered Fields, the Purge of Infinitesimals from Mathematics and the Rigorousness of Infinitesimal Calculus}
\author{James F. Hall*\, and\, Todor D. Todorov**} 
 \address{Mathematics Department\\                
                        California Polytechnic State University\\
                        San Luis Obispo, California 93407, USA.}									\email{james@slohall.com \& ttodorov@calpoly.edu}

\thanks{*James F. Hall received his Master Degree in Mathematics from California Polytechnic State University at San Luis Obispo, in 2013. He presently works for \emph{Amazon Corporation} as a Software Development Engineer.}
\thanks{** Todor D. Todorov is a Professor in Mathematics at Cal Poly. He received his Ph.D. degree from University of Sofia \& Bulgarian Academy of Sciences in 1982 on the \emph{Problem of Multiplication of Schwartz Distributions} under the advisement of Academician Christo Ya. Christov. He is presently a Professor in Mathematics at California Polytechnic State University, San Luis Obispo, CA-93407, USA. Todor Todorov is particularly grateful to Professor Christov for introducing him to the theory of generalized functions - through infinitesimals - in an era when infinitesimals had already been expelled from mathematics and almost forgotten}
\thanks{***For a glimpse of Academician Professor Christo Ya. Christov mathematics and physics heritage, we refer to Mathematics  Genealogy  Project  (http://www.genealogy.math.ndsu.nodak.edu/id.php?id=\newline 177888)}.
\keywords{Ordered field, complete field, infinitesimals, infinitesimal calculus, non-standard analysis, valuation field, power series, Hahn field, transfer principle}
\subjclass{Primary: 03H05; Secondary: 01A50, 03C20, 12J10, 12J15, 12J25, 26A06}
\date{April 2015}
\maketitle

Dedicated to the memory of Christo Ya. Christov on the 100-th anniversary of his birth.***
\begin{abstract} We present a characterization of the completeness of the field of real numbers in the form of a \emph{collection of several equivalent statements} borrowed from algebra, real analysis, general topology, and non-standard analysis. We also discuss the completeness of non-Archimedean fields and present several examples of such fields. As an application, we exploit the characterization of the reals to argue that the Leibniz-Euler infinitesimal calculus in the $17^\textrm{th}$-$18^\textrm{th}$ centuries was already a rigorous branch of mathematics -- at least much more rigorous than most contemporary mathematicians prefer to believe. By advocating our particular historical point of view, we hope to provoke a discussion on the importance of mathematical rigor in mathematics and science in general. This article is directed to all mathematicians and scientists who are interested in the foundations of mathematics and the history of infinitesimal calculus.
\end{abstract}
\section{Introduction} 

	This article grew from a senior project of the first author (Hall~\cite{jHall11}), and a tutorial, written by the second author, used to introduce students in mathematics to non-standard analysis. The final version of this article, presented here, is directed towards professional mathematicians of all areas and levels of mathematical achievement (Fields medalists are not excluded), who have interest in the foundations of mathematics and the history of infinitesimal calculus. Our goal is to lay down the necessary mathematical background on ordered fields, which is necessary to discuss both the history of infinitesimal calculus and its connection to non-standard analysis. For some readers this text will be a helpful addition to the recent excellent book on the history of infinitesimals by Amir Alexander~\cite{aAlexander2014}.

	Our experience in discussing topics involving \emph{infinitesimals} has thus far indicated that modern mathematicians, unless specializing in non-standard analysis or model theory of fields, often do not have experience with ordered fields other than the subfields of $\mathbb R$.  Even the familiar field of rational functions $\mathbb R(t)$, which appears in any textbook on algebra, is rarely supplied with an ordering and presented as a simple example of a field with infinitesimals. We are trying to avoid the situation, in which after long and passionate discussions on the role of non-standard analysis in modern mathematics and its connection with the history of infinitesimal calculus, we realize that some participants of the discussion do not have any  rigorous definition of \emph{infinitesimal} in hand and do not have any example in mind for an ordered \emph{non-Archimedean field}. Notice that the finite fields $\mathbb Z_p$ and the the field of $p$-adic numbers $\mathbb Q_p$ are relatively popular, but neither of them is orderable.

	In Section~\ref{S: Preliminaries: Ordered Rings and Fields}, we recall the basic definitions and results about totally ordered fields -- not necessarily Archimedean. The content of this section, although algebraic and elementary in nature, is very rarely part of a standard mathematical education. In Section~\ref{S: Completeness of an Ordered Field}, we present a long list of definitions of different forms of completeness of an ordered field.  These definitions are not new, but they are usually spread throughout the literature of various branches of mathematics and presented at different levels of accessibility. In Section~\ref{S: Completeness of an Archimedean Field}, we present a characterization of the completeness of an Archimedean field -- that is to say, a characterization of the completeness of the reals. This characterization is in the form of a collection of ten equivalent statements borrowed from algebra, real analysis, general topology and non-standard analysis (Theorem~\ref{T: Completeness of an Archimedean Field}). Some parts of this \emph{collection} are well-known and can be found in the literature. We believe however, that this is the first time that the whole collection has appeared together.  In Section~\ref{S: Completeness of a Non-Archimedean Field}, we establish some basic results about the completeness of non-Archimedean fields which cannot be found in a typical textbook on algebra or analysis. In Section~\ref{S: Infinitesimals in Algebra and Non-Standard Analysis}, we present numerous examples of non-Archimedean fields -- both from algebra and non-standard analysis. The main purpose of the first six sections of our article is to emphasize the essential difference in the completeness of Archimedean and non-Archimedean fields and to prepare the reader for the following discussion on the history of calculus.

   In Section~\ref{S: The Purge of Infinitesimals from Mathematics}, we offer a short survey of the history of infinitesimal calculus, written in a \emph{polemic-like style}; our goal is to provoke a discussion on the topic, and we expect\add{,} and invite, {differing points of view}. One of the characterizations of the completeness of an Archimedean field presented in Theorem~\ref{T: Completeness of an Archimedean Field} (due originally to Keisler~\cite{jKeislerF}) is formulated in terms of infinitesimals and thus has a strong analog in the infinitesimal calculus of the $17^\textrm{th}$-$18^\textrm{th}$. We exploit this fact to re-evaluate ``with fresh eyes'' the rigorousness of the  $17^\textrm{th}$-$18^\textrm{th}$ centuries' infinitesimal calculus. In Section~\ref{S: How Rigorous Was the Leibniz-Euler Calculus}, we present a new, and perhaps surprising, answer to the question ``How rigorous was the infinitesimal calculus in the $18^\textrm{th}$ century?,'' arguing that the \emph{Leibniz-Euler infinitesimal calculus} was, in fact, much more rigorous than most modern mathematicians prefer to believe. It seems quite surprising that it took more than 200 years to realize how $18^\textrm{th}$ century mathematicians preferred to phrase the \emph{completeness} of the reals. But better late (in this case, very late) than never. We hope that this historical point of view might stir dormant mathematical passions and result in fruitful discussions on the importance and appreciation of mathematical rigor. 


\section{Preliminaries: Ordered Rings and Fields}\label{S: Preliminaries: Ordered Rings and Fields}
In this section we recall the main definitions and properties of totally ordered rings and fields (or simply, \emph{ordered rings and fields} for short), which are not necessarily Archimedean. We shortly discuss the properties of  infinitesimal, finite and infinitely large elements of a totally ordered field. For more details, and for the missing proofs, we refer the reader to (Lang~\cite{lang}, Chapter XI),  (van der Waerden~\cite{VanDerWaerden}, Chapter 11), (Hungerford~\cite{tHungerford}), (Ribenboim~\cite{riben}) and Dales \& Woodin~\cite{DalWoodin}.

\begin{definition}[Ordered Rings]\label{D: Totally Ordered Rings} Let $\mathbb{K}$ be a ring.  Then:
   \begin{D-enum}
\item $\mathbb K$ is \emph{orderable} if there exists a non-empty proper subset $\mathbb K_+$ of $\mathbb K$ such that (a) $0\notin\mathbb K_+$. (b) $\mathbb K_+$ is closed under the addition and multiplication in $\mathbb K$. (c) For every $x\in\mathbb K$ either $x=0$, $x\in\mathbb K_+$, or $-x\in\mathbb K_+$. We define an \emph{order relation} $<$ on $\mathbb K$ by $x<y$ if $y-x\in\mathbb K_+$. We refer to $(\mathbb{K}, +, \cdot,  <)$, denoted for short by $\mathbb{K}$, as an \emph{ordered ring}.

\item Let $a, b\in\mathbb{K}$ and $a\leq b$. We let $(a, b)=:\{x\in\mathbb{K}: a< x< b\}$ and 
$[a, b]=:\{x\in\mathbb{K}: a\leq x\leq b\}$.  A totally ordered ring $\mathbb{K}$  will be always supplied with the order topology, with the open intervals as basic open sets.
   \item  If $x\in\mathbb{K}$, we define the \emph{absolute value} of $x$ by $|x|=:\max(-x,x)$. If $A\subset\mathbb K$, we define the \emph{set of upper bounds} of $A$ by $\ub(A)=:\{x\in\mathbb K : (\forall a\in A)(a\le x)\}$. We denote by $\sup_\mathbb{K}(A)$ or, simply by $\sup(A)$, the \emph{least upper bound} of $A$ (if it exists).

   \item The \emph{cofinality} ${\rm cof}(\mathbb K)$ of $\mathbb K$ is the cardinality of the smallest unbounded subset of $\mathbb K$.

   \item Let $(I, \succcurlyeq)$ be a directed set (Kelley~\cite{jKelley}, Chapter 2)). A net $f: I\to\mathbb K$ is called  \emph{fundamental or Cauchy} if for every $\varepsilon\in\mathbb K_+$ there exists $h\in I$ such that $|f(i)-f(j)|<\varepsilon$ for all $i, j\in I$ such that $i, j\succcurlyeq h$.

   \item A totally ordered ring $\mathbb{K}$ is called \emph{Archimedean} if for every $x\in\mathbb{K}$, there exists $n\in\mathbb{N}$ such that $|x|\leq n$. 
   \end{D-enum}
\end{definition}

Notice that the field of the complex numbers $\mathbb C$, the finite fields $\mathbb Z_p$ and the field $\mathbb Q_p$ of real $p$-adic numbers ($p$ is a prime number) are non-orderable. Actually, a ring $\mathbb K$ is orderable if and only if $\mathbb K$ is \emph{formally real} in the sense that for every $n\in\mathbb N$ the equation $\sum_{k=1}^n x_k^2=0$ in $\mathbb K^n$ admits only the trivial solution $x_1=\dots=x_n=0$.

	Recall that there is a canonical embedding of $\mathbb Q$ into any ordered field $\mathbb K$ given by the unique extension of the map with $1_\mathbb Q\mapsto 1_\mathbb K$ to an isomorphism. This embedding is essential for the following definitions.

\begin{definition}[Infinitesimals, etc.]\label{D: Infinitesimals, etc.}
   Let $\mathbb{K}$ be a totally ordered field. We define the sets of \emph{infinitesimal}, \emph{finite}, and \emph{infinitely large} elements (respectively) by
   \begin{align*}
   \mathcal{I}(\mathbb{K})&=:\{x\in\mathbb{K} : |x|< 1/n \text{ for all } n\in\mathbb N\},\\
      \mathcal{F}(\mathbb{K})&=:\{x \in\mathbb{K} : |x|\le n  \text{ for some } n\in\mathbb N\},\\
      \mathcal{L}(\mathbb{K})&=:\{x \in\mathbb{K} : n<|x|  \text{ for all } n\in\mathbb N\}.
   \end{align*}

   We sometimes write $x\approx0$ if $x\in\mathcal I(\mathbb K)$ and $x\approx y$ if $x-y\approx 0$, in which case we say that $x$ is \emph{infinitesimally close} to $y$. If $S\subseteq\mathbb K$, we define the \emph{monad} of $S$ in $\mathbb K$ by $ \mu(S)=\{s+ds : s\in S,\, ds\in\mathcal I(\mathbb K)\}.$
\end{definition}

Following directly from the definitions above, we have a characterization of Archimedean rings in terms of infinitesimal and infinitely large quantities.

\begin{theorem}[Archimedean Property]\label{T: Archimedean Property}
  Let $\mathbb{K}$ be a totally ordered ring. Then the following are equivalent: (i) $\mathbb{K}$ is Archimedean. (ii) $\mathcal{F}(\mathbb{K})=\mathbb{K}$. (iii) $\mathcal L(\mathbb K)=\varnothing$. If $\mathbb{K}$ is a field, then each of the above is also equivalent to $\mathcal I(\mathbb K)=\{0\}$.  
\end{theorem}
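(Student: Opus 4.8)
The plan is to prove the chain of equivalences (i) $\Leftrightarrow$ (ii) $\Leftrightarrow$ (iii) by simply unwinding Definitions~\ref{D: Totally Ordered Rings} and~\ref{D: Infinitesimals, etc.}, and then to handle the additional field statement separately, using the existence of multiplicative inverses.

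First, for (i) $\Leftrightarrow$ (ii): by Definition~\ref{D: Totally Ordered Rings}, $\mathbb{K}$ is Archimedean precisely when every $x\in\mathbb{K}$ satisfies $|x|\le n$ for some $n\in\mathbb{N}$, which by Definition~\ref{D: Infinitesimals, etc.} says exactly that $x\in\mathcal{F}(\mathbb{K})$; since $\mathcal{F}(\mathbb{K})\subseteq\mathbb{K}$ holds trivially, this is the same as $\mathcal{F}(\mathbb{K})=\mathbb{K}$. Next, for (ii) $\Leftrightarrow$ (iii), I would observe that $\mathcal{L}(\mathbb{K})$ is nothing but the set-theoretic complement $\mathbb{K}\setminus\mathcal{F}(\mathbb{K})$: indeed, $x\notin\mathcal{F}(\mathbb{K})$ is the negation of ``$\exists n\in\mathbb{N}:|x|\le n$,'' i.e.\ ``$\forall n\in\mathbb{N}:n<|x|$,'' which is precisely the defining condition of $\mathcal{L}(\mathbb{K})$. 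Hence $\mathcal{F}(\mathbb{K})=\mathbb{K}$ if and only if its complement $\mathcal{L}(\mathbb{K})$ is empty, which closes the loop among (i)--(iii).

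Finally, assume $\mathbb{K}$ is a field; I would prove the single extra equivalence $\mathcal{I}(\mathbb{K})=\{0\}\Leftrightarrow\mathcal{L}(\mathbb{K})=\varnothing$, which, together with the above, suffices. The key point is that $x\mapsto x^{-1}$ furnishes an order-reversing bijection between the positive elements of $\mathcal{L}(\mathbb{K})$ and the positive nonzero elements of $\mathcal{I}(\mathbb{K})$: if $n<|x|$ for every $n\in\mathbb{N}$ then $|x^{-1}|=|x|^{-1}$ is a well-defined positive element with $|x^{-1}|<1/n$ for every $n\in\mathbb{N}$, and conversely; here one uses that $0\notin\mathbb{K}_+$, so that the reciprocal of a nonzero element is defined and has the same sign. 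Thus a nonzero infinitesimal exists exactly when an infinitely large element exists, and since $0\in\mathcal{I}(\mathbb{K})$ always, the claimed equivalence follows. I expect no real obstacle: the argument is a routine exercise in negating quantifiers, the only spot needing a moment's care being the passage $x\in\mathcal{L}(\mathbb{K})\rightsquigarrow x^{-1}\in\mathcal{I}(\mathbb{K})$, which genuinely requires invertibility — this is exactly why the last equivalence is stated only for fields.
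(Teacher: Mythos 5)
Your proposal is correct and is exactly the routine verification the paper has in mind: the paper states this result with the remark that it follows ``directly from the definitions'' and gives no further proof, and your unwinding of the quantifiers for (i)--(iii) together with the observation that $x\mapsto x^{-1}$ exchanges nonzero infinitesimals with infinitely large elements (the one step genuinely requiring the field hypothesis) supplies precisely that argument. No gaps.
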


\begin{remark}  Notice that Archimedean rings (which are not fields) might have non-zero infinitesimals. Indeed,  if $\mathbb K$ is a non-Archimedean field, then $\mathcal F(\mathbb K)$ is always an Archimedean ring (which is not a field), but it has non-zero infinitesimals (see Example~\ref{Ex: Field of Rational Functions} below).

\end{remark}

To help illuminate the use of these definitions, we give an example of a non-Archimedean field achieved by a simple ordering of the well known field of rational functions over $\mathbb Q$. More examples of non-Archimedean fields will appear in Section~\ref{S: Infinitesimals in Algebra and Non-Standard Analysis}.

\begin{example}[Field of Rational Functions]\label{Ex: Field of Rational Functions} Let $\mathbb K$ be an Archimedean ordered field (e.g. $\mathbb K=\mathbb Q$ or $\mathbb K=\mathbb R$). Let $\mathbb K[t]$ denote the ring of polynomials in one variable with coefficients in $\mathbb K$. We endow $\mathbb K[t]$ an order relation by: $f>0$ in $\mathbb K[t]$ if $a_0>0$ in $\mathbb K$, where $a_0$ stands for the coefficient in front the lowest power of $t$ in $f\in\mathbb K[t]$.  Then the field of \emph{rational functions}, $\mathbb K(t)$, is defined either as the \emph{field of fractions} of the ring $\mathbb K[t]$ in the sense of algebra (with the order inherited from $\mathbb K[t]$) or, equivalently, as the \emph{factor set} $\mathbb K(t)=S_{\mathbb K}/\sim$, where $S_{\mathbb K}=:\left\{p(t)/q(t) : p, q\in\mathbb K[t]\emph{ and }q\not=0\right\}$ is the set of the rational functions and $\sim$ is an equivalence relation on $S_{\mathbb K}$ defined by $f\sim g$ if $f(t)=g(t)$ holds in $\mathbb K$ for all sufficiently small $t\in\mathbb Q_+$.  In the latter construction we supply $\mathbb K(t)$ with the following ordering: $f< g$ in $\mathbb{K}(t)$ if $f(t)<g(t)$ holds in $\mathbb K$ for all sufficiently small $t\in\mathbb Q_+$. The field $\mathbb{K}(t)$ is non-Archimedean: $t, t^2, t+t^2$, etc. are positive infinitesimals. Indeed, for every $n\in\mathbb N$ we have $0<t<1/n$ in $\mathbb K$ for all sufficiently small $t\in\mathbb Q_+$. Thus $0<t<1/n$ in 
$\mathbb K(t)$ which means that $t$ is a positive infinitesimal (and similarly for the rest). Also, $1+t, 2+t^2, 3+t+t^2$, etc. are finite, but non-infinitesimal, and $1/t, 1/t^2, 1/(t+t^2)$, etc. are infinitely large elements of  $\mathbb{K}(t)$. Notice that the algebraic definition of $\mathbb{K}(t)$ (as the field of fractions) makes sense even for non-Archimedean fields $\mathbb K$.
\end{example}

\begin{theorem}\label{T: Maximal Ideal}
  Let $\mathbb{K}$ be a totally ordered field. Then $\mathcal{F}(\mathbb{K})$ is an Archimedean ring and $\mathcal{I}(\mathbb{K})$ is a maximal ideal of $\mathcal{F}(\mathbb{K})$. Moreover, $\mathcal{I}(\mathbb{K})$ is a \emph{convex ideal} in the sense that $a\in\mathcal{F}(\mathbb{K})$ and $|a|\le b\in\mathcal{I}(\mathbb{K})$ implies $a\in\mathcal{I}(\mathbb{K})$. Consequently $\mathcal{F}(\mathbb{K})/\mathcal{I}(\mathbb{K})$ is a totally ordered Archimedean field.
\end{theorem}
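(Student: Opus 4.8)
The plan is to establish the four assertions in the order stated, using repeatedly the elementary facts $|x+y|\le|x|+|y|$ and $|xy|=|x|\,|y|$ valid in any totally ordered field, together with Theorem~\ref{T: Archimedean Property}. First I would check closure: if $|a|\le n$ and $|b|\le m$ with $n,m\in\mathbb{N}$, then $|a-b|\le n+m$ and $|ab|\le nm$, so $\mathcal{F}(\mathbb{K})$ is a commutative subring of $\mathbb{K}$ containing $0$ and $1$, and it is Archimedean by the very definition of $\mathcal{F}(\mathbb{K})$, since every element is dominated by some $n\in\mathbb{N}$. Next, for $a,b\in\mathcal{I}(\mathbb{K})$ and $n\in\mathbb{N}$, applying the defining inequality with $2n$ gives $|a-b|\le|a|+|b|<\frac1{2n}+\frac1{2n}=\frac1n$, so $a-b\in\mathcal{I}(\mathbb{K})$; and for $r\in\mathcal{F}(\mathbb{K})$ with $|r|\le m$ (we may take $m\ge1$) and $a\in\mathcal{I}(\mathbb{K})$ we get $|ra|\le m|a|<m\cdot\frac1{mn}=\frac1n$ for every $n$, so $ra\in\mathcal{I}(\mathbb{K})$; hence $\mathcal{I}(\mathbb{K})$ is an ideal. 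Convexity is then immediate: if $a\in\mathcal{F}(\mathbb{K})$ and $|a|\le b\in\mathcal{I}(\mathbb{K})$, then $|a|\le b<\frac1n$ for all $n$, i.e.\ $a\in\mathcal{I}(\mathbb{K})$.

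The engine of everything that remains is the observation that $a\in\mathcal{F}(\mathbb{K})\setminus\mathcal{I}(\mathbb{K})$ forces $a^{-1}\in\mathcal{F}(\mathbb{K})$: such an $a$ is nonzero, and negating the definition of $\mathcal{I}(\mathbb{K})$ produces some $n\in\mathbb{N}$ with $|a|\ge\frac1n$, whence $|a^{-1}|=1/|a|\le n$. Consequently, in the commutative ring $\mathcal{F}(\mathbb{K})/\mathcal{I}(\mathbb{K})$ every class $a+\mathcal{I}(\mathbb{K})$ with $a\notin\mathcal{I}(\mathbb{K})$ is invertible, with inverse $a^{-1}+\mathcal{I}(\mathbb{K})$; so that quotient is a field and $\mathcal{I}(\mathbb{K})$ is a maximal ideal.

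Finally, to see that the quotient is a totally ordered Archimedean field, I would declare its positive cone to be $\{a+\mathcal{I}(\mathbb{K}):a\in\mathcal{F}(\mathbb{K}),\ a>0,\ a\notin\mathcal{I}(\mathbb{K})\}$. Well-definedness of this set rests on convexity: if $a>0$, $a\notin\mathcal{I}(\mathbb{K})$ and $i\in\mathcal{I}(\mathbb{K})$, then $|i|<|a|=a$, hence $a+i>0$ and $a+i\notin\mathcal{I}(\mathbb{K})$, so membership in the cone does not depend on the representative. The axioms of Definition~\ref{D: Totally Ordered Rings} are then routine: $\overline0$ is excluded since $a\notin\mathcal{I}(\mathbb{K})$ means $a+\mathcal{I}(\mathbb{K})\neq\overline0$; stability under addition, together with the fact that a sum of two such classes stays non-infinitesimal, again uses convexity (if $a+b\in\mathcal{I}(\mathbb{K})$ with $0<a\le a+b$ then $a\in\mathcal{I}(\mathbb{K})$); stability under multiplication uses the key observation above (if $ab\in\mathcal{I}(\mathbb{K})$ then $a=(ab)b^{-1}\in\mathcal{I}(\mathbb{K})$); and trichotomy descends from trichotomy in $\mathbb{K}$. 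Archimedeanity is then clean: given $a+\mathcal{I}(\mathbb{K})$, choose $n\in\mathbb{N}$ with $|a|\le n$ in $\mathcal{F}(\mathbb{K})$; since the quotient map is an order-preserving ring homomorphism of totally ordered sets it commutes with $|\cdot|=\max(\cdot,-\cdot)$, so $|a+\mathcal{I}(\mathbb{K})|\le n+\mathcal{I}(\mathbb{K})$, and Theorem~\ref{T: Archimedean Property} applies. I expect the only place calling for care to be this last step — exhibiting a well-defined order on the quotient and checking its axioms — whereas the ring-theoretic parts are straightforward bookkeeping organized around the single fact that non-infinitesimal finite elements have finite inverses.
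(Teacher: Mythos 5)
Your proof is correct. The paper itself gives no proof of this theorem (it is among the ``missing proofs'' deferred to the references Lang, van der Waerden, Hungerford, Ribenboim and Dales \& Woodin cited at the start of Section~\ref{S: Preliminaries: Ordered Rings and Fields}), and your argument is the standard one: all the verifications check out, with the two load-bearing facts correctly identified as convexity of $\mathcal{I}(\mathbb{K})$ and the observation that a finite non-infinitesimal element has finite inverse, which together yield maximality of the ideal and the well-definedness of the order on the quotient.
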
     

\begin{remark}[Indivisibles?] The infinitesimals in $\mathbb K$ are always \emph{divisible} in the sense that if $dx\in\mathbb K$ is a non-zero infinitesimal, so is $dx/n$ for every $n\in\mathbb N$. For the history and philosophy of \emph{indivisibles} in the early years of infinitesimal calculus we refer the reader to Alexander\cite{aAlexander2014}. 
\end{remark}

\begin{definition}[Valuation on an Ordered Field]\label{D: Valuation on an Ordered Field} Let $\mathbb K$ be a totally ordered field. Then:
\begin{D-enum}
\item The mapping $v:\mathbb K\to \mathbb R\cup\{\infty\}$ is called a \emph{non-Archimedean valuation} on $\mathbb K$ if, for every $x,y\in\mathbb K$ the properties:
\begin{D-enum}
    \item $v(x)=\infty$ if and only if $x=0$,
    \item $v(xy)=v(x)+v(y)$ (\emph{Logarithmic property}),
    \item $v(x + y)\ge\min\{v(x), v(y)\}$ (\emph{Ultrametric Inequality}),
    \item $|x| < |y|$ implies $v(x) \ge v(y)$ (\emph{Convexity property}),
  \end{D-enum}
hold.
  The structure $(\mathbb K,v)$, denoted as $\mathbb K$ for short, is called an \emph{ordered valuation field}.
 
	\item We define the \textbf{valuation norm} $||\cdot||_v: \mathbb K\to\mathbb R$ by the formula $||x||_v=e^{-v(x)}$ with the understanding that $e^{-\infty}=0$.  Also, the formula  $d_v(x, y)=||x-y||_v$ defines the \emph{valuation metric} $d_v: \mathbb K\times\mathbb K\to\mathbb R$. We denote by $(\mathbb{K}, d_v)$ the \emph{associated metric space}.
\item The valuation $v$ is called \textbf{trivial} if $v(x)=0$ for all non-zero $x\in\mathbb K$.

\end{D-enum}
\end{definition}

\begin{example}
   The field of rational functions above is a valuation field with the valuation $v:\mathbb{K}(t)\to \mathbb R\cup\{\infty\}$ defined as follows: If $P\in\mathbb{K}[t]$ is a non-zero polynomial, then $v(P)$ is the lowest power of $t$ in $P$ and if $Q$ is another non-zero polynomial, then $v(P/Q)=v(P)-v(Q)$.   
\end{example}

The name ``non-Archimedean valuation'' is justified by the following result (the proof is left to the reader):

\begin{theorem}[Valuation on an Archimedean Field]\label{C: Valuation on an Archimedean Field} Let
$(\mathbb{K}, v)$  be an ordered valuation field. If  $\mathbb{K}$ is
Archimedean, then the valuation $v$ is  trivial. Consequently, if $v$ is non-trivial, then $\mathbb K$ is a non-Archimedean field. 
\end{theorem}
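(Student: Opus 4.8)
The plan is to prove directly that for every nonzero $x\in\mathbb{K}$ one has $v(x)=0$, so that $v$ is trivial; the final sentence is then immediate, since a non-trivial valuation forces $\mathbb{K}$ to fail the Archimedean property while remaining a field, i.e.\ to be a non-Archimedean field.

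I would begin by squeezing out the routine algebraic consequences of the axioms in Definition~\ref{D: Valuation on an Ordered Field}. The logarithmic property gives $v(1)=v(1\cdot 1)=2v(1)$, hence $v(1)=0$; likewise $2v(-1)=v(1)=0$, so $v(-1)=0$ and therefore $v(-x)=v(x)$, which means we may always replace $x$ by $|x|$. For nonzero $x$, the logarithmic property applied to $x\cdot x^{-1}$ yields $v(x)+v(x^{-1})=v(1)=0$, i.e.\ $v(x^{-1})=-v(x)$ (all quantities finite because $x,x^{-1}\neq 0$, using axiom (a)). Finally, for a positive integer $m$, viewed in $\mathbb{K}$ through the canonical embedding, the ultrametric inequality together with an induction on $m$ gives $v(m)=v(\underbrace{1+\cdots+1}_{m})\ge\min\{v(1),\dots,v(1)\}=0$.

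The one place the Archimedean hypothesis enters is the main step. Given a nonzero $x\in\mathbb{K}$, the Archimedean property provides $n\in\mathbb{N}$ with $|x|\le n<n+1$, so $|x|<|n+1|$; the convexity property (d) then gives $v(x)=v(|x|)\ge v(n+1)\ge 0$. Thus $v(x)\ge 0$ for every nonzero $x$. Applying this inequality to $x^{-1}$ and using $v(x^{-1})=-v(x)$ gives $v(x)\le 0$ as well, whence $v(x)=0$. This proves that $v$ is trivial, and the ``consequently'' clause follows at once by contraposition.

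I do not anticipate a genuine obstacle here; the only points that require a moment's attention are (i) that bounding $v$ on the embedded natural numbers uses the \emph{ultrametric} inequality, not the logarithmic property, and (ii) that one must know $v(x)\in\mathbb{R}$ (not $\infty$) before adding or negating values, which is exactly axiom (a). It is perhaps worth remarking that the argument uses all four valuation axioms and that it does not require the (true) fact that an Archimedean ordered field embeds into $\mathbb{R}$.
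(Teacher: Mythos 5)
The paper leaves this proof to the reader, so there is no official argument to compare against; your proof is correct and is the natural one. The preliminary identities ($v(1)=0$, $v(-x)=v(x)$, $v(x^{-1})=-v(x)$, $v(m)\ge 0$ via the ultrametric inequality) are all justified, and the key step---combining the Archimedean bound $|x|\le n<n+1$ with the convexity axiom to get $v(x)\ge 0$, then applying the same to $x^{-1}$---cleanly yields $v(x)=0$.
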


\begin{remark}[$p$-Adic Valuation] Notice that the above theorem fails if the valuation $v$ is defined by (a)-(c) only (without property (d) in Definition~\ref{D: Valuation on an Ordered Field}). Indeed, let $p$ be a prime number. Every non-zero rational number $x$ can be
uniquely presented in the form $x=(a/b)p^\alpha$, where $a, b, \alpha\in\mathbb{Z}$ and $a$
and $b$ are not divisible by $p$. The $p$-\textbf{adic valuation} on
$\mathbb{Q}$ is defined by $v(0)=\infty$ and by
$v(x)=\alpha$ for $x\in\mathbb{Q},\, x\not=0$ (Fernando Q. Gouv\^{e}a~\cite{fqGouv}). Notice that $v_p$ is non-convex; for example, $v_2(2)=1<2=v_p(12)$. Notice as well that $v_p$ is non-trivial although $\mathbb Q$ is an Archimedean field (by Definition~\ref{D: Totally Ordered Rings} adopted in this article). 
\end{remark}


\section{Completeness of an Ordered Field}\label{S: Completeness of an Ordered Field}
  
We provide a collection of definitions of several different forms of completeness of a totally ordered field -- not necessarily Archimedean. The relations between these different forms of completeness will be discussed in the next two sections. 

\begin{definition}[Completeness of a Totally Ordered Field]\label{D: Completeness of a Totally Ordered Field}
  Let $\mathbb{K}$ be a totally ordered field.
  \begin{D-enum}
  \item If $\kappa$ is an uncountable cardinal, then $\mathbb{K}$ is called \emph{Cantor $\kappa$-complete} if every family $\{[a_\gamma,b_\gamma]\}_{\gamma\in \Gamma}$ of fewer than $\kappa$ closed bounded intervals in $\mathbb{K}$ with the finite intersection property (F.I.P.) has a non-empty intersection, $\bigcap_{\gamma\in \Gamma} [a_\gamma,b_\gamma]\neq \varnothing$. We say that $\mathbb{K}$ is \emph{sequentially Cantor complete} if every nested sequence of bounded closed intervals in $\mathbb{K}$ has a non-empty intersection (that is to say that $\mathbb K$ is Cantor $\aleph_1$-complete, where $\aleph_1$ is the successor of $\aleph_0=\card(\mathbb{N})$).  We say that $\mathbb{K}$ is simply \emph{Cantor complete} if every family $\{[a_\gamma,b_\gamma]\}_{\gamma\in \Gamma}$ with the finite intersection property  and with $\card(\Gamma)\leq\cof(\mathbb K)$, has a non-empty intersection (that is to say that $\mathbb K$ is Cantor $\kappa^+$-complete for $\kappa=\cof(\mathbb K)$, where $\kappa^+$ is the successor of $\kappa$).

   \item Let $\mathbb K$ be Archimedean and $^*\mathbb K$ be a non-standard extension (Davis~\cite{davis},  Lindstr\o m~\cite{lindstrom} and/or  Example~\ref{Ex: Non-Standard Real Numbers} in this article). Let $\mathcal F(^*\mathbb K)$ and $\mathcal I(^*\mathbb K)$ denote the sets of finite and infinitesimal elements in $^*\mathbb K$, respectively (see Definition~\ref{D: Infinitesimals, etc.}). Then we say that $\mathbb{K}$ is \emph{Leibniz complete} if every $x\in\mathcal{F}(^*\mathbb{K})$ can be presented (uniquely) in the form $x=r+dx$ for some $r\in\mathbb K$ and some $dx\in\mathcal{I}(^*\mathbb{K})$. 
   \item $\mathbb K$ is \emph{Heine-Borel complete} if a subset $A\subseteq \mathbb K$ is compact if and only if $A$ is closed and bounded.

   \item We say that $\mathbb{K}$ is \emph{monotone complete} if every bounded increasing sequence is convergent.
 
   \item We say that $\mathbb{K}$ is \emph{Weierstrass complete} if every bounded sequence has a convergent subsequence.

   \item We say that $\mathbb{K}$ is \emph{Bolzano complete} if every bounded infinite set has a cluster point.

   \item $\mathbb{K}$ is \emph{Cauchy complete} (or, simple \emph{complete} for short) if it can not be embedded into another field as a dense subfield. (This is equivalent to the property that every fundamental $I$-net in $\mathbb K$ is convergent, where $I$ is an index set with $\card(I)={\rm cof}(\mathbb K)$.) 
      
   \item We say that $\mathbb{K}$ is simply \emph{sequentially complete} if every fundamental (Cauchy) sequence in $\mathbb K$ converges (regardless of whether or not $\cof(\mathbb K)=\aleph_0$).

   \item $\mathbb{K}$ is \emph{Dedekind complete} (or \emph{order complete}) if every non-empty subset of $\mathbb{K}$ that is bounded from above has a supremum.
 
   \item Let $\mathbb{K}$ be Archimedean. Then $\mathbb{K}$ is \emph{Hilbert complete} if $\mathbb{K}$ is a maximal Archimedean field in the sense that $\mathbb K$ has no proper totally ordered Archimedean field extension.

   \item  If $\kappa$ is an infinite cardinal, $\mathbb K$ is called \emph{algebraically $\kappa$-saturated} if every family $\{(a_\gamma,b_\gamma)\}_{\gamma\in \Gamma}$ of fewer than $\kappa$ open intervals in $\mathbb K$ with the finite intersection property has a non-empty intersection, $\bigcap_{\gamma\in \Gamma} (a_\gamma,b_\gamma)\neq \emptyset$. If $\mathbb{K}$ is algebraically $\card(\mathbb K)$-saturated, then we simply say that $\mathbb{K}$ is \emph{algebraically saturated}.

\item A metric space is called\, \emph{spherically complete} if every
nested sequence of closed balls has
nonempty intersection. In particular, an ordered valuation field $(\mathbb K, v)$ is \emph{spherically complete} if the associated metric space $(\mathbb K, d_v)$ is spherically complete (Definition~\ref{D: Valuation on an Ordered Field}).
  \end{D-enum}
\end{definition}
\begin{remark}[Terminology]\label{R: Terminology} Here are some remarks about the above terminology:
  
\begin{itemize}
  \item \emph{Leibniz completeness} appears in the early Leibniz-Euler Infinitesimal Calculus as the statement that ``every finite number is infinitesimally close to a unique usual quantity.'' Here the ``usual quantities'' are what we now refer to as the real numbers and $\mathbb{K}$ in the definition above should be identified with the set of the reals $\mathbb{R}$. We will sometimes express the Leibniz completeness as $\mathcal{F}(^*\mathbb{K})=\mu(\mathbb{K})$ (Definition~\ref{D: Infinitesimals, etc.}) which is equivalent to $\mathcal{F}(^*\mathbb{K})/\mathcal{I}(^*\mathbb{K})=\mathbb{K}$ (Theorem~\ref{T: Maximal Ideal}).

	\item \emph{Cantor $\kappa$-completeness}, monotone completeness, Weierstrass completeness, Bolzano completeness and Heine-Borel completeness typically appear in real analysis as ``theorems'' or ``important principles'' rather than as forms of completeness; however, in non-standard analysis, Cantor $\kappa$-completeness takes a much more important role along with the concept of algebraic saturation.

	 \item \emph{Cauchy completeness}, listed as number 7 above, is commonly known as \emph{sequential completeness} in the particular case of Archimedean fields (and metric spaces), where $I=\mathbb N$. It has also been used in constructions of the real numbers: Cantor's construction using fundamental (Cauchy) sequences (see Hewitt \& Stromberg~\cite{hewitt} and  O'Connor~\cite{oconnor} and also Borovik \& Katz~\cite{BorovikKatz}).

  \item \emph{Dedekind completeness} was introduced by Dedekind (independently from many others, see O'Connor~\cite{oconnor}) at the end of the $19^\textrm{th}$ century. From the point of view of modern mathematics, Dedekind proved the consistency of the axioms of the real numbers by constructing his field of Dedekind cuts, which is an example of a Dedekind complete totally ordered field.
  
  \item \emph{Hilbert completeness} was originally introduced by Hilbert in 1900 with his axiomatic definition of the real numbers (see Hilbert~\cite{hilbert} and O'Connor~\cite{oconnor}).
  \end{itemize}
\end{remark}
	Here are some commonly known facts about Dedekind completeness.
\begin{theorem}[Existence of Dedekind Fields]\label{T: Existence of Dedekind Fields}
  There exists a Dedekind complete field.
\end{theorem}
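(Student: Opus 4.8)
The plan is to carry out the classical construction of Dedekind, exhibiting a concrete Dedekind complete field as the set of \emph{cuts} of $\mathbb{Q}$. Call a subset $A\subsetneq\mathbb{Q}$ a \emph{cut} if it is non-empty, downward closed (so that $p\in A$ and $q<p$ force $q\in A$), and has no largest element. Let $\mathbb{D}$ denote the set of all cuts, ordered by inclusion: $A\le B$ iff $A\subseteq B$. First I would check that this is a total order and that $q\mapsto\{p\in\mathbb{Q}:p<q\}$ embeds $\mathbb{Q}$ into $\mathbb{D}$.

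Next I would equip $\mathbb{D}$ with field operations. Addition is the easy case: set $A+B=\{a+b:a\in A,\ b\in B\}$, with zero $0^{*}=\{q\in\mathbb{Q}:q<0\}$ and additive inverse $-A=\{q\in\mathbb{Q}:(\exists r>0)\,(-q-r\notin A)\}$; verifying that $A+B$ is again a cut and that the abelian group axioms hold is routine. Multiplication needs a sign case-split: for $A,B\ge 0^{*}$ put $A\cdot B=0^{*}\cup\{ab:a\in A,\ b\in B,\ a\ge 0,\ b\ge 0\}$, and extend to all of $\mathbb{D}$ by $A\cdot B=-(|A|\cdot|B|)$ when exactly one factor is negative and $A\cdot B=|A|\cdot|B|$ when both are, where $|A|=\max(A,-A)$. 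One then verifies associativity, commutativity, distributivity, that $1^{*}=\{q\in\mathbb{Q}:q<1\}$ is the multiplicative identity, and that each $A\ne 0^{*}$ has an inverse (defined first for $A>0^{*}$ and then extended by signs). Finally one checks that the positive cuts are closed under $+$ and $\cdot$ and satisfy trichotomy, so that $\mathbb{D}$ is a totally ordered field in the sense of Definition~\ref{D: Totally Ordered Rings}.

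The payoff step is Dedekind completeness, and it is short. Let $\mathcal{S}\subseteq\mathbb{D}$ be non-empty and bounded above by a cut $C$, and put $U=\bigcup_{A\in\mathcal{S}}A$. Then $U$ is non-empty and downward closed, and $U\subseteq C\subsetneq\mathbb{Q}$ gives $U\ne\mathbb{Q}$; moreover $U$ has no largest element, since a largest element of $U$ would be a largest element of some $A\in\mathcal{S}$. Hence $U\in\mathbb{D}$, and plainly $U$ is an upper bound for $\mathcal{S}$ that is contained in every upper bound, so $U=\sup\mathcal{S}$.

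The main obstacle is not conceptual but bookkeeping: verifying that multiplication is well defined and satisfies the ring axioms on \emph{all} cuts, not merely the non-negative ones, is the fiddly part, because the sign case-split turns distributivity in particular into a multi-case argument. An alternative route that avoids this is Cantor's construction — form the ring of Cauchy sequences of rationals, quotient by the maximal ideal of null sequences, order the quotient, and show it is Dedekind complete; this trades the multiplication bookkeeping for the argument that Cauchy completeness together with the Archimedean property yields the least-upper-bound property. Either construction establishes the theorem.
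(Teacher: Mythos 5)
Your proposal is correct and follows exactly the route the paper takes: the paper's proof simply defers to the classical constructions of Dedekind (cuts, as in Rudin) and Cantor (Cauchy sequences, as in Hewitt \& Stromberg), and you have sketched the Dedekind-cut construction in sound detail, including the key supremum-as-union step, while correctly flagging the sign case-split for multiplication as the only laborious verification. Nothing is missing beyond the routine bookkeeping you already identified.
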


\begin{proof} For the classical constructions of such fields due to Dedekind and Cantor, we refer the reader to  Rudin~\cite{poma} and Hewitt \& Stromberg~\cite{hewitt}, respectively. For a more recent proof of the existence of a Dedekind complete field (involving the axiom of choice) we refer to Banaschewski~\cite{bana} and for a non-standard proof of the same result we refer to Hall \& Todorov~\cite{HallTodDedekind11}.
\end{proof}


\begin{theorem}\label{T: Embedding}
  Let $\mathbb{A}$ be an Archimedean field and $\mathbb{D}$ be a Dedekind complete field. For every  $\alpha\in\mathbb{A}$ we let $C_\alpha=:\{q\in\mathbb{Q}: q<\alpha\}$. Then for every $\alpha,\beta\in\mathbb{A}$ we have: (i) $\sup_\mathbb{D}(C_{\alpha+\beta})=\sup_\mathbb{D}(C_\alpha)+\sup_\mathbb{D}(C_\beta)$.; (ii) $\sup_\mathbb{D}(C_{\alpha\beta})=\sup_\mathbb{D}(C_\alpha)\sup_\mathbb{D}(C_\beta)$; (iii) $\alpha\leq\beta$ implies $C_\alpha\subseteq C_\beta$. Consequently, the mapping $\sigma:\mathbb{A}\to\mathbb{D}$, given by $\sigma(\alpha)=:\sup_\mathbb{D}(C_\alpha)$, is an order field embedding of $\mathbb{A}$ into $\mathbb{D}$.  
\end{theorem}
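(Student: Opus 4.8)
The plan is to first make the suprema in the statement meaningful, then dispose of (iii), (i), (ii) in that order, and finally assemble $\sigma$. Fix the canonical copies of $\mathbb{Q}$ inside $\mathbb{A}$ and inside $\mathbb{D}$. Since $\mathbb{A}$ is Archimedean, for each $\alpha\in\mathbb{A}$ there are $m,n\in\mathbb{N}$ with $-m<\alpha<n$ (Definition~\ref{D: Totally Ordered Rings}, Theorem~\ref{T: Archimedean Property}), so $-m\in C_\alpha$ and $n\in\ub(C_\alpha)$; hence $C_\alpha$ is a non-empty subset of $\mathbb{Q}\subseteq\mathbb{D}$ that is bounded above, and $\sup_\mathbb{D}(C_\alpha)$ exists because $\mathbb{D}$ is Dedekind complete. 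I would also establish once and for all the density of $\mathbb{Q}$ in $\mathbb{A}$: given $x<y$ in $\mathbb{A}$, choose $n\in\mathbb{N}$ with $1/n<y-x$ and set $m=\max\{k\in\mathbb{Z}: k\le nx\}$ (the max exists since $nx$ is finite and $\mathbb{A}$ is Archimedean), so that $x<(m+1)/n<y$. This density statement is the workhorse for everything that follows; note it also gives density of $\mathbb{Q}$ in $\mathbb{D}$, so $\sup_\mathbb{D}\{q\in\mathbb{Q}:q<0\}=0$ and $\sup_\mathbb{D}\{q\in\mathbb{Q}:q<1\}=1$.

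Item (iii) is immediate: if $\alpha\le\beta$ and $q\in\mathbb{Q}$ with $q<\alpha$, then $q<\beta$, so $C_\alpha\subseteq C_\beta$ and hence $\sup_\mathbb{D}(C_\alpha)\le\sup_\mathbb{D}(C_\beta)$. For (i) I would run the classical Dedekind-cut computation twice. For the inequality ``$\ge$'': any $q_1\in C_\alpha$ and $q_2\in C_\beta$ give $q_1+q_2\in C_{\alpha+\beta}$, hence $q_1+q_2\le\sup_\mathbb{D}(C_{\alpha+\beta})$, and a double application of the defining property of the supremum yields $\sup_\mathbb{D}(C_\alpha)+\sup_\mathbb{D}(C_\beta)\le\sup_\mathbb{D}(C_{\alpha+\beta})$. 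For ``$\le$'': given $q\in C_{\alpha+\beta}$ we have $q-\beta<\alpha$, so by density there is a rational $q_1$ with $q-\beta<q_1<\alpha$; then $q_2:=q-q_1\in\mathbb{Q}$ satisfies $q_2<\beta$, so $q_1\in C_\alpha$, $q_2\in C_\beta$, and $q=q_1+q_2\le\sup_\mathbb{D}(C_\alpha)+\sup_\mathbb{D}(C_\beta)$. Taking the sup over $q$ completes (i).

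The main obstacle is (ii), because of the sign bookkeeping. I would first prove it under the extra hypothesis $\alpha,\beta>0$. Density then provides positive rationals below $\alpha$ and below $\beta$, which are cofinal in $C_\alpha$ and $C_\beta$, so $\sup_\mathbb{D}(C_\alpha)=\sup\{q\in\mathbb{Q}_+:q<\alpha\}>0$ and likewise for $\beta$. The inequality ``$\ge$'' holds because $0<q_1<\alpha$ and $0<q_2<\beta$ force $q_1q_2\in C_{\alpha\beta}$, and ``$\le$'' follows by the division trick of (i): for $0<q<\alpha\beta$ one has $q/\beta<\alpha$, so there is a positive rational $q_1$ with $q/\beta<q_1<\alpha$, and $q_2:=q/q_1\in\mathbb{Q}_+$ satisfies $q_2<\beta$ with $q=q_1q_2$. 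Both directions rely on the elementary lemma that for non-empty sets $A,B$ of positive elements of an ordered field with suprema $a,b$ one has $\sup(AB)=ab$, which I would prove by the usual two-sided estimate. To remove positivity, note that (i) with $\beta=-\alpha$ together with $\sup_\mathbb{D}(C_0)=0$ gives $\sup_\mathbb{D}(C_{-\alpha})=-\sup_\mathbb{D}(C_\alpha)$; the cases in which one or both of $\alpha,\beta$ are negative then reduce to the positive case by factoring out signs, and the case in which one factor is $0$ is trivial since $\sup_\mathbb{D}(C_0)=0$.

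Finally I would assemble $\sigma$. By (i) and (ii) it is additive and multiplicative, and $\sigma(1)=1\ne 0$, so it is a unital ring homomorphism. It is strictly order-preserving: given $\alpha<\beta$, density furnishes rationals with $\alpha<q_1<q_2<\beta$, whence $q_1\in\ub(C_\alpha)$ gives $\sigma(\alpha)\le q_1$ while $q_2\in C_\beta$ gives $\sigma(\beta)\ge q_2>q_1\ge\sigma(\alpha)$. In particular $\sigma$ is injective, so it is an order field embedding of $\mathbb{A}$ into $\mathbb{D}$, as claimed. (One could shortcut part of the sign analysis in (ii) by noting that an additive order-preserving map fixing $\mathbb{Q}$ is $\mathbb{Q}$-linear and, by density of $\mathbb{Q}$ in $\mathbb{A}$ and a squeezing argument, coincides with multiplication by $\sigma(\alpha)$ on all of $\mathbb{A}$ whenever $\alpha>0$; but the direct route keeps the argument phrased entirely in terms of suprema of cuts, matching the statement.)
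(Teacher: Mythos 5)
The paper states Theorem~\ref{T: Embedding} without proof (it is offered as one of the ``commonly known facts'' about Dedekind completeness), so there is nothing to compare against line by line; judged on its own, your proof is correct and is the standard Dedekind-cut-arithmetic argument the authors surely have in mind. The structure is sound: well-definedness of the suprema via the Archimedean property of $\mathbb{A}$ and Dedekind completeness of $\mathbb{D}$, density of $\mathbb{Q}$ as the workhorse, the two-sided estimates for (i), the reduction of (ii) to the positive case via $\sup_\mathbb{D}(C_{-\alpha})=-\sup_\mathbb{D}(C_\alpha)$, and strict monotonicity from a pair of interpolated rationals. One small presentational point: when you invoke density of $\mathbb{Q}$ in $\mathbb{D}$ (to get $\sup_\mathbb{D}(C_0)=0$ and $\sup_\mathbb{D}(C_1)=1$, and in the lemma $\sup(AB)=ab$) you are implicitly using that a Dedekind complete field is Archimedean; in the paper this is Theorem~\ref{T: Dedekind}, which appears \emph{after} the embedding theorem but whose proof does not depend on it, so you should cite it explicitly to make clear there is no circularity.
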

	
\begin{corollary}\label{C: ded order iso}
  All Dedekind complete fields are mutually order-isomorphic and they have the same cardinality, which is usually denoted by $\mathfrak c$. Consequently,  every Archimedean field has cardinality at most $\mathfrak c$.
\end{corollary}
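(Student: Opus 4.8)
The plan is to build everything on Theorem~\ref{T: Embedding}, upgrading the embedding it supplies to an order isomorphism whenever the source field is itself Dedekind complete. First I would record a fact that is not stated explicitly in the excerpt but is needed in order to put a Dedekind complete field in the role of $\mathbb A$ in that theorem: \emph{every Dedekind complete field $\mathbb D$ is Archimedean}. If it were not, some $|x|$ with $x\in\mathbb D$ would bound $\mathbb N\subseteq\mathbb D$ from above, so $\mathbb N$ would have a supremum $s$; then $s-1$ is not an upper bound, hence $n>s-1$ for some $n\in\mathbb N$, giving $n+1>s$ and contradicting that $s$ bounds $\mathbb N$. By Theorem~\ref{T: Archimedean Property} this also gives $\mathcal I(\mathbb D)=\{0\}$, and hence that $\mathbb Q$ is dense in $\mathbb D$, a fact I will use below.

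Next I would take two Dedekind complete fields $\mathbb D_1,\mathbb D_2$; by the previous paragraph both are Archimedean, so Theorem~\ref{T: Embedding} applies in both directions and produces order field embeddings $\sigma_1\colon\mathbb D_1\to\mathbb D_2$ and $\sigma_2\colon\mathbb D_2\to\mathbb D_1$. I would then show that the composition $\tau=\sigma_2\circ\sigma_1\colon\mathbb D_1\to\mathbb D_1$ is the identity. Since any field homomorphism fixes the prime field, and the prime field of an ordered (hence characteristic-zero) field is a copy of $\mathbb Q$, the map $\tau$ restricts to the identity on $\mathbb Q$. If $\tau(\alpha)\neq\alpha$ for some $\alpha\in\mathbb D_1$, say $\tau(\alpha)>\alpha$, then by density of $\mathbb Q$ there is $q\in\mathbb Q$ with $\alpha<q<\tau(\alpha)$; applying the order-preserving $\tau$ to $\alpha<q$ yields $\tau(\alpha)<\tau(q)=q$, a contradiction, and the case $\tau(\alpha)<\alpha$ is symmetric. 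Hence $\sigma_2\circ\sigma_1=\mathrm{id}_{\mathbb D_1}$, and likewise $\sigma_1\circ\sigma_2=\mathrm{id}_{\mathbb D_2}$, so $\sigma_1$ is a bijective order field homomorphism, i.e. an order isomorphism. Since a Dedekind complete field exists at all by Theorem~\ref{T: Existence of Dedekind Fields}, all Dedekind complete fields are mutually order-isomorphic and therefore share a single cardinality, which we may name $\mathfrak c$.

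For the last assertion, let $\mathbb A$ be any Archimedean field and fix a Dedekind complete field $\mathbb D$ (of cardinality $\mathfrak c$). Theorem~\ref{T: Embedding} provides an injection $\sigma\colon\mathbb A\to\mathbb D$, so $\card(\mathbb A)\le\card(\mathbb D)=\mathfrak c$.

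The only step here that is more than bookkeeping is the surjectivity of $\sigma_1$; I would handle it, as above, via the principle that an order field embedding of a Dedekind complete field into itself is the identity, which in turn is just density of the rationals together with the fact that field homomorphisms fix $\mathbb Q$. An equally valid route would be to prove surjectivity of the map $\sigma$ of Theorem~\ref{T: Embedding} directly when $\mathbb A$ is Dedekind complete, by verifying that $\alpha=\sup_{\mathbb A}\{q\in\mathbb Q:q<d\}$ satisfies $\sigma(\alpha)=d$ for each $d\in\mathbb D$; the identity-map argument is preferable because it sidesteps that computation.
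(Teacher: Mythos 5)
Your proof is correct and follows the route the paper intends: the corollary is stated without proof immediately after Theorem~\ref{T: Embedding}, and your argument (embed each Dedekind complete field into the other, then use density of $\mathbb Q$ and the fact that field homomorphisms fix the prime field to show both composites are the identity) is the standard way to fill in that omission. You also correctly noticed that applying Theorem~\ref{T: Embedding} with a Dedekind complete field in the role of $\mathbb A$ requires knowing that Dedekind completeness implies the Archimedean property --- a fact the paper only records afterwards as Theorem~\ref{T: Dedekind} --- and your proof of it matches the paper's.
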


\begin{theorem}\label{T: Dedekind} Every Dedekind complete totally ordered field is Archimedean.
\end{theorem}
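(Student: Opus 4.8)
The plan is to argue by contradiction, exploiting the characterization of the Archimedean property already recorded in Theorem~\ref{T: Archimedean Property}. Suppose $\mathbb{K}$ is Dedekind complete but \emph{not} Archimedean. By Theorem~\ref{T: Archimedean Property}, $\mathcal{L}(\mathbb{K})\neq\varnothing$, so there is some $x\in\mathbb{K}$ with $n<|x|$ for every $n\in\mathbb{N}$, where here $\mathbb{N}$ is identified with its image under the canonical embedding $\mathbb{Q}\hookrightarrow\mathbb{K}$. Thus $\mathbb{N}$, viewed as a subset of $\mathbb{K}$, is non-empty and bounded above (by $|x|$).

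Next I would invoke Dedekind completeness to produce $s=:\sup_{\mathbb{K}}(\mathbb{N})$. The key step is then the standard ``$s-1$'' trick: since $s-1<s$, the element $s-1$ is not an upper bound of $\mathbb{N}$, so there exists $n\in\mathbb{N}$ with $s-1<n$, whence $s<n+1$. But $n+1\in\mathbb{N}$, contradicting the fact that $s$ is an upper bound of $\mathbb{N}$. This contradiction shows that $\mathbb{K}$ must be Archimedean.

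There is no real obstacle here; the argument is short and elementary. The only points deserving a word of care are bookkeeping ones: that the copy of $\mathbb{N}$ inside $\mathbb{K}$ really is unbounded precisely when $\mathbb{K}$ fails to be Archimedean (which is exactly the content of Theorem~\ref{T: Archimedean Property}), and that $\mathbb{N}$ is closed under the operation $n\mapsto n+1$ inside $\mathbb{K}$ (immediate, since the embedding is a ring homomorphism fixing $1$). Everything else is a direct application of the definition of \emph{Dedekind complete} from Definition~\ref{D: Completeness of a Totally Ordered Field}. Alternatively, one could run the same argument with the bounded set $\{n\,dx : n\in\mathbb{N}\}$ for a fixed positive infinitesimal $dx\in\mathcal{I}(\mathbb{K})$ (which is non-zero by Theorem~\ref{T: Archimedean Property} when $\mathbb{K}$ is a non-Archimedean field), but the version using $\sup_{\mathbb{K}}(\mathbb{N})$ is cleaner and I would present that one.
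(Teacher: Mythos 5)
Your argument is correct and coincides with the paper's own proof: both suppose $\mathbb{K}$ is non-Archimedean, use Theorem~\ref{T: Archimedean Property} to bound $\mathbb{N}$ above by an infinitely large element, take $s=\sup_{\mathbb{K}}(\mathbb{N})$, and derive a contradiction from $s-1<n$ for some $n\in\mathbb{N}$. No differences worth noting.
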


\begin{proof}
  Let $\mathbb{D}$ be such a field and suppose, to the contrary, that $\mathbb{D}$ is non-Archimedean. Then  $\mathcal L(\mathbb{D})\not=\varnothing$ by Theorem~\ref{T: Archimedean Property}. Thus $\mathbb{N}\subset\mathbb{D}$ is bounded from above by $|\lambda|$ for any $\lambda\in\mathcal L(\mathbb D)$ so that $\alpha=\sup_\mathbb D(\mathbb N)\in\mathbb{K}$ exists. Then there exists $n\in\mathbb{N}$ such that $\alpha-1< n$ implying $\alpha< n+1$, a contradiction.
\end{proof}
\section{Completeness of an Archimedean Field}\label{S: Completeness of an Archimedean Field}

	We show that in the particular case of an Archimedean field, the different forms of completeness (1)-(10) in Definition~\ref{D: Completeness of a Totally Ordered Field} are equivalent and each of them characterizes the field of real numbers. The reader who is interested in even more comprehensive characterization of the field of reals might consult also with Propp~\cite{jPropp} and  Teismann~\cite{hTeismann2013}. In the case of a non-Archimedean field, the equivalence of these different forms of completeness fails to hold -- we shall discuss this in the next section.

\begin{theorem}[Completeness of an Archimedean Field]\label{T: Completeness of an Archimedean Field} Let $\mathbb{K}$ be a totally ordered Archimedean field. Then the following are equivalent. 
  \begin{T-enum}
   \item $\mathbb{K}$ is Cantor $\kappa$-complete for all infinite cardinal $\kappa$.
     
   \item $\mathbb{K}$ is Leibniz complete.

   \item $\mathbb K$ is Heine-Borel complete.
                     
   \item $\mathbb{K}$ is monotone complete.
                  
   \item $\mathbb{K}$ is Cantor complete (i.e. Cantor $\aleph_1$-complete, not necessarily for all cardinals).
                  
   \item $\mathbb{K}$ is Weierstrass complete.
   
   \item $\mathbb{K}$ is Bolzano complete.
                  
   \item $\mathbb{K}$ is Cauchy complete.
                  
   \item $\mathbb{K}$ is Dedekind complete.

   \item $\mathbb{K}$ is Hilbert complete.
  \end{T-enum}
\end{theorem}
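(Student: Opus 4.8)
The plan is to establish the ten equivalences by proving a cycle of implications, supplemented by a few direct cross-links where a cycle would be awkward. Concretely, I would aim for a backbone
\[
(ix)\Rightarrow(i)\Rightarrow(v)\Rightarrow(iv)\Rightarrow(viii)\Rightarrow(ix),
\]
which already ties together Dedekind, Cantor $\kappa$-, Cantor $\aleph_1$-, monotone, and Cauchy completeness; then I would attach the remaining four nodes---Leibniz $(ii)$, Heine--Borel $(iii)$, Weierstrass $(vi)$, Bolzano $(vii)$, Hilbert $(x)$---by short two-way arguments to convenient points of the backbone. For instance $(iv)\Leftrightarrow(vi)\Leftrightarrow(vii)$ are essentially the standard real-analysis facts (monotone convergence, Bolzano--Weierstrass, cluster points of bounded infinite sets) and their converses; $(ix)\Rightarrow(iii)\Rightarrow(iv)$ handles Heine--Borel; $(ix)\Rightarrow(x)$ is immediate from Corollary~\ref{C: ded order iso} (a Dedekind complete field is a maximal Archimedean field), while $(x)\Rightarrow(ix)$ uses Theorem~\ref{T: Embedding} to embed $\mathbb K$ into a Dedekind complete $\mathbb D$ and maximality to force the embedding onto.

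The one genuinely non-elementary node is Leibniz completeness $(ii)$, since it refers to a nonstandard extension $^*\mathbb K$. Here I would argue $(ix)\Rightarrow(ii)$ by taking $x\in\mathcal F(^*\mathbb K)$, setting $r=\sup_{\mathbb K}\{q\in\mathbb Q: q<x\}$ (which exists by Dedekind completeness, the set being nonempty and bounded above since $x$ is finite), and checking $x-r\in\mathcal I(^*\mathbb K)$: if $|x-r|>1/n$ for some $n\in\mathbb N$ one produces a rational strictly between $r$ and $x$ contradicting the choice of $r$, or a rational below $x$ exceeding $r$. Uniqueness is Theorem~\ref{T: Archimedean Property} ($\mathcal I(\mathbb K)=\{0\}$ for Archimedean fields, applied after noting $r-r'\in\mathcal I(^*\mathbb K)\cap\mathbb K$). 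For the converse $(ii)\Rightarrow$ (something on the backbone), the cleanest target is $(viii)$ Cauchy completeness or directly $(ix)$: given a bounded-above nonempty $A\subseteq\mathbb K$, one uses the standard part map furnished by $(ii)$ applied to a suitable element of $^*\mathbb K$ (e.g. take, in the nonstandard extension, an element of $^*A$ of maximal "level", or apply saturation/overspill) to manufacture $\sup A$; alternatively route $(ii)\Rightarrow(v)$ by showing a nested sequence of intervals has a point via the standard part of any member of the intersection of the internal extensions.

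The main obstacle I anticipate is precisely the interface with nonstandard analysis in the $(ii)$ equivalences: one must be careful that the argument uses only properties guaranteed by the bare existence of a nonstandard extension (transfer and the existence of nonzero infinitesimals, i.e. $\mathbb K\subsetneq{}^*\mathbb K$) and does not secretly assume saturation, and one must handle the subtlety that ``$\sup$ in $\mathbb K$'' may not yet be available when proving $(ii)\Rightarrow(ix)$---so that direction should instead produce a Cauchy or monotone limit first and only then climb to Dedekind completeness via the backbone. Everything else is routine: the $\varepsilon$--$N$ manipulations for $(iv)\Leftrightarrow(vi)\Leftrightarrow(vii)\Leftrightarrow(viii)$, the compactness argument for $(iii)$, and the monotonicity/Archimedean bookkeeping that in several places silently uses Theorem~\ref{T: Archimedean Property} to pass between ``bounded'' and ``finite'' and to kill stray infinitesimals. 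I would present the cycle first, then the appended biconditionals, and finally collect the Leibniz and Hilbert cases, flagging the nonstandard steps explicitly.
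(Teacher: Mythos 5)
Your plan is correct in outline, but it is organized quite differently from the paper's proof, which establishes all ten statements as a single cycle $(i)\Rightarrow(ii)\Rightarrow\cdots\Rightarrow(x)\Rightarrow(i)$ rather than a short backbone with attached biconditionals. The substantive differences are at the two non-elementary nodes. For Leibniz completeness the paper proves $(i)\Rightarrow(ii)$: given $x\in\mathcal F(^*\mathbb K)$ it applies Cantor $\kappa$-completeness for $\kappa=\card(\mathbb K)^+$ to the family of all standard intervals $[a,b]$ with $a\le x\le b$ (an argument it then reuses verbatim for Theorem~\ref{T: Cardinality and Cantor Completeness}); your $(ix)\Rightarrow(ii)$ via $r=\sup_{\mathbb K}\{q\in\mathbb Q: q<x\}$ is essentially the Keisler argument that the authors mention in the remark after the theorem as an alternative, and it is fine. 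For Heine--Borel the paper goes $(ii)\Rightarrow(iii)$ using the nonstandard characterization of compactness ($S$ compact iff ${^*S}\subseteq\mu(S)$, citing Davis), whereas you derive $(iii)$ classically from $(ix)$; your route is more elementary there, but it obliges you to find a separate exit from $(ii)$, which is the one soft spot in your plan: ``an element of $^*A$ of maximal level'' is not well-defined, and overspill/saturation should be invoked only as an explicit property of the ultrapower (countable saturation does hold for it and suffices). The cleanest concrete realization of your second suggestion is $(ii)\Rightarrow(v)$: for nested $[a_n,b_n]$ take $\xi=\bra a_n\ket\in{^*\mathbb K}$ (or any point of $\bigcap_n{^*[a_n,b_n]}$), write $\xi=r+d\xi$ by $(ii)$, and check $r\in\bigcap_n[a_n,b_n]$ using that a nonzero element of the Archimedean field $\mathbb K$ cannot be infinitesimal in $^*\mathbb K$. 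The remaining pieces of your scheme --- $(v)\Rightarrow(iv)$ by bisection, the biconditionals among $(iv),(vi),(vii)$, $(iv)\Rightarrow(viii)\Rightarrow(ix)$, and the Hilbert equivalence via Theorem~\ref{T: Embedding} and Theorem~\ref{T: Existence of Dedekind Fields} --- coincide with the paper's individual steps up to reversing some arrows and are routine. The trade-off is clear: the paper's single cycle proves each statement from its immediate predecessor only and never needs a converse, while your decomposition quarantines the nonstandard analysis in one detachable lobe at the price of proving a few implications the paper gets for free from transitivity.
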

 
\begin{proof}
  \mbox{} 
  \begin{description}
  \item[$(i)\Rightarrow(ii)$] Let $\kappa$ be the successor of $\card(\mathbb{K})$. Let $x\in\mathcal F(^*\mathbb K)$ and $S=:\{[a,b]:a,b\in\mathbb K\emph{ and } a\le x\le b\textrm{ in }{^*\mathbb K}\}$. Clearly $S$ satisfies the finite intersection property and $\card(S)=\card(\mathbb{K}\times\mathbb{K})=\card(\mathbb{K})<\kappa$; thus, by assumption, there exists $r\in \bigcap_{[a,b]\in S} [a,b]$. To show $x-r\in\mathcal I(^*\mathbb K)$, suppose (to the contrary) that $\frac{1}{n}<|x-r|$ for some $n\in\mathbb N$. Then either $x<r-\frac{1}{n}$ or $r+\frac{1}{n}<x$. Thus (after letting $r-\frac{1}{n}=b$ or $r+\frac{1}{n}=a$) we conclude that either $r\le r-\frac{1}{n}$, or $r+\frac{1}{n}\le r$, a contradiction. 

 \item[$(ii)\Rightarrow(iii)$] Our assumption (ii) justifies the following definitions: We define $\st: \mathcal{F}(^*\mathbb K)\to\mathbb K$ by $\st(x)=r$ for $x=r+dx$, $dx\in\mathcal I({^*\mathbb K})$. Also, if $S\subset\mathbb K$, we let $\st[^*S]=\{\st(x): x\in{^*S}\cap\mathcal{F}(^*\mathbb K)\}$.  If $S$ is compact, then $S$ is bounded and closed since $\mathbb K$ is a Hausdorff space as an ordered field. Conversely, if $S$ is bounded and closed, it follows that $^*S\subset\mathcal{F}(^*\mathbb K)$ (Davis~\cite{davis}, p. 89) and $\st[^*S]=S$ (Davis~\cite{davis}, p. 77), respectively. Thus $^*S\subset\mu(S)$, i.e. $S$ is compact (Davis~\cite{davis}, p. 78). 
 
  \item[$(iii)\Rightarrow(iv)$] Let $(x_n)$ be a bounded from above strictly increasing sequence in $\mathbb K$ and let $A=\{x_n\}$ denote the range of the sequence. Clearly $\overline A\setminus A$ is either empty or contains a single element which is the limit of $(a_n)$; hence it suffices to show that $\overline A\neq A$. To this end, suppose, to the contrary, that $\overline A=A$. Then we note that $A$ is compact by assumption since $(a_n)$ is bounded; however, if we define $(r_n)$ by $r_1=1/2(x_2 - x_1)$, $r_n=\min\{r_1,\ldots, r_{n-1}, 1/2(x_{n+1} - x_n)\}$, then we observe that the sequence of open intervals $(U_n)$, defined by $U_n = (x_n-r_n, x_n+r_n)$, is an open cover of $A$ that has no finite subcover. Indeed, $(U_n)$ is pairwise disjoint so that every finite subcover contains only a finite number of terms of the sequence. The latter contradicts the compactness of $A$. 
 
  \item[$(iv)\Rightarrow(v)$] Suppose that $\{[a_i,b_i]\}_{i\in\mathbb{N}}$ satisfies the finite intersection property. Let $\Gamma_n=:\cap_{i=1}^n[a_i,b_i]$ and observe that $\Gamma_n=[\alpha_n, \beta_n]$ where $\alpha_n=:\max_{i\le n} a_i$ and $\beta_n=:\min_{i\le n}b_i$. Then $\{\alpha_n\}_{n\in\mathbb N}$ and $\{-\beta_n\}_{n\in\mathbb N}$ are bounded increasing sequences; thus $\alpha=:\lim_{n\to\infty}\alpha_n$ and $-\beta=:\lim_{n\to\infty}-\beta_n$ exist by assumption. If $\beta<\alpha$, then for some $n$ we would have $\beta_n<\alpha_n$, a contradiction; hence, $\alpha\le\beta$. Therefore $\cap_{i=1}^{\infty}[a_i,b_i]=[\alpha,\beta]\neq\varnothing$.

  \item[$(v)\Rightarrow(vi)$] This is the familiar \emph{Bolzano-Weierstrass Theorem} (Bartle \& Sherbert~\cite{bartle}, p. 79). 

 \item[$(vi)\Rightarrow(vii)$] Let $A\subset \mathbb{K}$ be a bounded infinite set. By the Axiom of Choice, $A$ has a denumerable subset -- that is, there exists an injection $\{x_n\}:\mathbb{N}\to A$. As $A$ is bounded, $\{x_n\}$ has a subsequence $\{x_{n_k}\}$ that converges to a point $x\in\mathbb{K}$ by assumption. Then $x$ must be a cluster point of $A$ because the sequence $\{x_{n_k}\}$ is injective, and thus not eventually constant.

  \item[$(vii)\Rightarrow(viii)$] For the index set we can assume that $I=\mathbb N$ since cofinality of any Archimedean set is $\aleph_0=\card(\mathbb N)$. Let $\{x_n\}$ be a Cauchy sequence in $\mathbb{K}$. Then $\mathrm{range}(\{x_n\})$ is a bounded set. If $\mathrm{range}(\{x_n\})$ is finite, then $\{x_n\}$ is eventually constant (and thus convergent). Otherwise, $\mathrm{range}(\{x_n\})$ has a cluster point $L$ by assumption. To show that $\{x_n\}\to L$, let $\epsilon\in\mathbb{K}_+$ and $N\in\mathbb{N}$ be such that $n,m\ge N$ implies that $|x_n-x_m|<\frac{\epsilon}{2}$. Observe that the set $\{n\in\mathbb{N} : |x_n-L|<\frac{\epsilon}{2} \}$ is infinite because $L$ is a cluster point, so that $A=:\{n\in\mathbb{N} : |x_n-L|<\frac{\epsilon}{2}, n\ge N\}$ is non-empty. Let $M=:\min A$. Then, for $n\ge M$, we have $|x_n-L|\le|x_n-x_M|+|x_M-L|<\epsilon$, as required.
  
  \item[$(viii)\Rightarrow(ix)$] This proof can be found in (Hewitt \& Stromberg~\cite{hewitt}, p. 44).

  \item[$(ix)\Rightarrow(x)$] Let $\mathbb{A}$ be a totally ordered Archimedean field extension of $\mathbb{K}$. We have to show that $\mathbb{A}=\mathbb{K}$. Recall that $\mathbb{Q}$ is dense in $\mathbb{A}$ as it is Archimedean; hence, the set $\{q\in \mathbb{Q} : q<a\}$ is non-empty and bounded from above in $\mathbb{K}$ for all $a\in\mathbb{A}$. Consequently, the mapping 
$\sigma:\mathbb{A}\to\mathbb{K}$, where $\sigma(a)=:\sup_\mathbb{K}\{q\in\mathbb{Q} : q<a\}$, is well-defined by our assumption. Note that $\sigma$ fixes $\mathbb{K}$. To show that $\mathbb{A}=\mathbb{K}$ we will show that $\sigma$ is just the identity map. Suppose (to the contrary) that $\mathbb{A}\neq\mathbb{K}$ and let $a\in\mathbb{A}\setminus\mathbb{K}$. Then $\sigma(a)\neq a$ so that either $\sigma(a)>a$ or $\sigma(a)<a$. If it is the former, then there exists $p\in\mathbb{Q}$ such that $a<p<\sigma(a)$, contradicting the fact that $\sigma(a)$ is the \emph{least} upper bound for $\{q\in\mathbb{Q} : q<a\}$ and if it is the latter then there exists $p\in\mathbb{Q}$ such that $\sigma(a)<p<a$, contradicting the fact that $\sigma(a)$ is an upper bound for $\{q\in\mathbb{Q} : q<a\}$.  

  \item[$(x)\Rightarrow(i)$]  Let $\mathbb{D}$ be a Dedekind complete field (such a field exists by Theorem~\ref{T: Existence of Dedekind Fields}). We can assume that $\mathbb{K}$ is an ordered subfield of $\mathbb{D}$ by Theorem~\ref{T: Embedding}. Thus we have $\mathbb{K}=\mathbb{D}$ by assumption, since $\mathbb D$ is Archimedean. Now suppose, to the contrary, that there is an infinite cardinal $\kappa$ and a family $[a_i,b_i]_{i\in I}$ of fewer than $\kappa$ closed bounded intervals in $\mathbb K$ with the finite intersection property such that $\bigcap_{i\in I}[a_i,b_i]=\varnothing$.  Because $[a_i,b_i]$ satisfies the finite intersection property, the set $A=:\{a_i :i\in I\}$ is bounded from above. Indeed, for any $i\in I$, if there were a $j\in I$ such that $a_j>b_i$ then $[a_j,b_j]\cap [a_i,b_i]=\varnothing$ contradicting the F.I.P., thus $b_i$ is an upper bound of $A$. As well, $A$ is non-empty so that $c=:\sup(A)$ exists in $\mathbb{D}$, and $a_i\le c\le b_i$ for all $i\in I$ so that $c\in\mathbb{D}\setminus\mathbb{K}$. Therefore $\mathbb{D}$ is a proper field extension of $\mathbb{K}$, a contradiction.
\end{description}
\end{proof}
\begin{remark}
  It should be noted that the equivalence of $(ii)$ and $(ix)$ above is proved in Keisler (\cite{jKeislerF}, p. 17-18) with somewhat different arguments. Also, the equivalence of $(ix)$ and $(x)$ is proved in Banaschewski~\cite{bana} using a different method than ours (with the help of the axiom of choice).
\end{remark}

\section{Completeness of a Non-Archimedean Field}\label{S: Completeness of a Non-Archimedean Field}

In this section, we discuss how removing the assumption that $\mathbb K$ is Archimedean affects our result from the previous section. In particular, several of the forms of completeness listed in Definition~\ref{D: Completeness of a Totally Ordered Field} no longer hold, and those that do are no longer equivalent. 

\begin{theorem}\label{T: completeness -> archimedean}
   Let $\mathbb K$ be an ordered field satisfying any of the following:
   \begin{T-enum}
      \item Bolzano completeness.
      \item Weierstrass completeness.
      \item Monotone completeness.
      \item Dedekind completeness
      \item Cantor $\kappa$-completeness for $\kappa>\card(\mathbb K)$.
      \item Leibniz completeness (in the sense that every finite number can be decomposed \emph{uniquely} into the sum of an element of $\mathbb K$ and an infinitesimal).
   \end{T-enum}
 Then $\mathbb K$ is Archimedean. Consequently, if $\mathbb K$ is non-Archimedean, then each of (i)-(vi) is false. 
\end{theorem}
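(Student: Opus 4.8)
The plan is to establish each implication (i)--(vi) $\Rightarrow$ ``$\mathbb K$ is Archimedean'' separately, reducing most cases to the contrapositive: assuming $\mathbb K$ is non-Archimedean, Theorem~\ref{T: Archimedean Property} gives $\mathcal L(\mathbb K)\neq\varnothing$, so $\mathbb N\subseteq\mathbb K$ is bounded above (by any $|\lambda|$ with $\lambda\in\mathcal L(\mathbb K)$) and also $\mathcal I(\mathbb K)\neq\{0\}$; these two facts are the engine for all six arguments. For (iv), Dedekind completeness, the result is already Theorem~\ref{T: Dedekind}, so I would simply cite it. For (iii), monotone completeness: the sequence $x_n=n$ is increasing and bounded above, so it converges to some $\alpha$; then $\alpha-1<n$ for large $n$, hence $\alpha<n+1$, contradiction --- this is essentially the proof of Theorem~\ref{T: Dedekind} transcribed to sequences.

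For (i) Bolzano and (ii) Weierstrass completeness, I would again use $\{n:n\in\mathbb N\}\subseteq\mathbb K$: as a bounded infinite set it must have a cluster point $c$ (Bolzano case), but distinct naturals are separated by distance $\geq 1$, so no point can have naturals arbitrarily close to it --- contradiction. For Weierstrass, the bounded sequence $(n)_{n\in\mathbb N}$ would have a convergent (hence Cauchy) subsequence, impossible since consecutive terms of any subsequence differ by at least $1$. For (v), Cantor $\kappa$-completeness with $\kappa>\card(\mathbb K)$: the key observation is that this form of completeness implies Dedekind completeness --- indeed this is exactly the implication $(i)\Rightarrow\cdots\Rightarrow(ix)$ packaged inside Theorem~\ref{T: Completeness of an Archimedean Field}, but that theorem assumes Archimedean, so instead I would argue directly. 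Given a nonempty $A\subseteq\mathbb K$ bounded above, consider the family of intervals $[a,b]$ with $a\in A$ and $b\in\ub(A)$; this family has the finite intersection property and has cardinality $\leq\card(\mathbb K)<\kappa$, so its intersection is nonempty and any point of it is readily checked to be $\sup(A)$. Hence $\mathbb K$ is Dedekind complete, and (iv) applies.

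For (vi) Leibniz completeness, I would argue directly from the definition given in the theorem statement (decomposition of finite elements inside $\mathbb K$ itself, not in a nonstandard extension): if $\mathbb K$ is non-Archimedean, pick a nonzero infinitesimal $\delta\in\mathcal I(\mathbb K)$. Then $\delta\in\mathcal F(\mathbb K)$, and both $\delta=0+\delta$ and $\delta=0+\delta$ — the point is to exhibit two genuinely different decompositions, so instead take any finite $x$; if $x=r+dx=r'+dx'$ with $r,r'\in\mathbb K$, $dx,dx'$ infinitesimal, then $r-r'=dx'-dx$ is an infinitesimal element of $\mathbb K$; non-Archimedean-ness provides a nonzero such element, so choosing $dx'=dx+\delta$, $r'=r-\delta$ gives a second decomposition of the same $x$, contradicting uniqueness. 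Thus uniqueness forces $\mathcal I(\mathbb K)=\{0\}$, i.e.\ $\mathbb K$ is Archimedean by Theorem~\ref{T: Archimedean Property}. The final sentence of the theorem is then just the contrapositive of the conjunction. The only mildly delicate point --- the ``main obstacle'' --- is case (v), where one must be careful that the cardinality bound on the interval family is genuinely $\leq\card(\mathbb K)$ and that Cantor $\kappa$-completeness is being invoked with the correct parameter; everything else is routine once $\mathcal L(\mathbb K)\neq\varnothing$ is in hand.
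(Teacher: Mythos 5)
Your proposal is correct, and it is actually more complete than what the paper prints: the paper proves \emph{only} case (vi) and explicitly leaves (i)--(v) to the reader. For (vi) your argument coincides with the paper's --- a nonzero $\delta\in\mathcal I(\mathbb K)$ lets you shift any decomposition $x=r+dx$ to $x=(r-\delta)+(dx+\delta)$, killing uniqueness; the paper runs this with $\alpha\in\mathcal F(^*\mathbb K)$ and $\mathcal I(^*\mathbb K)$, using $\mathcal I(\mathbb K)\subseteq\mathcal I(^*\mathbb K)$, but the mechanism is identical to yours. Your treatments of (i), (ii) and (iv) are sound. Two remarks on the remaining cases. In (iii) your phrase ``$\alpha-1<n$ \dots hence $\alpha<n+1$, contradiction'' silently uses that the limit $\alpha$ of an increasing sequence is an upper bound for its terms; that is a true and standard fact in any ordered field (if $x_m>\alpha$, take $\varepsilon=x_m-\alpha$ to contradict monotonicity), but it should be stated --- alternatively, just pick two consecutive terms within $1/3$ of $\alpha$ and contradict $|n-(n+1)|=1$. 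In (v) you route through Dedekind completeness via the family $\{[a,b]:a\in A,\ b\in\ub(A)\}$, which is clean and correct (the F.I.P.\ holds because $\max_i a_i$ lies in every interval of a finite subfamily, and the cardinality is at most $\card(\mathbb K)^2=\card(\mathbb K)<\kappa$); the paper instead points to Theorem~\ref{T: Cardinality and Cantor Completeness}, whose proof mimics $(i)\Rightarrow(ii)$ of Theorem~\ref{T: Completeness of an Archimedean Field} and produces a contradiction with an infinitely large element directly. Both routes work; yours has the advantage of reusing Theorem~\ref{T: Dedekind} rather than re-running the interval argument.
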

\begin{proof}
   We will only prove the case for Leibniz completeness and leave the rest to the reader.

  Suppose, to the contrary, that $\mathbb K$ is non-Archimedean. Then there exists a $dx\in\mathcal I(\mathbb K)$ such that $dx\neq0$ by Theorem~\ref{T: Archimedean Property}. Now take $\alpha\in\mathcal F(^*\mathbb K)$ arbitrarily. By assumption there exists unique $k\in\mathbb K$ and $d\alpha\in\mathcal I(^*\mathbb K)$ such that $\alpha=k+d\alpha$. However, we know that $dx\in\mathcal I(^*\mathbb K)$ as well because $\mathbb{K}\subset{^*\mathbb{K}}$ and the ordering in $^*\mathbb{K}$ extends that of $\mathbb{K}$. Thus $(k+dx)+(d\alpha-dx)=k+d\alpha=\alpha$ where $k+dx\in\mathbb K$ and $d\alpha-dx\in\mathcal I(^*\mathbb K)$. This contradicts the uniqueness of $k$ and $d\alpha$. Therefore $\mathbb K$ is Archimedean.
\end{proof}

Recall that $\kappa^+$ stands for the successor of $\kappa$, $\aleph_1=\aleph_0^+$ and $\aleph_0=\card(\mathbb N)$.

\begin{theorem}[Cardinality and Cantor Completeness]\label{T: Cardinality and Cantor Completeness}
  Let $\mathbb K$ be an ordered field. If $\mathbb K$ is non-Archimedean and Cantor $\kappa$-complete (see Definition~\ref{D: Completeness of a Totally Ordered Field}), then $\kappa\le\card(\mathbb K)$.
\end{theorem}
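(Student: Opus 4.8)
The plan is to establish the contrapositive in a slightly sharpened form: \emph{if $\mathbb K$ is non-Archimedean, then there is a family of at most $\card(\mathbb K)$ closed bounded intervals in $\mathbb K$ having the finite intersection property but empty intersection.} Granting this, suppose $\mathbb K$ is non-Archimedean and Cantor $\kappa$-complete; if we had $\kappa>\card(\mathbb K)$, this family would consist of fewer than $\kappa$ intervals with the F.I.P., hence would be forced to have a non-empty intersection — a contradiction. Therefore $\kappa\le\card(\mathbb K)$.

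To build the family, I would first record the witnesses. Put $\mathcal L_+(\mathbb K)=:\{x\in\mathcal L(\mathbb K):x>0\}$. Since $\mathbb K$ is non-Archimedean, Theorem~\ref{T: Archimedean Property} gives $\mathcal L(\mathbb K)\neq\varnothing$, and since $|\lambda|\in\mathcal L(\mathbb K)$ and $|\lambda|>0$ for any $\lambda\in\mathcal L(\mathbb K)$, we get $\mathcal L_+(\mathbb K)\neq\varnothing$. Then consider
\[
  \mathcal C =: \bigl\{\,[n,\ell]\ :\ n\in\mathbb N,\ \ell\in\mathcal L_+(\mathbb K)\,\bigr\}.
\]
For every such pair we have $n<\ell$ (by the definition of $\mathcal L$), so each $[n,\ell]$ is a genuine non-empty closed bounded interval; moreover $\card(\mathcal C)\le\aleph_0\cdot\card(\mathcal L_+(\mathbb K))\le\aleph_0\cdot\card(\mathbb K)=\card(\mathbb K)$, the last step because $\mathbb Q\hookrightarrow\mathbb K$ forces $\mathbb K$ to be infinite.

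It remains to verify the two properties of $\mathcal C$. For the finite intersection property, given $[n_1,\ell_1],\dots,[n_k,\ell_k]\in\mathcal C$ one sets $N=\max_i n_i$ and $L=\min_i\ell_i$; a finite minimum of positive infinitely large elements is again positive infinitely large, so $L\in\mathcal L_+(\mathbb K)$ and $N<L$, hence $\bigcap_{i=1}^k[n_i,\ell_i]=[N,L]\neq\varnothing$. For the emptiness of $\bigcap\mathcal C$, suppose $x\in\bigcap\mathcal C$. Then $x\ge n$ for all $n\in\mathbb N$, so $x>0$ and $x\in\mathcal L_+(\mathbb K)$; but then also $x/2\in\mathcal L_+(\mathbb K)$ (because $x>2n$ for every $n$), so $[1,x/2]\in\mathcal C$, forcing $x\le x/2$, i.e.\ $x\le 0$ — contradicting $x>0$. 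Hence $\bigcap\mathcal C=\varnothing$, which completes the plan.

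I do not anticipate a real obstacle here; the only point that needs a little thought is the choice of upper endpoints. Using \emph{all} of $\mathcal L_+(\mathbb K)$, rather than a single infinitely large number, is exactly what makes the ``$x\le x/2$'' collapse available while keeping the index set of cardinality $\le\card(\mathbb K)$; the remaining facts (closure of $\mathcal L_+(\mathbb K)$ under halving and under finite minima, and $n<\ell$ for $n\in\mathbb N$ and $\ell\in\mathcal L_+(\mathbb K)$) are immediate from Definition~\ref{D: Infinitesimals, etc.} and Theorem~\ref{T: Archimedean Property}.
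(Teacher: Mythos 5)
Your proof is correct, and it takes a genuinely different route from the paper's. The paper's proof is a one-line pointer to the argument of $(i)\Rightarrow(ii)$ in Theorem~\ref{T: Completeness of an Archimedean Field}: there one fixes a finite $x\in\mathcal F({}^*\mathbb K)$, forms the family $\{[a,b]:a,b\in\mathbb K,\ a\le x\le b\}$ of at most $\card(\mathbb K)$ closed bounded intervals with the F.I.P., and uses Cantor $\kappa$-completeness for $\kappa>\card(\mathbb K)$ to produce a standard part of $x$ in $\mathbb K$; this Leibniz-type completeness is then incompatible with $\mathbb K$ being non-Archimedean (Theorem~\ref{T: completeness -> archimedean}). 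So the paper routes the argument through the non-standard extension ${}^*\mathbb K$. You instead stay entirely inside $\mathbb K$ and exhibit an explicit witness family $\{[n,\ell]:n\in\mathbb N,\ \ell\in\mathcal L_+(\mathbb K)\}$ of at most $\card(\mathbb K)$ closed bounded intervals with the F.I.P. and empty intersection, the emptiness coming from the clean observation that any point of the intersection would be a positive infinitely large $x$ satisfying $x\le x/2$. Your version is more elementary and self-contained --- no ultrafilter, no ${}^*\mathbb K$, no appeal to uniqueness of the infinitesimal decomposition --- and it isolates exactly what non-Archimedean-ness contributes: a $\card(\mathbb K)$-indexed ``gap'' between $\mathbb N$ and $\mathcal L_+(\mathbb K)$ that Cantor completeness beyond $\card(\mathbb K)$ would have to fill. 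What the paper's route buys is economy (it recycles an argument already written out) and a visible link between Cantor completeness above $\card(\mathbb K)$ and the existence of standard parts. The auxiliary facts you use (non-emptiness of $\mathcal L_+(\mathbb K)$, its closure under halving and finite minima, and $\aleph_0\cdot\card(\mathbb K)=\card(\mathbb K)$) are all immediate, as you say.
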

\begin{proof}
   This is essentially the proof of $(i)\Rightarrow(ii)$ in Theorem~\ref{T: Completeness of an Archimedean Field}.
\end{proof}

	The next result (unpublished) is due to Hans Vernaeve.

\begin{theorem}[Cofinality and Saturation]\label{T: Cofinality and Saturation}
  Let $\mathbb K$ be an ordered field and $\kappa$ be an uncountable cardinal. Then the following are equivalent:
  \begin{T-enum}
    \item $\mathbb K$ is algebraically $\kappa$-saturated.
    \item $\mathbb K$ is Cantor $\kappa$-complete and $\kappa\leq\cof(\mathbb K)$.
  \end{T-enum}
\end{theorem}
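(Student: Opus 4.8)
The plan is to prove the two implications separately, exploiting the near-identity between open-interval families and closed-interval families in an ordered field. For $(i)\Rightarrow(ii)$: first I would show Cantor $\kappa$-completeness. Given a family $\{[a_\gamma,b_\gamma]\}_{\gamma\in\Gamma}$ of fewer than $\kappa$ closed bounded intervals with the F.I.P., I want to replace each closed interval by a slightly larger open interval while preserving the F.I.P.\ and the emptiness of the total intersection, so that $\kappa$-saturation applies. The natural move is to pick, for each $\gamma$, open intervals $(a_\gamma-\varepsilon,b_\gamma+\varepsilon)$; but uniform $\varepsilon$ is awkward in a non-Archimedean field, so instead I would argue directly: if $\bigcap_\gamma[a_\gamma,b_\gamma]=\varnothing$, consider the family of open intervals $\{(a_\gamma-1,b_\gamma+1)\}$ together with all the complementary open rays — actually the cleaner route is to observe that the closed interval $[a,b]$ equals the intersection of the decreasing open intervals $(a-1/n,b+1/n)$ only in the Archimedean case, so I should instead deduce Cantor $\kappa$-completeness from $\kappa$-saturation by a compactness/F.I.P.\ argument on the open intervals $(a_\gamma - |a_\gamma| - 1,\ b_\gamma + |b_\gamma| + 1)$ — no. Let me state the real plan: given closed intervals with F.I.P.\ and empty intersection, the open intervals $(a_\gamma,b_\gamma)$ need not have F.I.P., so the correct device is to enlarge: since $\card(\Gamma)<\kappa$ and $\kappa$ is a cardinal, for each pair $\gamma,\delta$ the overlap $[\,\max(a_\gamma,a_\delta),\min(b_\gamma,b_\delta)\,]$ is nonempty, hence I can choose a point $c_{\gamma\delta}$ in it and use these to manufacture open intervals with F.I.P.\ whose intersection is still empty; applying algebraic $\kappa$-saturation yields a contradiction. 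Then for $\kappa\le\cof(\mathbb K)$: suppose $\cof(\mathbb K)=\lambda<\kappa$, so there is an unbounded set $\{d_\alpha\}_{\alpha<\lambda}$; then the open intervals $(d_\alpha,\infty)$ — rewritten as genuine bounded open intervals using a point above, impossible since the set is unbounded — so instead use $\{(-\infty,d_\alpha)\}$, which has the F.I.P.\ (fewer than $\kappa$ of them, and any finite subfamily is a ray) but empty intersection since the $d_\alpha$ are cofinal, contradicting $\kappa$-saturation. This forces $\kappa\le\cof(\mathbb K)$.

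For $(ii)\Rightarrow(i)$: assume $\mathbb K$ is Cantor $\kappa$-complete and $\kappa\le\cof(\mathbb K)$, and let $\{(a_\gamma,b_\gamma)\}_{\gamma\in\Gamma}$ be a family of fewer than $\kappa$ open intervals with the F.I.P. I want to produce a point in the common intersection. The idea is to pass to closed intervals. Fix any $\gamma_0$; by the F.I.P.\ and $\card(\Gamma)<\kappa\le\cof(\mathbb K)$, the set $A=\{a_\gamma:\gamma\in\Gamma\}$ of left endpoints is bounded above (e.g.\ by $b_{\gamma_0}$ up to finitely many exceptions — more carefully, any $a_\gamma<b_\delta$ for all $\delta$ by F.I.P.\ applied to pairs) and, crucially, $A$ is \emph{not} cofinal in $\mathbb K$ since $\card(A)\le\card(\Gamma)<\cof(\mathbb K)$, so $A$ has a strict upper bound $u$; similarly $\{b_\gamma\}$ has a strict lower bound $\ell$. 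This lets me replace each $(a_\gamma,b_\gamma)$ by a nonempty closed interval $[a_\gamma',b_\gamma']$ sitting strictly inside it: choose $a_\gamma'\in(a_\gamma,\min(b_\gamma,u'))$ and $b_\gamma'\in(\max(a_\gamma,\ell'),b_\gamma)$ coherently. The surgery must be done so the closed family retains the F.I.P.; here is where the cofinality hypothesis does the work, guaranteeing the shrinking is uniform enough. Then Cantor $\kappa$-completeness gives a point in $\bigcap[a_\gamma',b_\gamma']\subseteq\bigcap(a_\gamma,b_\gamma)$.

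The main obstacle is the bookkeeping in both directions for converting between open and closed intervals while preserving the finite intersection property — in a non-Archimedean field one cannot shrink or enlarge by a fixed $1/n$, so the enlargement/shrinkage must be engineered pointwise using a carefully chosen common "buffer," and that buffer exists precisely because $\card(\Gamma)<\kappa\le\cof(\mathbb K)$ prevents the relevant endpoint-sets from being cofinal. I expect the cleanest implementation is: for $(i)\Rightarrow(ii)$, handle the Cantor-completeness part by noting $[a_\gamma,b_\gamma]=\bigcap_{c<a_\gamma,\,d>b_\gamma}(c,d)$ and feeding the combined family of all such open intervals (still fewer than $\kappa$ after a cardinality bound, or handled via a finite-character argument) into saturation; and for $(ii)\Rightarrow(i)$, use the non-cofinality of endpoint sets to find strict bounds $u,\ell$ and then shrink. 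Once the interval-conversion lemmas are in place, both implications close in a line. I would also remark that this theorem explains why, in Definition~\ref{D: Completeness of a Totally Ordered Field}, "Cantor complete" ($\kappa=\cof(\mathbb K)^+$) is the natural self-referential notion: it is exactly algebraic $\cof(\mathbb K)^+$-saturation combined with the automatic inequality $\cof(\mathbb K)^+\le\cof(\mathbb K)^+$.
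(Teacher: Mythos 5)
Your architecture is the same as the paper's --- convert between open- and closed-interval families, and get the cofinality bound from a family of rays --- but at each of the three decisive points the step you would need is either missing or wrong. For the Cantor-completeness half of $(i)\Rightarrow(ii)$, your ``cleanest implementation'' via $[a_\gamma,b_\gamma]=\bigcap_{c<a_\gamma,\,d>b_\gamma}(c,d)$ does not go through: that combined family has cardinality comparable to $\card(\mathbb K)$, and it cannot be thinned to fewer than $\kappa$ intervals with the same intersection, because recovering $[a_\gamma,b_\gamma]$ exactly requires the chosen $c$'s to be cofinal in $(-\infty,a_\gamma)$, which (via $x\mapsto 1/x$) needs at least $\cof(\mathbb K)$ of them --- and saturation forces $\cof(\mathbb K)\ge\kappa$, so this is never fewer than $\kappa$. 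The observation you are missing is much simpler and needs no enlargement at all: if $\{[a_\gamma,b_\gamma]\}$ has the F.I.P., then either $a_k=b_p$ for some $k,p\in\Gamma$, in which case the intersection is the singleton $\{a_k\}$ outright, or else every finite intersection $[\max_i a_{\gamma_i},\min_i b_{\gamma_i}]$ is nondegenerate and the open intervals \emph{with the same endpoints} $\{(a_\gamma,b_\gamma)\}$ already have the F.I.P., so $\kappa$-saturation applies directly. For the cofinality claim you settle on the wrong rays: if $\{d_\alpha\}$ is unbounded above, then $\bigcap_\alpha(-\infty,d_\alpha)$ is the set of strict lower bounds of $\{d_\alpha\}$ and is typically nonempty (take $d_\alpha=\alpha\in\mathbb N\subset\mathbb R$; then $-1$ lies in every $(-\infty,d_\alpha)$). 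The correct family is $\{(d_\alpha,\infty)\}$; equivalently, for any $A$ with $\card(A)<\kappa$, saturation puts a point in $\bigcap_{a\in A}(a,\infty)$, i.e., an upper bound of $A$, so no set of size less than $\kappa$ is unbounded.

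For $(ii)\Rightarrow(i)$ the shrinking idea is right, but a strict upper bound $u$ of the left endpoints and a strict lower bound $\ell$ of the right endpoints are not the objects that make the surgery coherent: already in $\mathbb R$ with the intervals $(0,1/n)$ the left-endpoint set is $\{0\}$ and any $u>0$ is a strict upper bound of it, yet no such $u$ lies in all the intervals. What you need is a single $\rho>0$ with $\rho\le b_l-a_k$ for all $k,l\in\Gamma$, and it comes not from non-cofinality of the endpoint sets but from boundedness of the set of \emph{reciprocal gap lengths}: $\{1/(b_l-a_k): k,l\in\Gamma\}$ consists of positive elements and has cardinality $<\kappa\le\cof(\mathbb K)$, hence is bounded above by some $1/\rho$; then $\{[a_\gamma+\rho/2,\,b_\gamma-\rho/2]\}_{\gamma\in\Gamma}$ is a family of nonempty closed intervals with the F.I.P., and Cantor $\kappa$-completeness finishes. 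Finally, your closing remark is backwards: for $\kappa=\cof(\mathbb K)^+$ the side condition $\kappa\le\cof(\mathbb K)$ is \emph{false}, not automatic --- which is precisely why a Cantor complete field need not be algebraically saturated (the distinction exploited in Lemma~\ref{L: cantor -> sequential}).
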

\begin{proof}
  \mbox{}
  \begin{description}
    \item[$(i)\Rightarrow(ii)$] Let $\mathcal C=:\{[a_{\gamma},b_{\gamma}]\}_{\gamma\in \Gamma}$ and $\mathcal O=:\{(a_{\gamma}, b_{\gamma})\}_{\gamma\in \Gamma}$ be families of fewer than $\kappa$ bounded closed and open intervals, respectively, where $\mathcal C$ has the finite intersection property. If $a_k=b_p$ for some $k,p\in\Gamma$, then  $\bigcap_{\gamma\in \Gamma}[a_{\gamma},b_{\gamma}]=\{a_k\}$ by the finite intersection property in $\mathcal C$. Otherwise, $\mathcal O$ has the finite intersection property; thus, there exists $\alpha\in\bigcap_{\gamma\in \Gamma} (a_{\gamma}, b_{\gamma})\subseteq\bigcap_{\gamma\in \Gamma}[a_{\gamma}, b_{\gamma}]$ by algebraic $\kappa$-saturation. Hence $\mathbb K$ is Cantor $\kappa$-complete. To show that the cofinality of $\mathbb K$ is greater than or equal to $\kappa$, let $A\subset\mathbb K$ be a set with $\card(A)<\kappa$. Then $\bigcap_{a\in A}(a,\infty)\neq\emptyset$ by algebraic $\kappa$-saturation.

    \item[$(ii)\Rightarrow(i)$] Let $\{(a_{\gamma},b_{\gamma})\}_{\gamma\in \Gamma}$ be a family of fewer than $\kappa$ open intervals with the finite intersection property. Without loss of generality, we can assume that each interval is bounded. As $\cof(\mathbb K)\ge\kappa$, there exists $\frac{1}{\rho} \in \ub(\{ \frac{1}{b_l -a_k} : l, k\in \Gamma \})$ (that is, $\frac{1}{b_l-a_k}\le\frac{1}{\rho}$ for all $l,k\in \Gamma$) which implies that $\rho>0$ and that $\rho$ is a lower bound of $\{b_l-a_k : l, k\in \Gamma\}$. Next, we show that the family $\{[a_\gamma+\frac{\rho}{2},b_\gamma-\frac{\rho}{2}]\}_{\gamma\in\Gamma}$ satisfies the finite intersection property. Let $\gamma_1,\ldots,\gamma_n\in\Gamma$ and $\zeta=:\max_{k\le n}\{a_{\gamma_k} + \frac{\rho}{2}\}$. Then, for all $m\in\mathbb N$ such that $m\le n$, we have $a_{\gamma_m} + \frac{\rho}{2}\le \zeta \le b_{\gamma_m} - \frac{\rho}{2}$ by the definition of $\rho$; thus, $\zeta\in[a_{\gamma_m}+\frac{\rho}{2}, b_{\gamma_m}-\frac{\rho}{2}]$ for $m\le n$. By Cantor $\kappa$-completeness, there exists $\alpha\in\bigcap_{\gamma\in\Gamma} [a_{\gamma}+\frac{\rho}{2}, b_{\gamma}-\frac{\rho}{2}]\subseteq\bigcap_{\gamma\in\Gamma}(a_{\gamma},b_{\gamma})$.
  \end{description}
\end{proof}

\begin{lemma}\label{L: sat -> seq}
  Let $\mathbb K$ be an ordered field. If $\mathbb K$ is algebraically $\aleph_1$-saturated, then $\mathbb K$ is sequentially complete.
\end{lemma}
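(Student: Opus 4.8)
The plan is to prove the sharper statement that \emph{every fundamental sequence in $\mathbb K$ is eventually constant}; since an eventually constant sequence converges, sequential completeness follows immediately. The whole point is a direct application of the saturation hypothesis, with no need to invoke Theorem~\ref{T: Cofinality and Saturation}. (This is consistent with $\mathbb R$ being \emph{not} $\aleph_1$-saturated --- the family $\{(0,1/n)\}_{n\in\mathbb N}$ has the finite intersection property but empty intersection there --- while $\mathbb R$ of course does admit non-eventually-constant Cauchy sequences.)

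Concretely, I would take a fundamental sequence $(x_n)_{n\in\mathbb N}$ in $\mathbb K$ and argue by contradiction, assuming it is not eventually constant. Then the set $D=:\{n\in\mathbb N : x_{n+1}\neq x_n\}$ must be infinite, since if $D$ were bounded by some $N$ an immediate induction would force $x_n=x_N$ for all $n\ge N$. Enumerating $D$ as $n_1<n_2<\cdots$, I would set $d_k=:|x_{n_k+1}-x_{n_k}|$, so that $d_k\in\mathbb K_+$ for every $k$.

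Next I would feed the countable family $\{(0,d_k)\}_{k\in\mathbb N}$ of open intervals of $\mathbb K$ to the saturation hypothesis. This family has the finite intersection property, because $\bigcap_{k=1}^{p}(0,d_k)=(0,\min\{d_1,\dots,d_p\})\neq\varnothing$; and it has fewer than $\aleph_1$ members, so algebraic $\aleph_1$-saturation produces some $\varepsilon\in\bigcap_{k\in\mathbb N}(0,d_k)$, i.e.\ $0<\varepsilon<d_k$ for all $k$. Applying the definition of a fundamental sequence to this particular $\varepsilon\in\mathbb K_+$ gives an $N\in\mathbb N$ with $|x_i-x_j|<\varepsilon$ whenever $i,j\ge N$; picking $k$ with $n_k\ge N$, both $n_k$ and $n_k+1$ exceed $N$, so $d_k=|x_{n_k+1}-x_{n_k}|<\varepsilon<d_k$, a contradiction. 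Hence $(x_n)$ is eventually constant, and $\mathbb K$ is sequentially complete.

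The genuinely nontrivial choice --- and the main thing to get right --- is which countable family of intervals to hand to saturation. The naive route, covering the tail of $(x_n)$ by intervals $(x_{N(1/m)}-1/m,\,x_{N(1/m)}+1/m)$, does yield a candidate limit $L$ via saturation, but it only gives $|x_n-L|<1/m$ eventually for each $m\in\mathbb N$, which is \emph{not} enough for convergence once $\mathbb K$ has nonzero infinitesimals. The family $\{(0,d_k)\}$ instead lets saturation manufacture a single positive $\varepsilon$ lying below \emph{all} the consecutive increments at once, and it is exactly such an $\varepsilon$ that the (strong, non-Archimedean) Cauchy condition cannot tolerate.
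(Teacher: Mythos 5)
Your proof is correct and is essentially the paper's own argument: the authors likewise look at the consecutive differences $\delta_n=|x_n-x_{n+1}|$, extract the subsequence of nonzero ones, and apply $\aleph_1$-saturation to the family $\{(0,\delta_{n_k})\}$ to produce a positive $\varepsilon$ below all of them, contradicting the Cauchy condition; hence the sequence is eventually constant. Your write-up merely spells out the final contradiction in more detail than the paper does.
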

\begin{proof}
   Let $\{x_n\}$ be a Cauchy sequence in $\mathbb K$. Define $\{\delta_n\}$ by $\delta_n=|x_n-x_{n+1}|$. If $\{\delta_n\}$ is not eventually zero, then there is a subsequence $\{\delta_{n_k}\}$ such that $\delta_{n_k}>0$ for all $k$; however, this yields $\bigcap_k (0,\delta_{n_k})\neq\varnothing$, which contradicts $\delta_n\to 0$. Therefore $\{\delta_n\}$ is eventually zero so that $\{x_n\}$ is eventually constant.
\end{proof}

\begin{lemma}\label{L: cantor -> sequential}
  Let $\mathbb K$ be a Cantor complete ordered field which is not algebraically saturated. Then $\mathbb K$ is sequentially complete.
\end{lemma}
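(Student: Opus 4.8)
The plan is to split on the cofinality of $\mathbb K$, since the two regimes behave quite differently: when $\cof(\mathbb K)>\aleph_0$ the conclusion falls out of machinery already in hand, whereas the case $\cof(\mathbb K)=\aleph_0$ needs a direct nested-interval argument. (The case split is genuinely forced on us: Theorem~\ref{T: Cofinality and Saturation} requires an uncountable cardinal, so it is useless when $\cof(\mathbb K)=\aleph_0$.)

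First I would dispose of the case $\cof(\mathbb K)\ge\aleph_1$. Here ``Cantor complete'' means Cantor $\cof(\mathbb K)^+$-complete, and since $\aleph_1\le\cof(\mathbb K)<\cof(\mathbb K)^+$ this a fortiori forces Cantor $\aleph_1$-completeness. Combined with $\aleph_1\le\cof(\mathbb K)$, Theorem~\ref{T: Cofinality and Saturation} applied with $\kappa=\aleph_1$ yields that $\mathbb K$ is algebraically $\aleph_1$-saturated, and then Lemma~\ref{L: sat -> seq} gives sequential completeness at once. This part is essentially a two-line deduction.

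The substantive case is $\cof(\mathbb K)=\aleph_0$, where ``Cantor complete'' is exactly ``sequentially Cantor complete''. I would fix an increasing sequence $(a_k)$ in $\mathbb K_+$ that is cofinal in $\mathbb K$ (available precisely because the cofinality is $\aleph_0$) and set $r_k=1/a_k$, so that $(r_k)$ decreases and is coinitial in $\mathbb K_+$ (below every positive element of $\mathbb K$ there lies some $r_k$). Given a Cauchy sequence $(x_n)$, I would pick indices $N_1<N_2<\cdots$ with $|x_n-x_m|<r_k$ whenever $n,m\ge N_k$, form the bounded closed intervals $I_k=[x_{N_k}-r_k,x_{N_k}+r_k]$, and note that $x_{N_k}$ lies in each of $I_1,\dots,I_k$, since $N_k\ge N_j$ gives $|x_{N_k}-x_{N_j}|<r_j$. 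Thus the partial intersections $J_k=I_1\cap\dots\cap I_k$ form a nested sequence of nonempty bounded closed intervals, so sequential Cantor completeness supplies a point $L\in\bigcap_k J_k=\bigcap_k I_k$; the estimate $|x_n-L|<2r_k$ for $n\ge N_k$, together with coinitiality of $(2r_k)$ in $\mathbb K_+$, then shows $x_n\to L$.

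The only place the non-Archimedean setting bites --- and the step to be careful about --- is that one must resist thinning $(r_k)$ down to a geometrically decreasing sequence in order to make the $I_k$ themselves nested: in a non-Archimedean field a geometrically decreasing sequence of positive elements need not be coinitial in $\mathbb K_+$. Passing instead to the partial intersections $J_k$ sidesteps this, at the harmless cost of the factor $2$ at the end, which coinitiality absorbs. I would also point out that the hypothesis that $\mathbb K$ is \emph{not} algebraically saturated is never used in this argument; it presumably appears only to match the way the lemma is applied afterwards, and the statement in fact holds for every Cantor complete ordered field.
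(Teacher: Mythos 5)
Your proof is correct, and its central construction --- closed intervals $[x_{N_k}-r_k,\,x_{N_k}+r_k]$ built from a Cauchy sequence and a coinitial sequence of radii, shown to have the finite intersection property and then fed to Cantor completeness --- is exactly the paper's argument (the paper writes $S_n=[x_{m_n}-\epsilon_n,\,x_{m_n}+\epsilon_n]$ and verifies the F.I.P.\ by the same observation that $x_{m_\rho}$ lies in every $S_k$ with $k\le\rho$). Where you genuinely diverge is in how the unbounded increasing sequence $\{1/\epsilon_n\}$ (equivalently, $\cof(\mathbb K)=\aleph_0$) is obtained. The paper extracts it from the hypothesis that $\mathbb K$ is \emph{not} algebraically saturated via Theorem~\ref{T: Cofinality and Saturation}; you instead split on $\cof(\mathbb K)$, disposing of the uncountable-cofinality case by deriving algebraic $\aleph_1$-saturation from Cantor completeness together with $\aleph_1\le\cof(\mathbb K)$ and then invoking Lemma~\ref{L: sat -> seq}. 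Your split is arguably the more robust one: if $\aleph_0<\cof(\mathbb K)<\card(\mathbb K)$, a field can fail to be algebraically saturated while having no unbounded increasing \emph{sequence} at all, so the paper's opening step is unavailable there, whereas your Case 1 covers it. Your closing observation is also right: the non-saturation hypothesis is dispensable, and every Cantor complete ordered field is sequentially complete --- this is in effect how the paper uses the two lemmas in tandem in Theorem~\ref{T: non-arch completeness}, where the case split (there on saturation, in your version on cofinality) is performed one level up. The only cosmetic difference is your passage to the partial intersections $J_k$; the paper applies Cantor completeness directly to the non-nested family $\{S_n\}$ with the F.I.P., which its definition of Cantor completeness permits.
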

\begin{proof}
   By Theorem~\ref{T: Cofinality and Saturation}, we know there exists an unbounded increasing sequence $\{\frac{1}{\epsilon_n}\}$. Let $\{x_n\}$ be a Cauchy sequence in $\mathbb K$. For all $n\in\mathbb N$, we define $S_n=:[x_{m_n} - \epsilon_n, x_{m_n} + \epsilon_n]$, where $m_n\in\mathbb N$ is the minimal element such that $l,j\ge m_n$ implies $|x_l-x_j|<\epsilon_n$. Let $A\subset \mathbb N$ be finite and $\rho=:\max(A)$; then we observe that $x_{m_{\rho}}\in S_k$ for any $k\in A$ because $m_k\le m_{\rho}$; hence $\{S_n\}$ satisfies the finite intersection property. Therefore there exists $L\in\bigcap_{k=1}^{\infty}S_k$ by Cantor completeness. It then follows that $x_n\to L$ since $\{\epsilon_n\}\to0$.
\end{proof}

\begin{theorem}\label{T: non-arch completeness}
   Let $\mathbb K$ be an ordered field, then we have the following implications
   \begin{align*}
      \mathbb K\textrm{ is algebraically }\kappa\textrm{-saturated}
      &\Rightarrow \mathbb K \textrm{ is Cantor } \kappa\textrm{-complete}\\
      &\Rightarrow \mathbb K \textrm{ is sequentially complete.}
   \end{align*}
\end{theorem}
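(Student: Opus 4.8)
The plan is to read Theorem~\ref{T: non-arch completeness} as two independent implications (with $\kappa$ understood to be uncountable, as is forced by the very definition of Cantor $\kappa$-completeness) and to discharge each by appealing to the machinery already in place: Theorem~\ref{T: Cofinality and Saturation} and Lemmas~\ref{L: sat -> seq} and \ref{L: cantor -> sequential}.

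The first implication --- algebraic $\kappa$-saturation implies Cantor $\kappa$-completeness --- is immediate from the direction $(i)\Rightarrow(ii)$ of Theorem~\ref{T: Cofinality and Saturation}, which yields exactly Cantor $\kappa$-completeness (together with the extra information $\kappa\le\cof(\mathbb K)$, which we do not need here). So nothing new has to be proved for this half.

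For the second implication, assume $\mathbb K$ is Cantor $\kappa$-complete; since $\kappa$ is uncountable this entails, in particular, Cantor $\aleph_1$-completeness, i.e. every nested sequence of bounded closed intervals has nonempty intersection. I would then split on $\cof(\mathbb K)$, which is always at least $\aleph_0$. If $\cof(\mathbb K)\ge\aleph_1$, then $\mathbb K$ is Cantor $\aleph_1$-complete with $\aleph_1\le\cof(\mathbb K)$, so $(ii)\Rightarrow(i)$ of Theorem~\ref{T: Cofinality and Saturation} (applied with $\kappa=\aleph_1$) shows $\mathbb K$ is algebraically $\aleph_1$-saturated, whence $\mathbb K$ is sequentially complete by Lemma~\ref{L: sat -> seq}. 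If instead $\cof(\mathbb K)=\aleph_0$, then $\mathbb K$ admits an unbounded strictly increasing sequence $\{1/\epsilon_n\}$ with $\epsilon_n>0$ (so $\epsilon_n\to 0$) --- which is precisely the input produced at the start of Lemma~\ref{L: cantor -> sequential} --- and from there the nested-intervals construction of that lemma goes through unchanged: given a Cauchy sequence $\{x_n\}$, the intervals $S_n=[x_{m_n}-\epsilon_n,\,x_{m_n}+\epsilon_n]$ (with $m_n$ minimal such that $l,j\ge m_n$ implies $|x_l-x_j|<\epsilon_n$) have the finite intersection property, Cantor $\aleph_1$-completeness produces $L\in\bigcap_n S_n$, and $x_n\to L$ since $\epsilon_n\to 0$. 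In either case $\mathbb K$ is sequentially complete, which closes the chain.

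I do not expect a genuine obstacle: all the substance is already carried by Theorem~\ref{T: Cofinality and Saturation} and the two lemmas. The one step I would be careful about is the bookkeeping on $\kappa$ --- reducing Cantor $\kappa$-completeness to Cantor $\aleph_1$-completeness, and checking that every subsequent appeal uses only completeness at the $\aleph_1$ level (it does). If one would rather quote Lemma~\ref{L: cantor -> sequential} verbatim in the case $\cof(\mathbb K)=\aleph_0$ than rerun its proof, one should first observe that such a field is necessarily not algebraically saturated (an unbounded countable subset witnesses the failure) and that for it Cantor $\aleph_1$-completeness coincides with ``Cantor completeness'' as in Definition~\ref{D: Completeness of a Totally Ordered Field}; then the lemma applies directly.
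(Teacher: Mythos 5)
Your proposal is correct and follows essentially the same route as the paper: the first implication is read off from Theorem~\ref{T: Cofinality and Saturation}, and the second is settled by a dichotomy that funnels into Lemma~\ref{L: sat -> seq} and Lemma~\ref{L: cantor -> sequential}. The only cosmetic difference is that you split on $\cof(\mathbb K)$ (equal to $\aleph_0$ or at least $\aleph_1$) where the paper splits on whether $\mathbb K$ is algebraically saturated; your version spells out more explicitly how the hypotheses of each lemma are met, but the substance is identical.
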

\begin{proof}
   The first implication follows from Theorem~\ref{T: Cofinality and Saturation}. For the second we have two cases depending on whether $\mathbb K$ is algebraically saturated, which are handled by Lemmas~\ref{L: sat -> seq} and Lemma~\ref{L: cantor -> sequential}.
\end{proof}
The next two results require the generalized continuum hypothesis (GCH) in the form $2^\kappa=\kappa^+$, where $\kappa$ is a cardinal number.

\begin{theorem}[First Uniqueness Result]\label{T: First Uniqueness Result} All real closed fields (Lang~\cite{lang}, p. 451) of the same cardinality which are algebraically saturated (Definition~\ref{D: Completeness of a Totally Ordered Field}) are isomorphic. 
\end{theorem}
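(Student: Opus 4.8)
The plan is to reduce the statement to the classical back-and-forth argument showing that two saturated models of a complete first-order theory of the same cardinality are isomorphic, carried out concretely in the language of ordered fields. Write $\kappa=\card(\mathbb K_1)=\card(\mathbb K_2)$, which we may assume uncountable, since for $\kappa=\aleph_0$ the hypothesis of algebraic saturation is vacuous. By Tarski's theorem the theory of real closed fields admits quantifier elimination and is complete in the language of ordered rings; in particular it is o-minimal, a real closed field carries a unique ordering (its positive cone being the set of squares), and any embedding of one real closed field into another is elementary. Hence ``isomorphic'' may be read as ``isomorphic as ordered fields,'' and it suffices to produce an order-preserving field isomorphism $f\colon\mathbb K_1\to\mathbb K_2$.

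The first step is to observe that an algebraically $\kappa$-saturated real closed field is $\kappa$-saturated in the model-theoretic sense. By o-minimality a complete $1$-type over a parameter set $A$ with $\card(A)<\kappa$ is determined by the cut it induces on the real closure $F_A$ of the subfield generated by $A$, and $\card(F_A)=\card(A)\cdot\aleph_0<\kappa$. Such a cut $(L,U)$ is either realized inside $F_A$, hence in $\mathbb K$, or else the family $\{(l,u):l\in L,\,u\in U\}$ consists of fewer than $\kappa$ open intervals with the finite intersection property, and any point of $\bigcap_{l,u}(l,u)$ — non-empty by algebraic $\kappa$-saturation — realizes it. The passage from $1$-types to $n$-types is routine.

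Now run the back-and-forth. Fix enumerations $\mathbb K_1=\{a_\xi:\xi<\kappa\}$ and $\mathbb K_2=\{b_\xi:\xi<\kappa\}$ and build an increasing chain $(f_\xi)_{\xi<\kappa}$ of order-preserving field isomorphisms between real-closed subfields of $\mathbb K_1$ and $\mathbb K_2$ with $\card(\dom f_\xi)<\kappa$: start with the identity on the real closure of the prime field, take unions at limit stages, and at stage $\xi+1$ force $a_\xi\in\dom f_{\xi+1}$ and $b_\xi\in\ran f_{\xi+1}$. To adjoin $a_\xi$: if it is algebraic over $\dom f_\xi$ it lies in the real closure of that subfield and $f_\xi$ extends there uniquely by uniqueness of real closures; otherwise $a_\xi$ realizes a cut over $\dom f_\xi$ not realized in its real closure, the $f_\xi$-image cut is realized in $\mathbb K_2$ by some necessarily transcendental $b$ via the previous step, and $a_\xi\mapsto b$ extends $f_\xi$ to the real closures of the subfields generated by $\dom f_\xi\cup\{a_\xi\}$ and $\ran f_\xi\cup\{b\}$. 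The ``back'' half is symmetric. Since $\card(\dom f_\xi)\le\card(\xi)\cdot\aleph_0<\kappa$ at every stage, the construction never stalls, and $f=\bigcup_{\xi<\kappa}f_\xi\colon\mathbb K_1\to\mathbb K_2$ is then an order-field isomorphism by the enumerations. GCH enters only to make the hypothesis non-vacuous — for regular uncountable $\kappa$ it yields an algebraically saturated real closed field of cardinality $\kappa$, e.g.\ a $\kappa$-saturated elementary extension of $\mathbb R$ of that size — and is not needed for the uniqueness argument itself.

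The main obstacle is the one-point extension in the third step: one must check carefully that extending a partial order-preserving field isomorphism by a single element stays an isomorphism, treating separately the cases of an element algebraic and transcendental over the current domain, bookkeeping with real closures so that agreeing on the induced cut genuinely pins down the isomorphism type over the parameters, and confirming that the associated family of intervals in the target has the finite intersection property and fewer than $\kappa$ members so that algebraic $\kappa$-saturation applies. The remaining limit-stage and exhaustion bookkeeping is routine.
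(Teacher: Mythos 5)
Your back-and-forth argument is correct in outline and is essentially the proof the paper defers to: the theorem is quoted from Erd\"{o}s--Gillman--Henriksen (and Gillman \& Jerison), whose argument is exactly this transfinite back-and-forth on real closed subfields and the cuts they determine, here rephrased in the modern language of quantifier elimination and saturation, and you are right that GCH is needed only for existence, not for uniqueness. Two points worth tightening: the statement must be read with the cardinality uncountable (as you observe, for $\kappa=\aleph_0$ the saturation hypothesis is vacuous, and then the conclusion would be false), and when realizing a cut $(L,U)$ you should also treat the degenerate cases $L=\varnothing$ or $U=\varnothing$, which require $\kappa\le\cof(\mathbb K_2)$ (Theorem~\ref{T: Cofinality and Saturation}) rather than a family of bounded open intervals.
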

\begin{proof} We refer to Erd\"{o}s \&  Gillman \& Henriksen~\cite{ErdGillHenr55}
(for a presentation see also: Gillman \& Jerison~\cite{lFux63}, p. 179-185).
\end{proof}

\begin{theorem}[Second Uniqueness Result] \label{T: Second Uniqueness Result} 	Let $\kappa$ be an infinite cardinal and $\mathbb K$ be a totally ordered field with the following properties:
\begin{T-enum}

\item $\card(\mathbb K)=\kappa^+$.

\item $\mathbb K$ is a real closed field (Lang~\cite{lang}, p. 451).

\item $\mathbb K$ is Cantor complete (Definition~\ref{D: Completeness of a Totally Ordered Field}).

\item $\mathbb K$ contains $\mathbb R$ as a subfield.

\item $\mathbb K$ admits an infinitesimal scale, i.e. there exists a positive infinitesimal $s$ in $\mathbb K$ such that the sequence $(s^{-n})$ is unbounded (from above). 

\end{T-enum}
Then $\mathbb K$ is unique up to field isomorphism.
\end{theorem}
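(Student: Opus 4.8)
The plan is to deduce the result from the First Uniqueness Result (Theorem~\ref{T: First Uniqueness Result}) by passing to a canonical algebraically saturated extension. Throughout write $s$ for the infinitesimal scale furnished by (v). First I would record a few consequences of the hypotheses. By (v) the countable set $\{s^{-n}:n\in\mathbb N\}$ is cofinal in $\mathbb K$, so $\cof(\mathbb K)=\aleph_0$; in particular $\mathbb K$ is non-Archimedean (Theorem~\ref{T: Archimedean Property}) and, having countable cofinality, $\mathbb K$ is \emph{not} algebraically saturated, so Theorem~\ref{T: First Uniqueness Result} cannot be applied to $\mathbb K$ itself. Also $\mathcal F(\mathbb K)/\mathcal I(\mathbb K)$ is an Archimedean field (Theorem~\ref{T: Maximal Ideal}) which contains a copy of $\mathbb R$ by (iv), hence equals $\mathbb R$ since $\mathbb R$ is a maximal Archimedean field (Theorem~\ref{T: Completeness of an Archimedean Field}); thus $\mathbb K$ carries a standard part map onto $\mathbb R$, which will be used in the approximation step below.

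Next, using the hypothesis $2^\kappa=\kappa^+$ (which makes $\kappa^+$ a regular cardinal $\lambda$ with $\lambda^{<\lambda}=\lambda$), let $\widehat{\mathbb K}\succ\mathbb K$ be a $\kappa^+$-saturated elementary extension of cardinality $\kappa^+$. Since the theory of real closed fields is complete, $\widehat{\mathbb K}$ is real closed; and since that theory admits quantifier elimination, a family of fewer than $\kappa^+$ open intervals of $\widehat{\mathbb K}$ with the finite intersection property is a partial $1$-type over a parameter set of size $<\kappa^+$, hence realized by saturation. Therefore $\widehat{\mathbb K}$ is algebraically $\kappa^+$-saturated, i.e. algebraically saturated, of cardinality $\kappa^+$, so by Theorem~\ref{T: First Uniqueness Result} its isomorphism type does not depend on $\mathbb K$. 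In $\widehat{\mathbb K}$ the element $\rho:=s$ is still a positive infinitesimal; form the convex subring $\mathcal M_\rho=\{x\in\widehat{\mathbb K}:|x|\le\rho^{-n}\text{ for some }n\in\mathbb N\}$, its maximal (convex, prime) ideal $\mathcal N_\rho=\{x\in\widehat{\mathbb K}:|x|<\rho^{n}\text{ for all }n\in\mathbb N\}$, and the quotient ordered field $\mathbb L:=\mathcal M_\rho/\mathcal N_\rho$. A routine check (lifting square roots and roots of odd-degree polynomials into $\mathcal M_\rho$) shows $\mathbb L$ is real closed; it contains $\mathbb R$, and $[\rho]$ is a positive infinitesimal of $\mathbb L$ with $([\rho]^{-n})_n$ cofinal, so $\mathbb L$ shares the relevant structural features of $\mathbb K$.

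The heart of the argument is the claim that the natural map $\iota:\mathbb K\to\mathbb L$ (the inclusion $\mathbb K\hookrightarrow\mathcal M_\rho$, legitimate by (v), followed by the quotient map) is an isomorphism of ordered fields. Injectivity is immediate: $\ker\iota=\mathbb K\cap\mathcal N_\rho$ is an ideal of the field $\mathbb K$ not containing $1$, hence is $\{0\}$; and $\iota$ is order preserving because $\mathcal N_\rho$ is convex. Everything therefore rests on \emph{surjectivity}, i.e. $\mathcal M_\rho=\mathbb K+\mathcal N_\rho$. Given $\xi\in\mathcal M_\rho$ with $|\xi|\le s^{-m}$, multiplying by $s^{m}$ reduces matters to approximating a \emph{finite} element of $\widehat{\mathbb K}$ by elements of $\mathbb K$ to within $s^{N}$, for each $N\in\mathbb N$; once one knows that for every $n$ there is $k_n\in\mathbb K$ with $|\xi-k_n|\le s^{n}$, the intervals $[k_n-s^{n},k_n+s^{n}]$ of $\mathbb K$ have the finite intersection property, so by Cantor completeness (iii) there is $k\in\mathbb K$ lying in all of them, and then $\xi-k\in\mathcal N_\rho$, i.e. $\iota(k)=[\xi]$. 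So the whole proof comes down to the ``$s^{n}$-density'' of $\mathbb K$ in $\mathcal M_\rho(\widehat{\mathbb K})$ -- equivalently, that the cut determined in $\mathbb K$ by any $\xi\in\mathcal M_\rho$ can be squeezed from both sides, to within $s^{n}$, by elements of $\mathbb K$. This is the step I expect to be genuinely delicate, and it is exactly where conditions (ii)--(v) and the elementarity $\mathbb K\prec\widehat{\mathbb K}$ must all be brought to bear (the standard part map onto $\mathbb R$ handles the ``coefficient'' part, real closedness supplies the fractional powers $s^{q}$, and $s^{n}$-density has to be forced from Cantor completeness together with the elementary extension); if a direct argument proves awkward, it can be replaced by an appeal to the known structure theory of the $\rho$-asymptotic fields ${}^{\rho}\mathbb R$.

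Granting this, $\mathbb K\cong\mathbb L=\mathcal M_\rho/\mathcal N_\rho$, and it remains to see that the right-hand side is independent of all choices. If $\mathbb K_1,\mathbb K_2$ both satisfy (i)--(v), the construction above produces $\widehat{\mathbb K_1},\widehat{\mathbb K_2}$, which are isomorphic by Theorem~\ref{T: First Uniqueness Result}; call this field $\widehat{\mathbb K}$, and let $\rho_1,\rho_2\in\widehat{\mathbb K}$ be the images of the scales $s_1,s_2$. Both $\rho_1$ and $\rho_2$ are positive infinitesimals of $\widehat{\mathbb K}$, hence realize the same $1$-type over $\varnothing$ (the cut ``$0^+$'' over the real algebraic numbers); since $\widehat{\mathbb K}$ is saturated it is homogeneous, so some automorphism of $\widehat{\mathbb K}$ carries $\rho_1$ to $\rho_2$, and -- fixing $\mathbb N$ -- it carries $\mathcal M_{\rho_1}$ onto $\mathcal M_{\rho_2}$ and $\mathcal N_{\rho_1}$ onto $\mathcal N_{\rho_2}$. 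Hence $\mathcal M_{\rho_1}/\mathcal N_{\rho_1}\cong\mathcal M_{\rho_2}/\mathcal N_{\rho_2}$, and therefore $\mathbb K_1\cong\mathbb K_2$, as required. The only serious obstacle is the surjectivity/$s^{n}$-density step of the previous paragraph; the remainder is bookkeeping with saturation and the First Uniqueness Result.
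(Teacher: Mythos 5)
There is a genuine gap, and it sits exactly where you flagged it: the surjectivity claim $\mathcal M_\rho(\widehat{\mathbb K})=\mathbb K+\mathcal N_\rho(\widehat{\mathbb K})$ is not only unproven but is very likely false for the extension you construct. A $\kappa^+$-saturated elementary extension $\widehat{\mathbb K}$ is forced to realize \emph{wide} gaps of $\mathbb K$ inside $\mathcal M_\rho$, and such realizations survive in the quotient. Concretely, suppose the set $\mathcal L(\mathbb K)\cap\mathbb K_+$ of positive infinitely large elements of $\mathbb K$ has coinitiality at most $\kappa$ (nothing in hypotheses (i)--(v) excludes this). Then the type $\{n<x:n\in\mathbb N\}\cup\{x<b_\alpha\}$, with $(b_\alpha)$ a coinitial family in $\mathcal L(\mathbb K)\cap\mathbb K_+$, is a consistent type over fewer than $\kappa^+$ parameters, hence is realized by some $\xi\in\widehat{\mathbb K}$. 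This $\xi$ satisfies $0<\xi<s^{-1}$, so $\xi\in\mathcal M_\rho(\widehat{\mathbb K})$; but for every $k\in\mathbb K$ one checks $|\xi-k|>1$ (if $k$ is finite then $k+1$ is finite, so $\xi>k+1$; if $k$ is positive infinite then $k-1$ is positive infinite, so $\xi<k-1$). Hence $[\xi]\notin\iota(\mathbb K)$ and your map is not onto. Cantor completeness cannot rescue this: it is only $\aleph_1$-completeness for \emph{countable} families of closed intervals, so it closes width-zero cuts of countable character but says nothing about gaps of width $\geq s^n$ (here width $\geq 1$) or about cuts of uncountable character, which is precisely what saturation of $\widehat{\mathbb K}$ produces. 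Your own fallback -- ``appeal to the known structure theory of ${}^\rho\mathbb R$'' -- is essentially circular here, since that structure theory \emph{is} the content of the theorem being proved.

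For comparison: the paper offers no internal argument at all; its proof is a citation to Todorov \& Wolf~\cite{TodWolf04}, where the uniqueness is obtained by a quite different and substantially heavier route, namely a Hahn-field representation showing that any $\mathbb K$ satisfying (i)--(v) is ordered-field isomorphic to ${}^*\mathbb R((t^{\mathbb R}))$ (valuation-theoretic arguments of Kaplansky type, using real closedness, Cantor completeness and the scale $s$ to build the power-series representation), after which uniqueness follows from the uniqueness of ${}^*\mathbb R$ (Theorem~\ref{T: First Uniqueness Result}) under GCH. The outer layers of your proposal -- existence and uniqueness of the saturated extension, homogeneity moving $\rho_1$ to $\rho_2$ -- are fine, but they do not touch the real difficulty, which is identifying $\mathbb K$ itself with a canonical object; as it stands, the quotient $\mathcal M_\rho(\widehat{\mathbb K})/\mathcal N_\rho(\widehat{\mathbb K})$ is generally a \emph{proper} extension of $\mathbb K$, so the proposed canonical object is the wrong one.
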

\begin{proof} We refer to Todorov \& Wolf~\cite{TodWolf04} (where $\mathbb K$ appears as $^\rho\mathbb R$). For a presentation see also (Todorov~\cite{tdTodAxioms10}, Section 3). 
\end{proof} 
	Notice that (iv) and (v) imply that $\mathbb R$ is a proper subfield of $\mathbb K$ and that $\mathbb K$ is non-saturated. 

\section{Infinitesimals in Algebra and Non-Standard Analysis}\label{S: Infinitesimals in Algebra and Non-Standard Analysis}

	In this section we present several examples of non-Archimedean fields -- some constructed in the framework of algebra, others in non-standard analysis. On one hand, these examples illustrate some of the results from the previous sections, but on the other hand, they prepare us for the discussion of the history of calculus in the next section. 

	The examples of non-Archimedean fields constructed in the framework of algebra (including our earlier Example~\ref{Ex: Field of Rational Functions}) shows that the concept of \emph{infinitesimal} is not exclusively associated with non-standard analysis -- \textbf{infinitesimals are everywhere around us} -- if only you ``really want to open your eyes and to see them''. In particular, every totally ordered field containing a proper copy of $\mathbb R$ contains non-zero infinitesimals.

This section alone -- with the help perhaps, of Section~\ref{S: Preliminaries: Ordered Rings and Fields} -- might be described as ``the shortest introduction to non-standard analysis ever written'' and for some readers might be an ``eye opener.'' 

	If $X$ and $Y$ are two sets, we denote by $Y^X$ the set of all functions from $X$ to $Y$. In particular, $Y^\mathbb N$ stands for the set of all sequences in $Y$. We use the notation $\mathcal{P}(X)$ for the power set of $X$. To simplify the following discussion, we shall adopt the GCH (Generalized Continuum Hypothesis) in the form $2^{\kappa}=\kappa_+$ for any cardinal number $\kappa$. Also, we let $\card(\mathbb N)=\aleph_0$ and $\card(\mathbb R) =\frak{c}$ for the successor of $\aleph_0$ and ${\frak{c}^+}$ for the successor of $\frak{c}$. 

In what follows, $\mathbb{K}$ is a totally ordered field (Archimedean or not) with cardinality $\card(\mathbb K)=\kappa$ and cofinality ${\rm cof}(\mathbb K)$ (Definition~\ref{D: Totally Ordered Rings}). Most of the results in this section remain true if the field $\mathbb K$ is replaced by $\mathbb K(i)$. Recall that $\mathbb K$ is a real closed field (Lang~\cite{lang}, p. 451) if and only if $\mathbb K(i)$ is an algebraically closed field (Lang~\cite{lang}, p. 272) by Artin \& Schreier~\cite{ArtinSchreier27}. 

	We shall focus of the following field extensions:
 \begin{equation}\label{E: Chain 1}
\mathbb K\subset \mathbb K(t)\subset \mathbb K\bra t^{\mathbb Z}\ket\subset\mathbb K\{\{t\}\}\subset \mathbb K\langle t^{\mathbb R}\rangle\subset \mathbb K((t^{\mathbb R})),
\end{equation}
where (in backward order):
\begin{examples}[Power Series]\label{Ex: Power Series}
\begin{Ex-enum}

\item The field of \emph{Hahn power series with coefficients in $\mathbb K$ and valuation group} $\mathbb R$ (Hahn~\cite{hHahn}) is defined to be the set 
\[
\mathbb K((t^{\mathbb R}))=: \Big\{S=\sum_{r\in\mathbb R} a_rt^r : a_r\in\mathbb K\emph{ and } \supp(S) \emph{ is a well ordered set}\Big\},
\]
where $\supp(S)=\{r\in\mathbb R: a_r\not=0\}$. We supply $\mathbb K((t^{\mathbb R}))$ (denoted sometimes by $\mathbb K(\mathbb R)$) with the usual polynomial-like addition and multiplication and the \emph{canonical valuation} $\nu:\mathbb K((t^{\mathbb R}))\to\mathbb R\cup\{\infty\}$ defined by $\nu(0)=:\infty$ and $\nu(S)=:\min(\supp(S))$ for all $S\in\mathbb K((t^{\mathbb R})), S\not= 0$. In addition, $\mathbb K((t^{\mathbb R}))$ has a natural ordering given by 
\[
\mathbb K((t^{\mathbb R}))_+=:\Big\{S=\sum_{r\in\mathbb R} a_rt^r : a_{\nu(S)}>0\Big\}.
\]

\item The field of \emph{Levi-Civit\'{a} power series} (Levi-Civit{\'a}~\cite{LC}) is defined to be the set 
\[
\mathbb K\langle t^{\mathbb R}\rangle=:\Big\{\sum_{n=0}^{\infty}a_nt^{r_n} : a_n\in\mathbb K \emph{ and } r_n\in\mathbb R, r_0<r_1<\dots\to\infty\Big\},
\]
where $r_0<r_1<\dots\to\infty$ means that $(r_n)$ is a unbounded strictly increasing sequence in $\mathbb R$.
\item The field of \emph{Puiseux power series}  with coefficients in $\mathbb K$ (called also \emph{Newton-Puiseux series}) is 
\[
\mathbb K\{\{t\}\}=:\Big\{\sum_{n=m}^\infty a_n\, t^{n/\nu}: a_n\in\mathbb K \text{ and } m\in\mathbb Z, \nu\in\mathbb N\Big\}.
\]
\item $\mathbb K\bra t^{\mathbb Z}\ket=:\Big\{\sum_{n=m}^{\infty}a_nt^n : a_n\in\mathbb K \emph{ and }m\in\mathbb Z\Big\}$ is the field of the \emph{Laurent power series} with coefficients in $\mathbb K$.

\item $\mathbb K(t)$ is the field of rational functions with coefficients in $\mathbb K$ (Example~\ref{Ex: Field of Rational Functions}).
\end{Ex-enum}
\end{examples}
The fields $\mathbb K\bra t^{\mathbb Z}\ket$, $\mathbb K\{\{t\}\}$ and $\mathbb K\langle t^{\mathbb R}\rangle$ are supplied with algebraic operations, ordering and valuation inherited from $\mathbb K((t^{\mathbb R}))$. As the ordering of $\mathbb K(t)$ given in Example~\ref{Ex: Field of Rational Functions} agrees with the ordering inherited from $\mathbb K( (t^\mathbb R))$, it follows that each of the fields listed above are also non-Archimedean. To illustrate this last fact, we consider the elements $\sum_{n=0}^{\infty}n!\, t^{n+1/3}$ and $\sum_{n=-1}^{\infty} t^{n+1/2}=\frac{\sqrt{t}}{t-t^2}$ of $\mathbb K\bra t^\mathbb R\ket$, which are examples of positive infinitesimal and positive infinitely large numbers, respectively.

\begin{remark}[A Generalization]  The valuation group $\mathbb R$ in $t^\mathbb R$ can be replaced by any ordered divisible Abelian group $(G, +, <)$ (often $(\mathbb Z, +, <)$, $(\mathbb Q, +, <)$ or $(\mathbb R, +, <)$). The result are the fields: $\mathbb K((t^G))$ (often denoted by $\mathbb K(G)$), $\mathbb K((t^\mathbb Q))$, $\mathbb K\bra t^{G}\ket$, $\mathbb K\bra t^{\mathbb Z}\ket$, $\mathbb K\bra t^{\mathbb Q}\ket$, etc. 
\end{remark}

If $\mathbb K$ is real closed or algebraically closed (Lang~\cite{lang}, p. 451, 272), then $\mathbb K\{\{t\}\}$, $\mathbb K\langle t^{\mathbb R}\rangle$ and $\mathbb K((t^{\mathbb R}))$ are also real closed or algebraically closed, respectively. Consequently, the fields $\mathbb K$, $\mathbb K\{\{t\}\}$, $\mathbb K\bra t^\mathbb R\ket$, $\mathbb K((t^\mathbb R))$ and $^*\mathbb K$ (see below) share the property to be \emph{real closed} or \emph{algebraically closed}. In particular, $\mathbb R\{\{t\}\}$ is the real closure of $\mathbb R\bra t^{\mathbb Z}\ket$ and $\mathbb R\bra t^{\mathbb Q}\ket$ is the completion of $\mathbb R\{\{t\}\}$  relative the the valuation metric $|\cdot|_v$. Similarly, $\mathbb C\{\{t\}\}$ is the algebraic closure of $\mathbb C\bra t^{\mathbb Z}\ket$ and $\mathbb C\bra t^{\mathbb Q}\ket$ is the completion of $\mathbb C\{\{t\}\}$  relative the the valuation metric $|\cdot|_v$.  For more detail we refer to (Hahn~\cite{hHahn}, Ostrowski~\cite{aOstrowski}, Prestel~\cite{aPrestel}, Robinson~\cite{aRob66}-\cite{aRob73}, Lightstone \& Robinson~\cite{LiRob}). Furthermore, the cofinality of each of the fields $\mathbb K\bra t^{\mathbb Z}\ket$, $\mathbb K\langle t^{\mathbb R}\rangle$, and $\mathbb K((t^{\mathbb R}))$ is $\aleph_0=\card(\mathbb N)$ because the sequence $(1/t^n)_{n\in\mathbb N}$ is unbounded in each of them. The field $\mathbb{K}((t^{\mathbb{R}}))$ is spherically complete (Hahn~\cite{hHahn}, Krull~\cite{krull} and Luxemburg~\cite{wLux76}, Theorem~2.12). Consequently, the fields  $\mathbb K\bra t^{\mathbb Z}\ket$, $\mathbb K\langle t^{\mathbb R}\rangle$, and $\mathbb K((t^{\mathbb R}))$ are Cauchy (and sequentially) complete; however, none of these fields are necessarily Cantor complete or saturated. In particular, if $\mathbb K$ is Archimedean, these fields are certainly not Cantor complete. The fact that the fields $\mathbb{R}(t^{\mathbb Z})$ and $\mathbb{R}\langle t^{\mathbb R}\rangle$ are sequentially complete was also proved independently in (Laugwitz~\cite{Laugwitz}). The field $\mathbb{R}\langle t^{\mathbb R}\rangle$ was introduced by Levi-Civit\'{a} in \cite{LC} and later was investigated by D. Laugwitz in \cite{Laugwitz} as a potential framework for the rigorous foundation of infinitesimal calculus before the advent of Robinson's nonstandard analysis. It is also an example of a real-closed valuation field that is sequentially complete, but not spherically complete (Pestov~\cite{vPestov}, p. 67).  The field of Puiseux power series $\mathbb K\{\{t\}\}$ is introduced by Isaac Newton in 1676 and later rediscover by Victor Puiseux in 1850. 

Beyond these examples of fields based on formal series, we have the following examples from non-standard analysis.

\begin{definition}[Free Ultrafilter] 
   Recall that a collection $\mathcal{U}$ of subsets of $\mathbb N$ is a \emph{free ultrafilter} on $\mathbb N$ if: (a) $\varnothing\notin\mathcal{U}$; (b) $\mathcal{U}$ is closed under finitely many intersections; (c) If $A\in\mathcal{U}$ and $B\subseteq\mathbb N$, then $A\subseteq B$ implies $B\in\mathcal{U}$; (d) $\bigcap_{A\in\mathcal{U}}\, A=\varnothing$; (e) For every $A\subseteq\mathbb N$ either $A\in\mathcal{U}$, or $\mathbb N\setminus A\in\mathcal{U}$. 
\end{definition}

	Notice that the Fr{\'e}chet free filter $\mathcal F_r=\big\{S\subseteq\mathbb N: \mathbb N\setminus S \text{ is a finite set}\big\}$ on $\mathbb N$ satisfies all properties above except the last one. The existence of free ultrafilters follows from the axiom of choice (Zorn's lemma) as a maximal extension of $\mathcal F_r$. For more details we refer again to (Lindstr\o m~\cite{lindstrom}). Let $\mathcal U$ be one (arbitrarily chosen) ultrafilter on $\mathbb N$. We shall keep $\mathcal U$ fixed in what follows.

\begin{definition}[Non-Standard Field Extension]\label{D: Non-Standard Analysis} Let $\mathbb K^\mathbb N$ be the ring of sequences in $\mathbb K$ and $\sim$ is an equivalence relation on $\mathbb K^\mathbb N$ defined by: $(a_n)\sim (b_n)$ if $\{n\in \mathbb N: a_n=b_n\}\in\mathcal{U}$. The the factor space $^*\mathbb K=\mathbb K^{\mathbb N}/{\sim}$ is called a \emph{non-standard extension}of $\mathbb K$.   We denote by $\bra a_n\ket\in{^*\mathbb K}$ the equivalence class of the sequence $(a_n)\in{\mathbb K^\mathbb N}$. The original field $\mathbb K$ is embedded as a subfield of $^*\mathbb K$ by mean of the constant sequences: $r\to\bra r, r,\dots\ket$, where $r\in\mathbb R$. More generally, the \emph{non-standard extension} of a set $S\subseteq\mathbb K$ is defined by $^*S=\big\{\bra a_n\ket\in{^*\mathbb K}: \{n\in \mathbb N: a_n\in S\}\in\mathcal{U}\big\}$. Let $X\subseteq\mathbb K$ and $f: X\to\mathbb K$ be a function. We defined its \emph{non-standard extension} $^*f: {^*X}\to{^*\mathbb K}$ of $f$ by the formula $^*f(\bra x_n\ket)=\bra f(x_n)\ket$. It is clear that $^*f\rest X=f$, hence we can sometimes skip the asterisks to simplify our notation. Similarly we define $^*f$ for functions $f: X\to\mathbb K^q$, where $X\subseteq\mathbb K^p$ (noting that $^*(\mathbb K^p)={(^*\mathbb K)^p}$). Also if $f\subset \mathbb K^p\times \mathbb K^q$ is a function, then $^*f\subset{^*\mathbb K^p}\times{^*\mathbb K^q}$ is a function as well.   For the details and missing proofs of this and other results we refer to any of the many excellent introductions to non-standard analysis, e.g.  Lindstr\o m~\cite{lindstrom}, Davis~\cite{davis}, Capi\'{n}ski \& Cutland ~\cite{CapinskiCutland95} and Cavalcante~\cite{cavalcante}.
\end{definition}

It turns out that $^*\mathbb K$ is an algebraically saturated ($\frak{c}$-saturated) ordered field extension of $\mathbb K$. The latter implies $\frak{c}\leq\card(^*\mathbb K)$ by Theorem~\ref{T: Cardinality and Cantor Completeness} and $\frak{c}\leq{\rm cof}(^*\mathbb K)$ by Theorem~\ref{T: Cofinality and Saturation}. It can be proved that $^*\mathbb N$ is unbounded from above in $^*\mathbb K$, i.e. $\bigcap_{n\in{^*\mathbb N}}(n, \infty)=\varnothing$. The latter implies $\card(^*\mathbb N)\geq\frak{c}$ by the $\frak{c}$-saturation of $^*\mathbb K$ and thus ${\rm cof}(^*\mathbb K)\le\card(^*\mathbb N)$. Finally, $^*\mathbb K$ is real closed (Lang~\cite{lang}, p. 451) if and only if $\mathbb K$ is real closed. Also,  $^*\mathbb K$ is algebraically closed (Lang~\cite{lang}, p. 272) if and only if $\mathbb K$ is algebraically closed.

If $r\in\mathbb K, r\not=0$, then  $\bra 1/n\ket, \bra r+1/n\ket, \bra n\ket$ present examples for positive infinitesimal, finite (but non-infinitesimal) and infinitely large elements in $^*\mathbb K$, respectively. 

	The simplified form of the transfer principe below is due to Keisler~\cite{jKeislerF}.
\begin{theorem}[Keisler Transfer Principle]\label{T: Keisler Transfer Principle} 
   Let $\mathbb K$ be an ordered Archimedean field and $p, q\in\mathbb N$. Then $S$ is the solution set of the system
   \begin{equation}\label{E: System in K}
   \begin{cases}
   f_i(x)=F_i(x), &i=1, 2, \dots, n_1,\\
   g_j(x)\not= G_j(x),&j=1, 2, \dots, n_2,\\
   h_k(x)\leq H_k(x), &k=1, 2, \dots, n_3,
   \end{cases}
   \end{equation}
   if and only if $^*S$ is the solution set of the system of equations and inequalities
   \begin{equation}\label{E: System in *K}
   \begin{cases}
   ^*f_i(x)={^*}F_i(x), &i=1, 2, \dots, n_1,\\
   ^*g_j(x)\not={^*}G_j(x), &j=1, 2, \dots, n_2,\\
   ^*h_k(x)\leq {^*}H_k(x), &k=1, 2, \dots, n_3.
   \end{cases}
   \end{equation}
Here $f_i, F_i, g_j, G_j\subset \mathbb K^p\times \mathbb K^q$ and  $h_k, H_k\subset \mathbb K^p\times \mathbb K$ are functions in $p$-variables and $n_1, n_2, n_3\in\mathbb{N}_0$ (if $n_1=0$, then $f_i(x)=F_i(x)$ will be missing in (\ref{E: System in K}) and similarly for the rest). The quantifier ``for all'' is over $d, p, q$ and all functions involved.
\end{theorem}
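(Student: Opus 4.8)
The plan is to reduce the stated equivalence to the behaviour of the equivalence classes $\bra a_n\ket$ in $^*\mathbb K$ under the atomic relations $=$, $\neq$, $\leq$ and under function application, and then to combine finitely many ``$\mathcal U$-large'' index sets using the finite intersection property of the ultrafilter $\mathcal U$. No induction on formula complexity is needed, which is precisely why this ``simplified'' transfer principle is so elementary: the systems \eqref{E: System in K} and \eqref{E: System in *K} are conjunctions of atomic and negated-atomic conditions only, with all functions named explicitly rather than bound by quantifiers.

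First I would record the atomic transfer facts, using the identification $^*(\mathbb K^m)=(^*\mathbb K)^m$ throughout. For sequences $(a_n),(b_n)$ valued in $\mathbb K^q$, one has $\bra a_n\ket=\bra b_n\ket$ iff $\{n:a_n=b_n\}\in\mathcal U$ — this is just the definition of $\sim$ — and hence $\bra a_n\ket\neq\bra b_n\ket$ iff $\{n:a_n\neq b_n\}\in\mathcal U$, by the ultrafilter dichotomy (for every $A\subseteq\mathbb N$ exactly one of $A$, $\mathbb N\setminus A$ lies in $\mathcal U$). Likewise, for $\mathbb K$-valued sequences, $\bra a_n\ket\leq\bra b_n\ket$ in $^*\mathbb K$ iff $\{n:a_n\leq b_n\}\in\mathcal U$, since the order on $^*\mathbb K$ is the one induced by $\mathcal U$ and the dichotomy again exchanges $\leq$ with its negation $>$. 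Finally, for a function $f$ in $p$ variables with domain $X$ and for $\xi=\bra x_n\ket\in{^*X}$, we have $^*f(\xi)=\bra f(x_n)\ket$ by Definition~\ref{D: Non-Standard Analysis}; here $\xi\in{^*X}$ guarantees $\{n:x_n\in X\}\in\mathcal U$, so, choosing the representative with $x_n\in X$ for all $n$, the right-hand side is a well-defined element of $^*\mathbb K^q$.

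For the forward implication I would assume $S$ is the solution set of \eqref{E: System in K}, fix $\xi\in{^*\mathbb K^p}$, and choose a representative sequence $(x_n)$ in $\mathbb K^p$ with $\xi=\bra x_n\ket$. By the atomic facts above, $\xi$ satisfies \eqref{E: System in *K} if and only if each of the sets $A_i=\{n:f_i(x_n)=F_i(x_n)\}$ $(i\le n_1)$, $B_j=\{n:g_j(x_n)\neq G_j(x_n)\}$ $(j\le n_2)$ and $C_k=\{n:h_k(x_n)\leq H_k(x_n)\}$ $(k\le n_3)$ belongs to $\mathcal U$ (when some $n_\ell=0$ the corresponding family is empty and imposes no condition). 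If all these sets lie in $\mathcal U$, then so does their finite intersection $A$, and $x_n\in S$ for every $n\in A$; hence $\{n:x_n\in S\}\supseteq A$ lies in $\mathcal U$ by upward closure, i.e. $\xi\in{^*S}$. Conversely, if $\xi\in{^*S}$ then $\{n:x_n\in S\}\in\mathcal U$ and is contained in each $A_i$, $B_j$, $C_k$, so all of these belong to $\mathcal U$ and $\xi$ satisfies \eqref{E: System in *K}. Thus $^*S$ is exactly the solution set of \eqref{E: System in *K}. The backward implication is then formal: if $S'\subseteq\mathbb K^p$ denotes the (unconditionally defined) solution set of \eqref{E: System in K}, the forward implication gives that $^*(S')$ is the solution set of \eqref{E: System in *K}, so assuming $^*S$ is also that solution set forces $^*(S')={^*S}$; since the map $T\mapsto{^*T}$ is injective on $\mathcal P(\mathbb K^p)$ — for $x\in\mathbb K^p$ the constant sequence $\bra x,x,\dots\ket$ lies in $^*T$ iff $\{n:x\in T\}\in\mathcal U$, and that index set is $\mathbb N$ when $x\in T$ and $\varnothing$ otherwise, only the former lying in $\mathcal U$ — we conclude $S'=S$.

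The one place that genuinely needs care is the bookkeeping around the domains of the partial functions $f_i,F_i,g_j,G_j,h_k,H_k$: the identity $^*f(\bra x_n\ket)=\bra f(x_n)\ket$ and the inclusion $\{n:x_n\in S\}\subseteq A_i$ are literally meaningful only once the relevant values $f(x_n)$ are defined, so $S$ should be taken inside the intersection of all the domains and ``$\xi$ satisfies \eqref{E: System in *K}'' must be read as including $\xi\in{^*(\dom f)}$ for each function $f$ in the system. Apart from this, the whole argument uses only properties (a), (b), (c), (e) of a free ultrafilter; notably it never uses that $\mathbb K$ is Archimedean, that hypothesis serving only to fix the intended setting $\mathbb K=\mathbb R$, ${^*\mathbb K}={^*\mathbb R}$.
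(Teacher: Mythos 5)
Your argument is correct, but note that the paper itself gives no proof of this theorem at all: it is stated with the attribution ``due to Keisler'' and the reader is sent to Keisler's \emph{Foundations of Infinitesimal Calculus}. So there is nothing internal to compare against; what you have supplied is the standard \L o\'{s}-type argument for quantifier-free (conjunctive, atomic and negated-atomic) conditions, specialized to the ultrapower $\mathbb K^{\mathbb N}/\mathcal U$ of Definition~\ref{D: Non-Standard Analysis}, and it is sound. The atomic reductions are right, including the slightly delicate point that for $\mathbb K^q$-valued data the dichotomy still converts ``$\{n: a_n=b_n\}\notin\mathcal U$'' into ``$\{n: a_n\neq b_n\}\in\mathcal U$'' (equivalently, a finite union lying in $\mathcal U$ forces one of its members to lie in $\mathcal U$); the forward direction correctly uses only finite intersections and upward closure of $\mathcal U$; and the deduction of the converse from injectivity of $T\mapsto{^*T}$ via constant sequences is clean and avoids re-running the argument. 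Your two closing observations are also accurate and worth making explicit: the domains of the partial functions $f_i, F_i, g_j, G_j, h_k, H_k$ must be folded into the meaning of ``solution set'' on both sides (the paper's statement is silent on this), and the Archimedean hypothesis plays no role in the proof --- it is only there because the theorem is later invoked, via Axiom~\ref{A: Leibniz Transfer Principle}, to characterize the Leibniz field $\mathbb L\cong\mathbb R$. The one thing you could state more carefully is that $^*f(\bra x_n\ket)=\bra f(x_n)\ket$ is independent of the chosen representative $(x_n)$ with $x_n\in\dom(f)$; this is immediate from the same ultrafilter properties, but it is the step that makes all subsequent manipulations legitimate.
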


\begin{example}[Non-Standard Real Numbers]\label{Ex: Non-Standard Real Numbers} Let  $^*\mathbb R$ be the non-standard extension of $\mathbb R$ (let $\mathbb K=\mathbb R$ in Definition~\ref{D: Non-Standard Analysis}). The elements of $^*\mathbb R$ are known as \emph{non-standard real numbers} (or \emph{hyperreals}). $^*\mathbb R$ is a real closed algebraically saturated ($\frak{c}$-saturated) field in the sense that every nested sequence of open intervals in $^*\mathbb R$ has a non-empty intersection.  Also, $\mathbb R$ is embedded as an ordered subfield of ${^*\mathbb R}$ by means of the constant sequences. We have $\card(^*\mathbb R)=\frak{c}$ and ${\rm cof}(^*\mathbb R)=\frak{c}$. Indeed, in addition to $\frak{c}\leq\card(^*\mathbb R)$ (see above), we have, by the GCH, $\card(^*\mathbb R)\le \card(\mathbb R^\mathbb N)=(2^{\aleph_0})^{\aleph_0}=2^{(\aleph_0)^2}=2^{\aleph_0}=\frak{c}$. Also, in addition to  $\frak{c}\leq{\rm cof}(^*\mathbb R)$ (see above), we have ${\rm cof}(^*\mathbb R)\le\card(^*\mathbb R)=\frak{c}$. Consequently, $^*\mathbb R$ is unique up to a field isomorphism (it does not depend on the particular choice of the ultrafilter $\mathcal U$) by Theorem~\ref{T: First Uniqueness Result}. It should be mentioned that $^*\mathbb R$ is not Cauchy complete: its completion (in terms of fundamental $\frak{c}$-nets instead of sequences) is a real closed saturated field (Keisler \& Schmerl\cite{KeislerSchmerl91} and  Dales \& Woodin~\cite{DalWoodin}). 
\end{example}

	Here are several important results of non-standard analysis:

\begin{theorem}[Leibniz Completeness Principle] $\mathbb R$ is \emph{Leibniz complete} in the sense that every finite number $x\in{^*\mathbb R}$ is infinitely close to some (necessarily unique) number $r\in\mathbb R$ (\# 2 of Definition~\ref{D: Completeness of a Totally Ordered Field}). We define the standard part mapping $\st: \mathcal{F}(^*\mathbb R)\to\mathbb R$ by $\st(x)=r$. 
\end{theorem}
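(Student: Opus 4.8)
The plan is to derive this immediately from the characterization already in hand, and then, for the reader who wants a self-contained argument, to record a short direct proof. First I would recall that $\mathbb R$ is Dedekind complete; this is classical, and it is also covered by Theorem~\ref{T: Existence of Dedekind Fields} together with Corollary~\ref{C: ded order iso}, which guarantee a Dedekind complete field and show that every such field is order-isomorphic to $\mathbb R$. By the equivalence $(ix)\Leftrightarrow(ii)$ in Theorem~\ref{T: Completeness of an Archimedean Field}, Dedekind completeness of $\mathbb R$ is precisely Leibniz completeness of $\mathbb R$, i.e.\ the statement that every $x\in\mathcal F(^*\mathbb R)$ admits a (unique) decomposition $x=r+dx$ with $r\in\mathbb R$ and $dx\in\mathcal I(^*\mathbb R)$ --- which is exactly the assertion of the theorem and makes $\st:\mathcal F(^*\mathbb R)\to\mathbb R$ well-defined (cf.\ \#2 of Definition~\ref{D: Completeness of a Totally Ordered Field}).

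For a direct argument I would proceed as in the proof of $(i)\Rightarrow(ii)$ in Theorem~\ref{T: Completeness of an Archimedean Field}, but using $\sup$ in place of Cantor completeness. Given a finite $x\in{}^*\mathbb R$, set $A=\{a\in\mathbb R : a\le x \text{ in } {}^*\mathbb R\}$. Since $x$ is finite, $A$ is non-empty and bounded from above in $\mathbb R$, so $r=:\sup_{\mathbb R}(A)$ exists by the Dedekind completeness of $\mathbb R$. The key step is to check that $x-r\in\mathcal I(^*\mathbb R)$: if not, then $\tfrac1n<|x-r|$ for some $n\in\mathbb N$, whence either $x<r-\tfrac1n$ --- so $r-\tfrac1n$ is an upper bound of $A$ strictly below $r$, contradicting leastness --- or $x>r+\tfrac1n$ --- so $r+\tfrac1n\in A$ exceeds $r$, contradicting that $r$ is an upper bound. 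Uniqueness is the easy part: if $x=r+dx=r'+dx'$ with $r,r'\in\mathbb R$ and $dx,dx'\in\mathcal I(^*\mathbb R)$, then $r-r'=dx'-dx\in\mathcal I(^*\mathbb R)$, but $r-r'\in\mathbb R$ and the only infinitesimal of an Archimedean field is $0$ by Theorem~\ref{T: Archimedean Property}, so $r=r'$.

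I do not expect a serious obstacle; the only point needing a little care is the verification that $x-r$ is infinitesimal, which rests on translating the defining inequalities of the supremum into ${}^*\mathbb R$ --- and this is immediate because $\mathbb R$ sits inside ${}^*\mathbb R$ as an ordered subfield, so order relations among reals are unchanged. In fact the entire content is already packaged in Theorem~\ref{T: Completeness of an Archimedean Field}, so the shortest honest proof is simply the one-line deduction in the first paragraph.
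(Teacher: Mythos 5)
Your proposal is correct and follows exactly the route the paper intends: the theorem is stated without a separate proof precisely because it is the instance of the equivalence $(ix)\Leftrightarrow(ii)$ of Theorem~\ref{T: Completeness of an Archimedean Field} applied to the Dedekind complete field $\mathbb R$, which is your first paragraph. Your supplementary direct supremum argument is also sound (it is essentially the Keisler-style proof of $(ix)\Rightarrow(ii)$ alluded to in the remark following that theorem), but nothing beyond the one-line deduction is required.
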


\begin{theorem}[Limit]\label{T: Limit}  Let  $X\subseteq\mathbb R$ and $f: X\to\mathbb R$ be a real function. Let  $r, L\in\mathbb R$. Then:
\begin{T-enum}
\item $r$ is a non-trivial adherent (cluster) point of $X$ if and only if $r+dx\in{^*X}$ for some non-zero infinitesimal $dx\in{^*\mathbb R}$.

\item
$\lim_{x\to r} f(x)  =  L$ if and only if $^*f(r + dx) \approx L$ for all non-zero infinitesimals $dx\in{^*\mathbb R}$ such that $r+dx\in{^*X}$. In the latter case we have $\lim_{x\to r} f(x) = \st(^*f(r + dx))$. 
\end{T-enum}
\end{theorem}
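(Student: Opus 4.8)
The plan is to read both equivalences directly off the ultrapower description of ${^*\mathbb R}$ in Definition~\ref{D: Non-Standard Analysis}, using only the elementary facts that $S\subseteq T$ implies ${^*S}\subseteq{^*T}$, that ${^*}(S\cap T)={^*S}\cap{^*T}$, that ${^*}(a,b)=\{x\in{^*\mathbb R}:a<x<b\}$ for standard $a,b$, and that ${^*f}(\bra x_n\ket)=\bra f(x_n)\ket$ whenever $\bra x_n\ket\in{^*X}$; saturation is not needed, and the same arguments could be phrased through Theorem~\ref{T: Keisler Transfer Principle}. Throughout I write a nonzero infinitesimal as $dx=\bra d_n\ket$ and use that $dx\in\mathcal I({^*\mathbb R})$ means $\{n:|d_n|<1/m\}\in\mathcal U$ for every $m\in\mathbb N$, that $dx\ne0$ means $\{n:d_n\ne0\}\in\mathcal U$ (by the ultrafilter property), and that $r+dx\in{^*X}$ means $\{n:r+d_n\in X\}\in\mathcal U$.

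For part (i), the forward direction is a direct construction: given a non-trivial adherent point $r$ of $X$, choose for each $n\in\mathbb N$ some $x_n\in X$ with $0<|x_n-r|<1/n$ and set $dx=\bra x_n-r\ket$; then $r+dx=\bra x_n\ket\in{^*X}$, $dx$ is infinitesimal (for fixed $m$ the set $\{n:|x_n-r|<1/m\}$ is cofinite, hence in $\mathcal U$), and $dx\ne0$. For the converse, suppose $r+dx\in{^*X}$ with $dx$ a nonzero infinitesimal; if $r$ failed to be a non-trivial adherent point there would be an $n$ with $X\cap(r-1/n,r+1/n)\subseteq\{r\}$, and taking nonstandard extensions of both sides gives ${^*X}\cap(r-1/n,r+1/n)\subseteq\{r\}$, forcing $r+dx=r$ (since $|dx|<1/n$), i.e. $dx=0$, a contradiction. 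Equivalently, $\{n:r+d_n\in X\}\cap\{n:0<|d_n|<1/m\}\in\mathcal U$ is nonempty for every $m$, so $X$ meets every punctured $1/m$-neighborhood of $r$.

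For part (ii) --- by part (i) the right-hand side is vacuous unless $r$ is a non-trivial adherent point of $X$, so assume it is --- suppose first that $\lim_{x\to r}f(x)=L$, and let $dx$ be a nonzero infinitesimal with $r+dx\in{^*X}$. For each standard $\varepsilon>0$ pick $\delta>0$ so that $0<|x-r|<\delta\Rightarrow|f(x)-L|<\varepsilon$ on $X$; then $\{n:r+d_n\in X\}\cap\{n:0<|d_n|<\delta\}\in\mathcal U$, and on this set $|f(r+d_n)-L|<\varepsilon$, whence $|{^*f}(r+dx)-L|<\varepsilon$ holds in ${^*\mathbb R}$. Since $\varepsilon$ ranged over all standard positive reals, ${^*f}(r+dx)-L\in\mathcal I({^*\mathbb R})$, i.e. ${^*f}(r+dx)\approx L$; because $L\in\mathbb R$, this also yields $\st({^*f}(r+dx))=L=\lim_{x\to r}f(x)$, which is the final clause. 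Conversely, if $\lim_{x\to r}f(x)\ne L$, fix a standard $\varepsilon>0$ witnessing the failure and for each $n$ choose $x_n\in X$ with $0<|x_n-r|<1/n$ and $|f(x_n)-L|\ge\varepsilon$; then $dx=\bra x_n-r\ket$ is a nonzero infinitesimal with $r+dx\in{^*X}$ as in part (i), yet $|{^*f}(r+dx)-L|\ge\varepsilon$ because $|f(x_n)-L|\ge\varepsilon$ for every $n$, contradicting ${^*f}(r+dx)\approx L$.

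The forward directions are mechanical once the ultrafilter bookkeeping is set up; the real content sits in the two converses, and the one point needing care is to keep the quantifier structure inside what the ultrapower construction (or the restricted Transfer Principle) actually delivers. That is why I run the converse of (i) by applying ${^*}(\cdot)$ to an \emph{inclusion of sets} rather than transferring a bare existential, and the converse of (ii) by contradiction through an \emph{explicitly constructed} witnessing sequence rather than by a downward transfer of the nested formula ``$\exists\delta\,\forall x(\dots)$.'' With this phrasing, every step reduces to a membership assertion about the ultrafilter $\mathcal U$.
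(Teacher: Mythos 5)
Your proof is correct. Note, however, that the paper itself offers no proof of this theorem: it is listed among ``several important results of non-standard analysis'' and the reader is referred to the standard introductions (Lindstr\o m, Davis, Keisler), so there is no in-text argument to compare against. Your ultrapower argument is exactly the standard one and fills that gap cleanly: the forward directions of both parts by explicit construction of a witnessing sequence $(x_n)$ with $0<|x_n-r|<1/n$ (which, as you should perhaps flag, uses countable choice), and the converses by transferring a set inclusion in (i) and by building an explicit counterexample sequence from the negated $\varepsilon$--$\delta$ statement in (ii), thereby avoiding any transfer of a nested quantifier block beyond what Theorem~\ref{T: Keisler Transfer Principle} licenses. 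Two small points worth making explicit: first, $^*f(r+dx)=\bra f(r+d_n)\ket$ is only defined via the $\mathcal U$-large set $\{n: r+d_n\in X\}$, so one should remark that the equivalence class does not depend on how $f(r+d_n)$ is assigned off that set; second, your parenthetical reduction of (ii) to the case where $r$ is a non-trivial adherent point is the right reading of the statement, since otherwise both sides are vacuous or undefined, but it deserves one sentence rather than a dash. Neither affects correctness.
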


\begin{theorem}[Derivative]\label{T: Derivative}  Let  $r\in X\subseteq\mathbb R$ and $f: X\to\mathbb R$ be a real function.  Then $f$ is differentiable at $r$ if and only if there exists $L\in\mathbb R$ such that $\frac{^*f(r + dx)-f(r)}{dx} \approx L$ for all non-zero infinitesimals $dx\in{^*\mathbb R}$ such that $r+dx\in{^*X}$. The number $L$ is called \emph{derivative} of $f$ at $r$ and denoted by $f^\prime(r)$. That is $\st\big(\frac{^*f(r + dx)-f(r)}{dx}\big) = f^\prime(r)$.
\end{theorem}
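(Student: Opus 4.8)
The plan is to reduce the statement to the characterization of limits in Theorem~\ref{T: Limit} by passing to the difference quotient. Define $g\colon X\setminus\{r\}\to\mathbb R$ by $g(x)=\frac{f(x)-f(r)}{x-r}$. By the classical definition, $f$ is differentiable at $r$ with $f^\prime(r)=L$ precisely when $r$ is a non-trivial adherent (cluster) point of $X$ and $\lim_{x\to r}g(x)=L$; since $r$ is a cluster point of $X$ if and only if it is a cluster point of $X\setminus\{r\}$, nothing is lost in taking the domain of $g$ to be $X\setminus\{r\}$.

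First I would record the elementary identity $^*g(r+dx)=\frac{^*f(r+dx)-f(r)}{dx}$, valid for every $r+dx\in{^*X}\setminus\{r\}$, equivalently for every non-zero infinitesimal $dx$ with $r+dx\in{^*X}$. This is immediate from the ultrapower description in Definition~\ref{D: Non-Standard Analysis}: writing $r+dx=\bra y_n\ket$ with $y_n\ne r$ on a set belonging to $\mathcal U$, we get $^*g(\bra y_n\ket)=\bra g(y_n)\ket=\bra\tfrac{f(y_n)-f(r)}{y_n-r}\ket$, and since the ring operations on $^*\mathbb R$ act coordinatewise this equals $\frac{\bra f(y_n)\ket-f(r)}{\bra y_n\ket-r}=\frac{^*f(r+dx)-f(r)}{dx}$. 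I would also note that $^*(X\setminus\{r\})={^*X}\setminus\{r\}$, so the quantifier ``for all non-zero infinitesimals $dx$ with $r+dx\in{^*X}$'' is the same as ``for all infinitesimals $dx$ with $r+dx\in{^*(X\setminus\{r\})}$''.

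With these preliminaries, applying Theorem~\ref{T: Limit}(ii) to $g$ gives $\lim_{x\to r}g(x)=L$ if and only if $^*g(r+dx)\approx L$ for all non-zero infinitesimals $dx$ with $r+dx\in{^*X}$, i.e.\ if and only if $\frac{^*f(r+dx)-f(r)}{dx}\approx L$ for all such $dx$. Combining this with the reformulation of differentiability from the first paragraph yields the asserted equivalence, and the final formula $\st\big(\frac{^*f(r+dx)-f(r)}{dx}\big)=f^\prime(r)$ is the ``in the latter case'' clause of Theorem~\ref{T: Limit}(ii) applied to $g$, once we know $L=f^\prime(r)\in\mathbb R$.

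I do not expect a serious obstacle: the argument is essentially bookkeeping on top of Theorem~\ref{T: Limit}. The only point requiring care is the domain matching --- ensuring that ``$r$ is a non-trivial cluster point of $X$,'' which is built into $f$ being differentiable at $r$, corresponds via Theorem~\ref{T: Limit}(i) to the existence of at least one non-zero infinitesimal $dx$ with $r+dx\in{^*X}$, so that the universally quantified condition on the right-hand side is not vacuously true in a degenerate (isolated-point) case. Once that correspondence is made explicit, the equivalence follows immediately.
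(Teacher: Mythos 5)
The paper itself states this theorem without proof (it is listed among the ``important results of non-standard analysis'' and deferred to the standard references such as Keisler~\cite{jKeislerF} and Davis~\cite{davis}), so there is no in-paper argument to compare against. Your reduction to Theorem~\ref{T: Limit}(ii) via the difference quotient $g(x)=\frac{f(x)-f(r)}{x-r}$ on $X\setminus\{r\}$, together with the coordinatewise verification that $^*g(r+dx)=\frac{^*f(r+dx)-f(r)}{dx}$ and the identification $^*(X\setminus\{r\})={^*X}\setminus\{r\}$, is correct and is exactly the argument one finds in those references. Your closing remark about the isolated-point case is well taken: there the right-hand condition is vacuously true for every $L$, so the stated equivalence (like Theorem~\ref{T: Limit}(ii) itself) implicitly assumes $r$ is a non-trivial cluster point of $X$, which is part of the classical notion of differentiability at $r$.
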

	
\quad \quad Notice that the above characterizations of the concept of \emph{limit} and \emph{derivative} involve \emph{only one quantifier} in sharp contrast with the usual $\varepsilon, \delta$-definition of \emph{limit} in the modern real analysis using three non-commuting quantifiers. On the topic of \emph{counting quantifiers} we refer to Cavalcante~\cite{cavalcante}. For the history of the concepts of \emph{limit and standard part mapping (shadow)} we refer the reader to the recent article Bascelli at al~\cite{Katz at al 2014}.

\begin{example}[Robinson's Asymptotic Numbers] Let $\rho$ be a positive infinitesimal in $^*\mathbb{R}$ (i.e. $0<\rho<1/n$ for all $n\in\mathbb N$). We define the sets of non-standard $\rho$-\emph{moderate} and $\rho$-\emph{negligible} numbers by 
  \begin{eqnarray*}
    \mathcal M_{\rho}(^*\mathbb R)&=&\{x\in{^*\mathbb R} : |x|\le \rho^{-m} \emph{ for some } m\in\mathbb N\},\\
    \mathcal N_{\rho}(^*\mathbb R)&=&\{x\in{^*\mathbb R} : |x| < \rho^n \emph{ for all } n\in\mathbb N\},
  \end{eqnarray*}
  respectively. The \emph{Robinson field of real $\rho$-asymptotic numbers} is the factor ring ${^{\rho}\mathbb R}=:\mathcal M_{\rho}(^*\mathbb R)/\mathcal N_{\rho}(^*\mathbb R)$. We denote by $q: \mathcal M_{\rho}(^*\mathbb R)\to {^\rho\mathbb R}$ the quotient mapping and often simply write $\widehat{x}$ instead of $q(x)$.
\end{example}

	 It is not hard to show that $\mathcal M_{\rho}(^*\mathbb R)$ is a convex subring, and $\mathcal N_{\rho}(^*\mathbb R)$ is a maximal convex ideal; thus $^{\rho}\mathbb R$ is an ordered field. Actually, $^\rho\mathbb{R}$ is a real-closed field (Todorov \& Vernaeve~\cite{TodVern08}, Theorem 7.3) that it is Cantor complete (and thus sequentially complete) but not algebraically $\frak{c}$-saturated.  The field $^{\rho}\mathbb R$ was introduced by A. Robinson in (Robinson~\cite{aRob73}) and in (Lightstone \& Robinson~\cite{LiRob}). $^{\rho}\mathbb R$ is also known as \emph{Robinson's valuation field}, because the mapping $v_\rho:{^{\rho}\mathbb R}\to\mathbb R\cup\{\infty\}$ defined by $v_\rho(\widehat{x})=\st(\log_\rho(|x|))$ if $\widehat{x}\not=0$, and $v_\rho(0)=\infty$, is a non-Archimedean valuation. $^{\rho}\mathbb R$ is also spherically complete (Luxemburg~\cite{wLux76}). We sometimes refer to the branch of mathematics related directly or indirectly to Robinson's field $^\rho\mathbb R$  as \emph{non-standard asymptotic analysis}. For a short survey on the topic we refer to the introduction in Todorov \& Vernaeve~\cite{TodVern08} (see also Oberguggenberger \& Todorov~\cite{OberTod98} and Todorov ~\cite{tdTodAxioms10}-\cite{tdTodLightenings15} , where $^\rho\mathbb C={^\rho}\mathbb R(i)$ appears as the field of scalars of an \emph{algebra of generalized functions of Colombeau type}). 

	We should draw attention to the close similarity (both in the construction and terminology) between the Robinson field $^\rho\mathbb R$ and the Christov quasi-field of asymptotic numbers $\mathcal A$ introduced in Christov~\cite{cyChristov74} (see also Christov \& Todorov\cite{ChrTod84} and Todorov~\cite{tdTod81}).

By a result due to Robinson~\cite{aRob73} the field $\mathbb R\langle t^{\mathbb R}\rangle$ can be embedded as an ordered subfield of ${^\rho\mathbb R}$, where the image of $t$ is $\widehat{\rho}$. We shall write this embedding as an inclusion, $\mathbb R\langle t^{\mathbb R}\rangle\subset{^\rho\mathbb R}$. The latter implies the chain of inclusions (embeddings):            
\begin{equation}\label{E: Chain 2}
   \mathbb R\subset \mathbb R(t)\subset \mathbb R\bra t^{\mathbb Z}\ket\subset \mathbb R\langle t^{\mathbb R}\rangle\subset{^\rho\mathbb R}.
\end{equation}
These embeddings explain the name \emph{asymptotic numbers} for the elements of $^\rho\mathbb R$. Recently it was shown that the fields $^*\mathbb R((t^{\mathbb R}))$ and ${^\rho\mathbb R}$ are ordered field isomorphic  (Todorov \& Wolf~\cite{TodWolf04} ). Since $\mathbb R((t^{\mathbb R}))\subset{^*\mathbb R}((t^{\mathbb R}))$, the chain  (\ref{E: Chain 2}) implies two more chains:
\begin{align}
   &\mathbb R\subset \mathbb R(t)\subset \mathbb R\bra t^{\mathbb Z}\ket\subset \mathbb R\langle t^{\mathbb R}\rangle\subset {\mathbb R((t^{\mathbb R}))}\subset{^\rho\mathbb R}, \label{E: Chain 3}\\
   \label{E: Chain 4}
   &{^*\mathbb R}\subset {^*\mathbb R(t)}\subset{^*\mathbb R\bra t^{\mathbb Z}\ket}\subset {^*\mathbb R}\langle t^{\mathbb R}\rangle\subset {^\rho\mathbb R}.
\end{align} 
We should mention that $^\rho\mathbb R$ is unique up to a field isomorphism (it does not depend on the particular choice of the ultrafilter $\mathcal U$ and the choice of the scale $\rho$) by Theorem~\ref{T: Second Uniqueness Result}.

\section{The Purge of Infinitesimals from Mathematics}\label{S: The Purge of Infinitesimals from Mathematics}

In this section we offer a short survey on the history of infinitesimal calculus written in a \emph{polemic-like style}. The purpose is to refresh the memory of the readers on one hand, and to prepare them for the next section on the other, where we shall claim \emph{the main point of our article}. For a more detailed exposition on the subject we refer to the recent article by Borovik \& Katz~\cite{BorovikKatz}, where the reader will find more references on the subject. For a historical research on the invention of infinitesimal calculus in the context of the Reformation and English Revolution we enthusiastically refer the reader to the excellent recent book by Amir Alexander~\cite{aAlexander2014}. For those readers who are interested in the philosophical roots of the infinitesimal calculus in connection with the Hegel dialectics and logical paradoxes in set theory and quantum mechanics, we refer to the work of Bulgarian philosopher Sava Petrov~\cite{sPetrovArticle}-\cite{sPetrovBook}. 

\begin{itemize}
\item The Infinitesimal calculus was founded as a mathematical discipline by Leibniz and Newton, but the origin of \emph{infinitesimals} can be traced back to Galileo, Cavalieri, Torricelli, Fermat, Pascal and even to Archimedes. The development of calculus culminated in Euler's mathematical inventions. Perhaps Cauchy was the last -- among the great mathematicians -- who still taught calculus (in \'{E}cole) and did research in terms of infinitesimals. We shall refer to this period of analysis as the \emph{Leibniz-Euler Infinitesimal Calculus} for short. 

\item There has hardly ever been a more fruitful and exciting period in mathematics than during the time the Leibniz-Euler infinitesimal calculus was developed. New important results were pouring down from every area of science to which the new method of infinitesimals had been applied -- integration theory, ordinary and partial differential equations, geometry, harmonic analysis, special functions, mechanics, astronomy and physics in general. The mathematicians were highly respected in the science community for having ``in their pockets'' a new powerful method for analyzing everything ``which is changing.'' We might safely characterize the  \emph{Leibniz-Euler Infinitesimal Calculus} as the ``golden age of mathematics.'' We should note that all  of the mathematical achievements of infinitesimal calculus have survived up to modern times. Furthermore, our personal opinion is that the infinitesimal calculus has never encountered logical paradoxes -- such as Russell's paradox in set theory, for example.

\item Regardless of the brilliant success and the lack of (detected) logical paradoxes, doubts about the philosophical and mathematical legitimacy of the foundation of infinitesimal calculus started from the very beginning. The main reason for worry was one of the principles (axioms) -- sometimes called the Leibniz principle -- which claims that there exists a non-Archimedean totally ordered field with very special properties (a non-standard extension of an Archimedean field -- in modern terminology). This principle is not intuitively believable, especially if compared with the axioms of Euclidean geometry. After all, it is much easier to imagine ``points, lines and planes'' around us, rather than to believe that such things like an ``infinitesimal amount of wine'' or ``infinitesimal annual income'' might possibly have counterparts in the real world. The mathematicians of the $17^\textrm{th}$ and $18^\textrm{th}$ centuries  hardly had any experience with non-Archimedean fields -- even the simplest such field $\mathbb Q(t)$ was never seriously considered as a ``field extension'' (in modern terms) of $\mathbb Q$. 

\item About the war waged by the \emph{Jesuit Order} (known also as the \emph{Society of Jesus}) against the infinitesimals and mathematicians who dared to use them in the context of the Reformation and Counter-Reformation, we refer the reader to the recent book by Amir Alexander~\cite{aAlexander2014}.

\item  Looking back with the eyes of a modern mathematician endowed with Robinson's non-standard analysis (Robinson~\cite{aRob66}), we can now see that the Leibniz-Euler calculus was actually quite rigorous -- at least much more rigorous than perceived by many modern mathematicians today and certainly by Weierstrass, Bolzano and Dedekind, who started the \emph{reformation of calculus} in the second part of the $19^\textrm{th}$ century. All axioms (or principles) of the infinitesimal calculus were correctly chosen and eventually survived the test of the modern non-standard analysis invented by A. Robinson in the 1960's. What was missing at the beginning of the $19^\textrm{th}$ century to complete this theory was a proof of the consistency of its axioms; such a proof requires -- from a modern point of view -- only two more things: Zorn's lemma (or equivalently, the axiom of choice) and a construction of a complete totally ordered field from the rationals. 

\item Weierstrass, Bolzano and Dedekind, along with many others, started the \emph{reformation of calculus} by expelling the infinitesimals and replacing them by the concept of the \emph{limit}. Of course, the newly created \emph{real analysis} also requires Zorn's lemma, or the equivalent axiom of choice, but the $19^\textrm{th}$ century mathematicians did not worry about such ``minor details,'' because most of them (with the possible exception of Zermelo) perhaps did not realize that \emph{real analysis} cannot possibly survive without the axiom of choice. The status of Zorn's lemma and the  axiom of choice were clarified a hundred years later by P. Cohen, K. G\"{o}del and others. Dedekind however (along with many others) constructed an example of a complete field, later called the \emph{field of Dedekind cuts}, and thus proved the consistency of the axioms of the real numbers. This was an important step ahead which was missing in the infinitesimal calculus.

	\item The \emph{purge of the infinitesimals} from calculus and from mathematics in general however came at a very high price (paid nowadays by the modern students in real analysis): the number of quantifiers in the definitions and theorems in the new \emph{real analysis} was increased by at least two additional quantifiers when compared to their counterparts in the \emph{infinitesimal calculus}. For example, the definition of a \emph{limit} or \emph{derivative} of a function in the Leibniz-Euler infinitesimal calculus requires only one quantifier (see Theorem~\ref{T: Limit} and Theorem~\ref{T: Derivative}). In contrast, there are three non-commuting quantifiers in their counterparts in real analysis. In the middle of the $19^\textrm{th}$ century however, the word ``infinitesimals'' had become synonymous to ``non-rigorous'' and the mathematicians were ready to pay about any price to get rid of them. 

\item Starting from the beginning of the $20^\textrm{th}$ century \emph{infinitesimals} were systematically expelled from mathematics -- both from textbooks and research papers. The name of the whole discipline \emph{infinitesimal calculus} became archaic and was first modified to \emph{differential calculus}, and later to simply \emph{calculus}, perhaps in an attempt to erase even the slightest remanence of the \emph{realm of infinitesimals}. Even in the historical remarks spread in the modern real analysis textbooks, the authors often indulge in a sort of rewriting of the history by discussing the history of infinitesimal calculus, but not even mentioning the word ``infinitesimal.'' After almost 300 year Jesuit Order's dream -- to expel the infinitesimals from mathematics (Alexander~\cite{aAlexander2014}) -- was finally completely and totally realized: A contemporary student in mathematics might very well graduate from college without ever hearing anything about infinitesimals. Even professional mathematicians (Fields medalists are not excluded)  might have troubles in presenting a simple example of a \emph{field with infinitesimals} (although everyone knows that ''Leibniz used infinitesimals'' and that ``Robinson invented the non-standard analysis'').

\item The \emph{differentials} were also not spared from the purge -- because of their historical connection with infinitesimals. Eventually they were ``saved'' by the differential geometers under a new name: the \emph{total differentials} from infinitesimal calculus were renamed in  modern geometry to \emph{derivatives}. The sociologists of science might take note: it is not unusual in \emph{politics} or other more ideological fields to ``save'' a concept or idea by simply ``renaming it,'' but in mathematics this happens very, very rarely. The last standing reminders of the ``once proud infinitesimals'' in the modern mathematics are perhaps the ``symbols'' $dx$ and $dy$ in the Leibniz notation $dy/dx$ for derivative and in the integral $\int f(x)\, dx$, whose resilience  turned out to be without precedent in mathematics. An innocent and confused student in a modern calculus course, however, might ponder for hours over the question what the deep meaning (if any) of the word ``symbol" is. For a point of view -- somewhat different from ours -- about whether and to what extend the infinitesimals survives the ``purge", we refer the reader to Ehrlich~\cite{pEhrlich}.

\item In the 1960's,  A. Robinson invented non-standard analysis and Leibniz-Euler infinitesimal calculus was completely and totally rehabilitated. The attacks against the infinitesimals finally ceased, but the straightforward hatred toward them remains -- although rarely  expressed openly anymore. (We have reason to believe that the second most hated notion in mathematics after ``infinitesimals'' is perhaps ``asymptotic series,'' but this is a story for another time.) In the minds of many, however, there still remains the lingering suspicion that non-standard analysis is a sort of ``trickery of overly educated logicians'' who -- for lack of anything else to do -- ``only muddy the otherwise crystal-clear waters of modern real analysis.'' 

\item Summarizing the above historical remarks,  our overall impression is -- said figuratively -- that most modern mathematicians perhaps feel much more grateful to Weierstrass, Bolzano and Dedekind, than to Cavalieri, Torricelli, Pascal, Fermat, Leibniz, Newton, L'Hopital and Euler. And many of them perhaps would be happier now if the non-standard analysis had never been invented.
\end{itemize}
\section{How Rigorous Was the Leibniz-Euler Calculus}\label{S: How Rigorous Was the Leibniz-Euler Calculus}

 The Leibniz-Euler infinitesimal calculus was based on the existence of two totally ordered fields -- let us denote them by $\mathbb L$ and $^*\mathbb L$. We shall call $\mathbb L$ the \emph{Leibniz field} and $^*\mathbb L$ its \emph{Leibniz extension}. The identification of these fields has been a question of debate up to present times. What is known with certainty is the following: (a) $\mathbb L$ is an Archimedean field and $^*\mathbb L$ is a non-Archimedean field (in the modern terminology); (b)  ${^*\mathbb L}$ is a proper order field extension of $\mathbb L$; (c) $\mathbb L$ is Leibniz complete (see Axiom 1 below); (d) $\mathbb L$ and $^*\mathbb L$ satisfy the \emph{Leibniz Transfer Principle} (Axiom 2 below). 

\begin{remark}[About the Notation] The set-notation we just used to describe the infinitesimal calculus -- such as $\mathbb L, {^*\mathbb L}$, as well as $\mathbb N, \mathbb Q, \mathbb R$, etc. -- were never used in the $18^\textrm{th}$ century, nor for most of the $19^\textrm{th}$ century. Instead, the elements of $\mathbb L$ were described verbally as the ``usual quantities'' in contrast to the elements of $^*\mathbb L$  which were described in terms of infinitesimals: $dx, dy, 5+dx$, etc.. In spite of that, we shall continue to use the usual set-notation to facilitate the discussion. 
\end{remark}

	One of the purposes of this article is to try to convince the reader that the above assumptions for $\mathbb L$ and $^*\mathbb L$ imply that $\mathbb L$ is a complete field and thus isomorphic to the field of reals, $\mathbb R$. That means that the Leibniz-Euler infinitesimal calculus was already a rigorous branch of mathematics -- at least much more rigorous than many contemporary mathematicians prefer to believe. Our conclusion is that the amazing success of the infinitesimal calculus in science was possible, we argue, \textbf{not in spite of lack of rigor, but because of the high mathematical rigor} already embedded in the formalism. 

	All of this is in sharp contrast to the prevailing perception among many contemporary mathematicians that the Leibniz-Euler infinitesimal calculus was a non-rigorous branch of mathematics. Perhaps, this perception is due to the wrong impression which most modern calculus textbooks create. Here are several popular myths about the level of mathematical rigor of the infinitesimal calculus. 

\begin{myth} 
   Leibniz-Euler calculus was non-rigorous because it was based  on the concept of \emph{non-zero infinitesimals}, rather than on \emph{limits}. The concept of \emph{non-zero infinitesimal} is perhaps ``appealing for the intuition,'' but it is certainly mathematically non-rigorous. ``There is no such thing as a non-zero infinitesimal.'' The \emph{infinitesimals} should be expelled from mathematics once and for all, or perhaps, left only to the applied mathematicians and physicists to facilitate their intuition.
\end{myth}

\begin{fact}
   Yes, the Archimedean fields do not contain non-zero infinitesimals. In particular, $\mathbb R$ does not have non-zero infinitesimals. But in mathematics there are also non-Archimedean ordered fields and each such field contains infinitely many non-zero infinitesimals.  The simplest example of a non-Archimedean field is, perhaps, the field $\mathbb R(t)$ of rational functions with real coefficients supplied with ordering as in Example~\ref{Ex: Field of Rational Functions} in this article. Actually, every totally ordered field which contains a proper copy of $\mathbb R$ is non-Archimedean (see Section~\ref{S: Infinitesimals in Algebra and Non-Standard Analysis}). Blaming the \emph{non-zero infinitesimals} for the lack of rigor is \emph{nothing other than mathematical ignorance}! 
\end{fact}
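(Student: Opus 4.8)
The Fact bundles together four mathematical assertions (the closing clause is rhetorical), and the plan is to dispatch each by reducing it to results already established in the excerpt. The statements that Archimedean fields -- and in particular $\mathbb R$ -- contain no non-zero infinitesimals are immediate from Theorem~\ref{T: Archimedean Property}: for a field $\mathbb K$, being Archimedean is equivalent to $\mathcal I(\mathbb K)=\{0\}$, and $\mathbb R$ is Archimedean. Likewise, the assertion that $\mathbb R(t)$ is a non-Archimedean field is exactly the content of Example~\ref{Ex: Field of Rational Functions}, where $t$ is exhibited as a positive infinitesimal, so nothing further is needed there.

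For the claim that \emph{every} non-Archimedean ordered field contains infinitely many non-zero infinitesimals, I would argue as follows. Let $\mathbb K$ be non-Archimedean; by Theorem~\ref{T: Archimedean Property} there is some $\varepsilon\in\mathcal I(\mathbb K)$ with $\varepsilon\neq 0$. Since $\mathcal I(\mathbb K)$ is an ideal of $\mathcal F(\mathbb K)$ by Theorem~\ref{T: Maximal Ideal} and $\mathbb N\subset\mathcal F(\mathbb K)$, every multiple $n\varepsilon$ with $n\in\mathbb N$ again lies in $\mathcal I(\mathbb K)$; because every ordered field has characteristic zero, the elements $\varepsilon,2\varepsilon,3\varepsilon,\dots$ are pairwise distinct and non-zero. (Equivalently, one may use $\varepsilon/n$, which is infinitesimal by the divisibility noted in the Remark on indivisibles.) This already produces a countably infinite supply, settling that point.

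The last -- and only slightly substantial -- assertion is that any totally ordered field $\mathbb K$ containing a proper copy of $\mathbb R$ is non-Archimedean. The cleanest route is to invoke the maximality of $\mathbb R$ among Archimedean fields: an isomorphic copy of $\mathbb R$ is order-isomorphic to $\mathbb R$, hence Dedekind complete, hence Hilbert complete by Theorem~\ref{T: Completeness of an Archimedean Field}, so it admits no proper Archimedean ordered field extension; since $\mathbb K$ \emph{is} such a proper extension, $\mathbb K$ cannot be Archimedean. Alternatively, and self-containedly, one repeats the argument of the implication $(ix)\Rightarrow(x)$ in Theorem~\ref{T: Completeness of an Archimedean Field}: choosing $a\in\mathbb K\setminus\mathbb R$, if $a$ is infinitely large we are done by Theorem~\ref{T: Archimedean Property}, and otherwise $s=\sup_{\mathbb R}\{r\in\mathbb R : r<a\}$ exists in $\mathbb R$ and $a-s$ is a non-zero infinitesimal, since $|a-s|\ge 1/n$ for some $n\in\mathbb N$ would contradict either that $s$ bounds that set or that it is the least such bound.

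I do not expect a genuine obstacle here, since everything reduces to theorems proved earlier; the one point to handle with care is that ``a proper copy of $\mathbb R$'' must be read as an isomorphic \emph{ordered} subfield, so that the completeness properties of $\mathbb R$ -- which are order-isomorphism invariants -- transfer to it and license the maximality argument. The concluding clause of the Fact is polemical and requires no proof.
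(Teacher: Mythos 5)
Your proposal is correct and follows essentially the route the paper intends: this Fact is stated without a formal proof, its claims being meant to be read off from Theorem~\ref{T: Archimedean Property}, Example~\ref{Ex: Field of Rational Functions}, and the maximality of $\mathbb R$ among Archimedean fields established in Theorem~\ref{T: Completeness of an Archimedean Field}, which is exactly how you proceed. Your two elaborations --- using the ideal property of $\mathcal I(\mathbb K)$ from Theorem~\ref{T: Maximal Ideal} (or divisibility) to produce infinitely many distinct non-zero infinitesimals, and the explicit supremum argument showing that a proper ordered field extension of a copy of $\mathbb R$ must contain a non-zero infinitesimal or an infinite element --- are correct fillings-in of details the paper leaves implicit, including the necessary reading of ``copy of $\mathbb R$'' as an ordered-field-isomorphic copy.
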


\begin{myth}
   The Leibniz-Euler infinitesimal calculus was non-rigorous because of the lack of the completeness of the field of ``ordinary scalars'' $\mathbb L$.  Perhaps $\mathbb L$ should be identified with the field of rationals $\mathbb Q$, or the field $\mathbb A$ of the real algebraic numbers? Those who believe that the  Leibniz-Euler infinitesimal calculus was based on a non-complete field -- such as $\mathbb Q$ or $\mathbb A$ -- must face a very confusing mathematical and philosophical question: How, for God's sake, such a non-rigorous and naive framework as the field of rational numbers $\mathbb Q$ could support one of the most successful developments in the history of mathematics and science in general? Perhaps mathematical rigor is irrelevant to the success of mathematics? Even worse: perhaps mathematical rigor should be treated as an ``obstacle'' or ``barrier'' in the way of the success of science. This point of view is actually pretty common among applied mathematicians and theoretical physicists. We can only hope that those who teach real analysis courses nowadays do not advocate these values in class.
\end{myth}

\begin{fact}
   The answer to the question ``was the Leibniz field $\mathbb L$ complete'' depends on whether or not the Leibniz extension $^*\mathbb L$ can be viewed as a ``non-standard extension'' of $\mathbb L$ in the sense of the modern non-standard analysis. Why? Because the result in Theorem~\ref{T: Completeness of an Archimedean Field} of this article shows that if the Leibniz extension $^*\mathbb L$ of $\mathbb L$ is in fact a non-standard extension of $\mathbb L$, then $\mathbb L$ is a  complete Archimedean field which is thus isomorphic to the field of real numbers $\mathbb R$. On the other hand, there is plenty of evidence that Leibniz and Euler, along with many other mathematicians, had regularly employed the following principle:

   \begin{axiom}[Leibniz Completeness Principle]
      Every finite number in $^*\mathbb L$ is infinitely close to some (necessarily unique) number in $\mathbb L$ (\#2 of Definition~\ref{D: Completeness of a Totally Ordered Field}).
   \end{axiom}
\end{fact}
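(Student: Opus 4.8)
The plan is to read the final statement not as a theorem to be deduced from the earlier results but as the \emph{postulate} it is labelled to be, and then to support it on two fronts. On the mathematical side the only hidden content is the word ``necessarily'': I would first note that uniqueness is automatic as soon as $\mathbb{L}$ is Archimedean (property (a)), since if $r,r'\in\mathbb{L}$ are both infinitely close to a given $x\in\mathcal{F}({}^*\mathbb{L})$ then $r-r'$ is an infinitesimal lying in $\mathbb{L}$, and by Theorem~\ref{T: Archimedean Property} an Archimedean field has $\mathcal{I}(\mathbb{L})=\{0\}$. With uniqueness disposed of, the Axiom becomes \emph{verbatim} the assertion that $\mathbb{L}$ is Leibniz complete in the sense of \#2 of Definition~\ref{D: Completeness of a Totally Ordered Field}, with ${}^*\mathbb{L}$ playing the role of the non-standard extension; no calculation is involved in this reformulation.

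On the historical side --- which is where a ``proof'' of an axiom of this type actually lives --- I would assemble the textual evidence that Leibniz and Euler routinely performed precisely the operation $x\mapsto\st(x)$: Euler's habitual passage from a finite expression containing infinitesimal increments to ``its ordinary value'' by discarding the infinitesimal part, Leibniz's law of continuity together with his description of the ``assignable'' quantity attached to an ``inassignable'' one, and the ubiquitous identification, throughout eighteenth-century computations of limits and derivatives, of a finite infinitely-close pair with a single ordinary number. The output of this step is simply the right to \emph{adopt} the Axiom as a faithful reconstruction of their practice, which is all a historical reconstruction can ask for.

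Finally I would record the payoff, which is then immediate from the machinery already in place: granting the Axiom together with property (d) --- so that ${}^*\mathbb{L}$ is a non-standard extension in the technical sense and Theorem~\ref{T: Completeness of an Archimedean Field} applies --- the equivalence $(ii)\Leftrightarrow(ix)$ of that theorem upgrades Leibniz completeness of $\mathbb{L}$ to Dedekind completeness, whereupon Corollary~\ref{C: ded order iso} forces $\mathbb{L}\cong\mathbb{R}$. The main obstacle is not a calculation but the legitimacy of that last parenthetical hypothesis: the sources of the seventeenth and eighteenth centuries say nothing about free ultrafilters, ultrapowers, or a formal transfer principle, so the hard part is arguing convincingly that the \emph{way} Leibniz and Euler manipulated $\mathbb{L}$ and ${}^*\mathbb{L}$ already encodes enough of the structure of a genuine non-standard extension for the results of Sections~\ref{S: Completeness of an Archimedean Field} and~\ref{S: Infinitesimals in Algebra and Non-Standard Analysis} to bite.
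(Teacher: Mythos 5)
Your reading matches the paper's own: this ``Fact'' is a discursive assertion whose only mathematical content is the chain Leibniz completeness $\Rightarrow$ Dedekind completeness via the equivalence $(ii)\Leftrightarrow(ix)$ of Theorem~\ref{T: Completeness of an Archimedean Field}, followed by Corollary~\ref{C: ded order iso} to get $\mathbb{L}\cong\mathbb{R}$, with the historical evidence carrying the rest. Your additional observation that the uniqueness in the Axiom is automatic from the Archimedean property via Theorem~\ref{T: Archimedean Property} is correct and consistent with the paper.
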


\begin{remark} The above property of $^*\mathbb L$ was treated by Leibniz and the others as an ``obvious truth.'' More likely, the $18^\textrm{th}$ century mathematicians were unable to imagine a counter-example to the above statement. The results of non-standard analysis produce such a counter-example: there exist finite numbers in $^*\mathbb Q$ which are \emph{not} infinitely close to any number in $\mathbb Q$.
\end{remark}

\begin{myth}
   The theory of non-standard analysis is an invention of the $20^\textrm{th}$ century and has nothing to do with the Leibniz-Euler infinitesimal calculus. We should not try to rewrite the history and project backwards the achievements of modern mathematics. The proponents of this point of view also emphasize the following features of non-standard analysis:
   \begin{R-enum}
   \item A. Robinson's original version of non-standard analysis was based of the so-called \emph{Compactness Theorem} from model theory: If a set $S$ of sentences has the property that every finite subset of $S$ is consistent (has a model), then the whole set $S$ is consistent (has a model). 

   \item The ultrapower construction of the non-standard extension $^*\mathbb K$ of a field $\mathbb K$ used in Example 5, Section \ref{S: Infinitesimals in Algebra and Non-Standard Analysis}, is based on the existence of a free ultrafilter. Nowadays we prove the existence of such a filter with the help of Zorn's lemma.  Actually the statement that \emph{for every infinite set $I$ there exists a free ultrafilter on $I$} is known in modern set theory  as the \emph{free filter axiom} (an axiom which is weaker than the axiom of choice).
   \end{R-enum} 
\end{myth}

\begin{fact}
   We completely and totally agree with both (1) and (2) above. Neither the \emph{completeness theorem} from model theory, nor the \emph{free filter axiom} can be recognized  in any shape or form in the Leibniz-Euler infinitesimal calculus.  These inventions belong to the late $19^\textrm{th}$ and the first half of $20^\textrm{th}$ century. Perhaps surprisingly for many of us, however, J. Keisler~\cite{jKeislerF} invented a simplified version of non-standard analysis -- general enough to support calculus -- which rely on neither model theory or formal mathematical logic. It presents the definition of $^*\mathbb R$ axiomatically in terms of a particular extension of all functions from $\mathbb L$ to $^*\mathbb L$ satisfying the so-called \emph{Leibniz Transfer Principle}:

   \begin{axiom}[Leibniz Transfer Principle]\label{A: Leibniz Transfer Principle} 
      For every $d\in\mathbb N$ and for every set $S\subset\mathbb L^d$ there exists a unique set $^*S\subset{^*\mathbb L^d}$ such that:
      \begin{R-enum}
      \item $^*S \cap \mathbb L^d=S$.
      \item If $f\subset \mathbb L^p\times \mathbb L^q$ is a function, then $^*f\subset{^*\mathbb L^p}\times{^*\mathbb L^q}$ is also a function. 
      \item $\mathbb L$ satisfies Theorem~\ref{T: Keisler Transfer Principle} for $\mathbb K=\mathbb L$. 
      \end{R-enum}
   \end{axiom}

   We shall call $^*S$ a \emph{non-standard extension} of $S$ borrowing the terminology from Example 5, Section~\ref{S: Infinitesimals in Algebra and Non-Standard Analysis}. 
\end{fact}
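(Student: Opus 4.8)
The plan is to read this Fact correctly: its historical assertions---that the completeness theorem of model theory and the free filter axiom are nineteenth- and twentieth-century inventions with no counterpart in the Leibniz--Euler calculus, and that Keisler produced a logic-free axiomatization of $^*\mathbb R$---are matters of record, not objects of proof, and the displayed \emph{Leibniz Transfer Principle} is posited as an \emph{axiom}, which carries no proof obligation of its own. What does deserve mathematical justification, and what I would supply, is the \emph{consistency} of that axiom: a pair $(\mathbb L, {}^*\mathbb L)$ together with an assignment $S \mapsto {}^*S$ satisfying clauses (i)--(iii), and in particular realizing the asserted \emph{existence and uniqueness} of $^*S$, must actually exist; otherwise adopting the principle would be vacuous and the paper's reconstruction of the infinitesimal calculus would rest on nothing.

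To establish consistency I would exhibit an explicit model, taking $\mathbb L = \mathbb R$ and letting $^*\mathbb R = \mathbb R^{\mathbb N}/{\sim}$ be the ultrapower of Definition~\ref{D: Non-Standard Analysis}, with $^*S$ and $^*f$ defined there. Clause (i), $^*S \cap \mathbb R^d = S$, follows from the constant-sequence embedding $r \mapsto \bra r, r, \dots\ket$ together with the property that $\bra a_n\ket$ lies in $^*S$ precisely when $\{n : a_n \in S\} \in \mathcal U$; for a constant sequence this index set is either all of $\mathbb N$ or empty, so membership reduces to $r \in S$. Clause (ii), that the extension of a function is again a function, is the well-definedness of $^*f(\bra x_n\ket) = \bra f(x_n)\ket$ on $\sim$-classes, which is immediate once $\mathcal U$ is closed under finite intersection. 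Clause (iii) is exactly the content of Theorem~\ref{T: Keisler Transfer Principle} for $\mathbb K = \mathbb R$, which the excerpt already grants, so no new work is needed there.

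The delicate clause---and the step I expect to be the main obstacle---is the \emph{uniqueness} of $^*S$ demanded by the axiom, because an arbitrary $S \subseteq \mathbb R^d$ is not presented as the solution set of finitely many equations and inequalities, so Theorem~\ref{T: Keisler Transfer Principle} cannot be applied to it directly. The fix I would use is to code membership through the characteristic function $\chi_S : \mathbb R^d \to \mathbb R$, which equals $1$ on $S$ and $0$ elsewhere: then $S$ \emph{is} the solution set of the single equation $\chi_S(x) = 1$, so Theorem~\ref{T: Keisler Transfer Principle} forces $^*S = \{x \in {}^*\mathbb R^d : {}^*\chi_S(x) = 1\}$ for \emph{any} admissible extension. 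It remains to see that $^*\chi_S$ is itself pinned down by clauses (i)--(iii); this I would settle by the explicit ultrapower formula $^*\chi_S(\bra x_n\ket) = \bra \chi_S(x_n)\ket$, observing that any function extension obeying the axioms must agree with it pointwise on $^*\mathbb R^d$. With $^*\chi_S$ unique, $^*S$ is unique, and the ultrapower witnesses both the existence and the uniqueness asserted by the axiom, confirming that the Leibniz Transfer Principle is consistent and hence a legitimate foundation on which the remainder of the argument may stand.
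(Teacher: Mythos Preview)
The paper offers no proof of this Fact at all: it sits in the polemic Section~8, and its content is purely expository---it concedes the historical points of Myth~3, cites Keisler's axiomatic treatment, and then \emph{states} Axiom~2 as an axiom. Nothing is derived; the paper simply moves on to examples of how the transfer principle is used. So there is no ``paper's own proof'' against which to compare your proposal; you have elected to supply a consistency argument that the authors neither give nor claim to need.

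That said, your consistency check via the ultrapower is sound in outline but tangled in one place. You correctly note that the ultrapower of Definition~\ref{D: Non-Standard Analysis} furnishes $^*\mathbb R$, the map $S\mapsto{^*S}$, and clauses (i)--(iii). Where you go astray is in treating the word ``unique'' in the axiom as an assertion that properties (i)--(iii) \emph{determine} $^*S$ from $S$, and then trying to recover $^*S$ from $^*\chi_S$. That argument is circular: $\chi_S$ is itself a subset of $\mathbb L^d\times\mathbb L$, so pinning down $^*\chi_S$ is the same problem you started with, and your appeal to ``the explicit ultrapower formula'' only settles the matter inside the one model you already chose, not across all admissible extensions. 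The intended reading of ``unique'' is simply that $S\mapsto{^*S}$ is a well-defined map---each $S$ has exactly one image---which in the ultrapower holds by fiat, since $^*S$ is \emph{defined} by the formula $\{\langle a_n\rangle:\{n:a_n\in S\}\in\mathcal U\}$. No characteristic-function detour is needed, and the obstacle you anticipated does not arise.
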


\begin{examples} Here are two (typical) examples for the application of the Leibniz transfer principle:
   \begin{Ex-enum}
\item $(x+y)^3=x^3+3x^2y+3xy^2+y^3$ holds for all $x, y\in{\mathbb L}$ if and only if this  identity holds for all $x, y\in{^*\mathbb L}$.
   \item The identity $\sin(x+y)={\sin x}\, {\cos y}+{\cos x}\, {\sin y}$ holds for all $x, y\in{\mathbb L}$ if and only if this  identity holds for all $x, y\in{^*\mathbb L}$ (where the asterisks in front of the sine and cosine are skipped for simplicity). 
   \end{Ex-enum}
\end{examples} 

 Leibniz never formulated his principle exactly in the form presented above. For one thing, the set-notation such as $\mathbb N$, $\mathbb Q$, $\mathbb R$, $\mathbb L$, etc. was not in use in the $18^\textrm{th}$ century. The name ``Leibniz Principle'' is often used in the modern literature (see Keisler~\cite{jKeislerF}, p. 42 or Stroyan \& Luxemburg~\cite{StroyanLux76}, p. 28), because Leibniz suggested that the field of the usual numbers ($\mathbb L$ or $\mathbb R$ here) should be extended to a larger system of numbers ($^*\mathbb L$ or $^*\mathbb R$), which has the \emph{same properties, but contains infinitesimals}. Both Axiom 1 and Axiom 2 however, were in active use in Leibniz-Euler infinitesimal calculus.  Their implementation does not require an elaborate set theory or formal logic; \emph{what is a solution of a system of equations and inequalities} was perfectly clear to mathematicians long before of the times of Leibniz and Euler. Both Axiom 1 and Axiom 2 are theorems in modern non-standard analysis (Keisler~\cite{jKeislerF}, p. 42). However, if Axiom 1 and Axiom 2 are understood as \emph{axioms}, they characterize the field $\mathbb L$ uniquely (up to a field isomorphism) as a complete Archimedean field (thus a field isomorphic to $\mathbb R$). Also, these axioms characterize $^*\mathbb L$ as a \emph{non-standard extension} of $\mathbb L$. True, these two axioms do not determine $^*\mathbb L$ uniquely (up to a field isomorphism) unless we borrow from the modern set theory such tools as \emph{cardinality} and the \emph{GCH}, but for the rigorousness of the infinitesimal this does not matter. Here is an example how the formula $(x^3)^\prime= 3x^2$ was derived in the infinitesimal calculus: suppose that $x\in \mathbb L$. Then for every non-zero infinitesimal $dx\in{^*\mathbb L}$ we have 
\begin{align}
&\frac{(x+dx)^3-x^3}{dx}= \frac{x^3+3x^2\, dx+3x\, dx^2+dx^3-x^3}{dx}=\notag\\
&=\frac{(3x^2+3x\, dx+dx^2)\, dx}{dx}=3x^2+3x\, dx+dx^2\approx 3x^2,\notag
\end{align}
because $3x\, dx+dx^2\approx 0$. Here $\approx$ stands for the infinitesimal relation on $^*\mathbb L$, i.e. $x\approx y$ if $x-y$ is infinitesimal (Definition~\ref{D: Infinitesimals, etc.}). Thus $(x^3)^\prime= 3x^2$ by the Leibniz definition of derivative (Theorem~\ref{T: Derivative}). For those who are interested in teaching calculus through infinitesimals we refer to the calculus textbook Keisler~\cite{jKeislerE} and its more advanced companion Keisler~\cite{jKeislerF} (both available on internet). On a method of teaching limits through infinitesimals we refer to Todorov~\cite{tdTod2000a}.

\begin{myth}
   Trigonometry and the theory of algebraic and transcendental elementary functions such as $\sin x, e^x$, etc. was not rigorous in the infinitesimal calculus. After all, the theory of analytic functions was not developed until late in $19^\textrm{th}$ century.   
\end{myth}

\begin{fact}
   Again, we argue that the rigor of trigonometry and the elementary functions was relatively high and certainly much higher than in most of the contemporary trigonometry and calculus textbooks. In particular, $y=\sin x$ was defined by first, defining $\sin^{-1} y=\int_0^x\frac{dy}{\sqrt{1-y^2}}$ on $[-1, 1]$ and geometrically viewed as a particular \emph{arc-length on a circle}. Then $\sin x$ on $[-\pi/2, \pi/2]$ is defined as the inverse of $\sin^{-1} y$. If needed, the result can be extended to $\mathbb L$ (or to $\mathbb R$) by periodicity. This answer leads to another question: how was the \emph{concept of arc-length} and the integral $\int_a^b f(x)\, dx$ defined in terms of infinitesimals before the advent of Riemann's theory of integration? On this topic we refer the curious reader to Cavalcante \& Todorov~\cite{CavTod08}.
\end{fact}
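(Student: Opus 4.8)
The plan is to reconstruct, with modern standards of rigor, the eighteenth-century route to the trigonometric (and the other elementary) functions and then to carry the whole theory over to ${^*\mathbb L}$ by the Leibniz Transfer Principle (Axiom~\ref{A: Leibniz Transfer Principle}).

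First I would fix the arc-length integral
\[
A(y)=:\int_0^y\frac{dt}{\sqrt{1-t^2}},\qquad y\in[-1,1],
\]
which is improper at $y=\pm 1$; its convergence follows by comparison with $\int_0^1 dt/\sqrt{1-t}$, so that $A$ is a well-defined, continuous, odd, strictly increasing function on $[-1,1]$ with $A(0)=0$. One then \emph{defines} $\pi$ by $\pi=:2A(1)$, so that $A:[-1,1]\to[-\pi/2,\pi/2]$ is an order isomorphism. Next I would put $\sin=:A^{-1}$ on $[-\pi/2,\pi/2]$ and $\cos x=:\sqrt{1-\sin^2 x}\ge 0$ there, and then extend both functions to $\mathbb L$ (or to $\mathbb R$) by the reflection identities $\sin(\pi-x)=\sin x$, $\cos(\pi-x)=-\cos x$ together with $2\pi$-periodicity; a short argument shows that the extension stays differentiable at the junction points $\pm\pi/2,\pm 3\pi/2,\dots$

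The analytic properties then fall out of the infinitesimal machinery already in hand. Applying the characterization of the derivative in Theorem~\ref{T: Derivative} to $A$, and using the fact (itself a one-line consequence of that characterization) that the derivative of an inverse function is the reciprocal of the derivative, one gets $A'(y)=1/\sqrt{1-y^2}$, hence $\sin'x=1/A'(\sin x)=\cos x$ and $\cos'x=-\sin x$; equivalently $\bigl({^*\sin}(x+dx)-\sin x\bigr)/dx\approx\cos x$ for every non-zero infinitesimal $dx\in{^*\mathbb L}$. The addition formula $\sin(x+y)=\sin x\cos y+\cos x\sin y$ I would get either from additivity of arc-length along the unit circle or, more economically, by observing that $\sin$ and $\cos$ both satisfy $u''+u=0$ and invoking uniqueness for this linear equation. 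Finally, applying the Leibniz Transfer Principle (Axiom~\ref{A: Leibniz Transfer Principle}, via Theorem~\ref{T: Keisler Transfer Principle}) to each of these polynomial and functional identities, one concludes that $^*\sin,{^*\cos}:{^*\mathbb L}\to{^*\mathbb L}$ satisfy exactly the same relations for all arguments in ${^*\mathbb L}$ -- which is precisely the sense in which the elementary function theory was already rigorous in the Leibniz-Euler calculus.

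The one genuinely substantial ingredient, and the step I expect to be the main obstacle, is a rigorous \emph{infinitesimal} definition of the definite integral $\int_a^b f(x)\,dx$, and hence of arc-length $\int\sqrt{1+f'(x)^2}\,dx$, predating Riemann's theory; this is exactly what the eighteenth-century calculus treated as self-evident, and it is worked out carefully in Cavalcante \& Todorov~\cite{CavTod08}. A secondary technical point is that $A$ is improper at $\pm 1$, so one must separately check that $\sin$ extends continuously and differentiably to $\pm\pi/2$ with $\cos(\pm\pi/2)=0$ (using Theorem~\ref{T: Limit}); once the integral is placed on a firm footing, both of these are routine.
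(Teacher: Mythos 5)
Your proposal is correct and follows essentially the same route as the paper's own justification of this Fact: define $\sin^{-1}$ as the arc-length integral $\int_0^y dt/\sqrt{1-t^2}$, invert it on $[-\pi/2,\pi/2]$, extend by periodicity, and defer the genuinely hard point --- a rigorous infinitesimal definition of $\int_a^b f(x)\,dx$ and of arc-length predating Riemann --- to Cavalcante \& Todorov~\cite{CavTod08}. The additional material you supply (convergence of the improper integral, the definition of $\pi$, the derivative and addition formulas, and the passage to ${^*\mathbb L}$ via the Leibniz Transfer Principle) is a faithful elaboration of what the paper leaves implicit, and you correctly identify the integral as the one substantial ingredient rather than a routine step.
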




\end{document}